\newcommand{\abs}[1]{\left| #1 \right|}
\newcommand{\C}{\mathbb{C}}
\newcommand{\COpw}[1]{\textrm{Op}^w_{h, \Phi}\left(#1\right)}
\newcommand{\FBI}{\mathcal{T}_\varphi}
\newcommand{\N}{\mathbb{N}}
\newcommand{\norm}[1]{\|{#1}\|}
\newcommand{\p}{\partial}
\newcommand{\R}{\mathbb{R}}
\newcommand{\set}[2]{\left\{{#1}:{#2}\right\}}
\newcommand{\supp}[1]{\textrm{supp}\left({#1}\right)}
\newcommand{\Opw}[1]{\textrm{Op}^w_h\left({#1}\right)}
\theoremstyle{plain}
\newtheorem{theorem}[subsection]{Theorem}
\newtheorem{proposition}[subsection]{Proposition}
\newtheorem{corollary}[subsection]{Corollary}
\theoremstyle{definition}
\newtheorem{example}{Example}
\newtheorem{remark}{Remark}
\theoremstyle{conjecture}
\theoremstyle{exercise}
\numberwithin{equation}{section}
\title{$L^p$-bounds for eigenfunctions of analytic non-self-adjoint operators with double characteristics}
\author{Francis White}
\affil{University of California Los Angeles}
\date{}
\begin{document}

\maketitle

\begin{abstract}
    We prove sharp uniform $L^p$-bounds for low-lying eigenfunctions of non-self-adjoint semiclassical pseudodifferential operators $P$ on $\R^{n}$ whose principal symbols are doubly-characteristic at the origin of $\R^{2n}$. Our bounds hold under two main assumptions on $P$: (1) the total symbol of $P$ extends holomorphically to a tubular neighborhood of $\R^{2n}$ in $\C^{2n}$, and (2) the quadratic approximation to the principal symbol of $P$ at the origin is elliptic along its singular space. Most notably, our assumptions on the quadratic approximation are less restrictive than those made in prior works, and our main theorem improves the already known results in the case when the symbol of $P$ is analytic.
\end{abstract}


\tableofcontents

\section{Introduction and Statement of Results}

This paper is inspired by the recent progress \cite{Eigenfunction_Bounds} in the understanding of $L^p$-norms of low-lying eigenfunctions of semiclassical pseudodifferential operators with double characteristics. More precisely, we are interested in eigenfunctions of semiclassical pseudodifferential operators on $\R^n$ of the form
\begin{align} \label{operators_we_consider}
	P = \Opw{p_0+hp_1}, \ \ 0 < h \le 1,
\end{align}
where $p_0, p_1 \in C^\infty(\R^{2n})$ belong to a symbol class that we shall specify shortly. Here $0 < h \le 1$ is a semiclassical parameter, and $\Opw{a}$ denotes the semiclassical Weyl quantization of a symbol $a$ on $\R^{2n}$, which is defined formally by
\begin{align}
	\Opw{a} u(x) = \frac{1}{(2\pi h)^n} \int_{\R^n} \int_{\R^n} e^{\frac{i}{h}(x-y) \cdot \xi} a \left(\frac{x+y}{2}, \xi; h \right) u(y) \, dy \, d\xi, \ \ x \in \R^n,
\end{align}
for $u \in \mathcal{S}'(\R^n)$. For background concerning semiclassical Weyl quantization on $\R^n$, we refer to the texts \cite{SemiclassicalAnalysis} and \cite{dimassi_sjostrand}. We make the following assumptions regarding the leading symbol $p_0$:
\begin{enumerate}
	\item $p_0$ is $h$-independent,
	\item $p_0$ has non-negative real part,
	\begin{align} \label{non-negativity of the real part of leading symbol}
		\textrm{Re} \, p_0 \ge 0 \ \textrm{on} \ \R^{2n},
	\end{align}
	with
	\begin{align} \label{null set of the real part is 0}
		(\textrm{Re} \, p_0)^{-1}(0) = \{0\},
	\end{align}
	and
	\item $\textrm{Im} \, p_0$ vanishes to second order at $0 \in \R^{2n}$, i.e.
	\begin{align} \label{imaginary part of principal symbol vanishes to second order}
		\textrm{Im} \, p_0(0) = 0 \ \textrm{and} \ \nabla{(\textrm{Im} \, p_0)}(0) = 0.
	\end{align}
\end{enumerate}
Note that (\ref{non-negativity of the real part of leading symbol}) and (\ref{null set of the real part is 0}) imply that
\begin{align}
	\nabla(\textrm{Re} \, p_0)(0) = 0,
\end{align}
and hence $p_0$ vanishes to second order at $0 \in \R^{2n}$,
\begin{align} \label{doubly characteristic at 0}
	p_0(0) = 0 \ \textrm{and} \ \nabla p_0(0) = 0.
\end{align}
We summarize (\ref{doubly characteristic at 0}) by saying that $p_0$ is \emph{doubly characteristic} at $0 \in \R^{2n}$. To state our assumptions concerning the growth of $p_0$ and $p_1$ at infinity, we first recall that an {\emph{order function}} on $\R^{2n}$ is a Lebesgue measurable function $m: \R^{2n} \rightarrow (0,\infty)$ such that
\begin{align}
	\exists C>0, \ \exists N>0: \ m(X) \le C \langle X - Y \rangle^{N} m(Y), \ \ X, Y \in \R^{2n}.
\end{align}
Here $\langle X \rangle = (1+\abs{X}^2)^{1/2}$ denotes the Japanese bracket of $X \in \R^{2n}$. For any order function $m$ on $\R^{2n}$, we have the symbol class $S(m)$ consisting of all $a: \R^{2n} \times (0,1] \rightarrow \C$ such that
\begin{align}
	a(\cdot; h) \in C^\infty(\R^{2n}), \ \ 0<h \le 1,
\end{align}
and
\begin{align}
\forall \alpha \in \N^{2n}, \, \exists C>0: \ \abs{\p^\alpha_X a(X;h)} \le Cm(X), \ \ X \in \R^{2n}, \ \ 0< h \le 1.
\end{align}
Regarding the symbols $p_0$ and $p_1$, we assume that there is an order function $m$ on $\R^{2n}$ with
\begin{align} \label{conditions on order function}
	 m \in S(m) \ \textrm{and} \ m \ge 1 
\end{align}
such that
\begin{align}
	p_0, p_1 \in S(m).
\end{align}
We also assume that $\textrm{Re} \, p_0$ is elliptic at infinity in the sense that
\begin{align} \label{real part is elliptic at infinity}
	\textrm{Re} \, p_0(X) \ge \frac{1}{C} m(X), \ \ \abs{X} \ge C,
\end{align}
for some $C>0$.

Operators of the form (\ref{operators_we_consider}) with $p_0$ and $p_1$ satisfying the above assumptions include, for example, Schr\"{o}dinger operators on $\R^n$ with complex potentials. Other examples of such semiclassical operators may be found in areas of mathematical physics such as fluid dynamics, superconductivity, and kinetic theory. See, for instance, \cite{SpectralAnalysis}, \cite{Davies}, and \cite{WitLap}. In many applications, the operator $P$ arises as an unbounded operator $L^2(\R^n)$, and one is interested in determining the spectrum of $P$ in the semiclassical limit as $h \rightarrow 0^+$. We may realize $P$ as an unbounded operator on $L^2(\R^n)$ with the domain
\begin{align} \label{the domain}
	H_h(m) := \textrm{Op}^w_h(m)^{-1} \left(L^2(\R^n) \right),
\end{align}
for $h>0$ sufficiently small. When equipped with the domain (\ref{the domain}), the operator $P$ becomes a closed and densely defined operator on $L^2(\R^n)$. Concerning the spectrum of $P$, the assumption (\ref{real part is elliptic at infinity}) that $\textrm{Re} \, p_0$ is elliptic at infinity implies that there is $0<h_0 \le 1$ and $C>0$ such that 
\begin{align} \label{discreteness of low-lying spectrum}
\begin{split}
&\textrm{Spec}(P) \cap D(0,C) \ \textrm{is discrete and consists of eigenvalues} \\ 
&\textrm{of finite algebraic multiplicity for every} \ 0< h \le h_0.
\end{split}
\end{align}
Here $D(0,r)$ denotes the open disc of radius $r$ in $r$ centered at the origin $0$. For a proof of (\ref{discreteness of low-lying spectrum}), see Section 3 of \cite{Hager_Sjostrand}. The eigenvalues $\lambda(h)$ of $P$ that lie in a disc of the form $D(0,Ch)$ for some $C>0$ are known as \emph{low-lying eigenvalues} of $P$. Thanks to the works \cite{ParaMultiChar}, \cite{BoutetHypo}, \cite{Stolk_Herau_Sjostrand}, and \cite{EigenvaluesAndSubelliptic}, complete asymptotic expansions for the low-lying eigenvalues of $P$ are known when Weyl symbol of $P$ admits an asymptotic expansion in the class $S(m)$, i.e. there exists a sequence of $h$-independent symbols $p_{1,j} \in S(m)$, $j \in \N$, such that
\begin{align}
	p_1 \sim \sum_{j=0}^\infty h^j p_{1,j} \ \textrm{in} \ S(m),
\end{align}
and the quadratic approximation to $p_0$ at $0 \in \R^{2n}$,
\begin{align} \label{quadratic_approx_at_0}
	q(X) = \frac{1}{2} p''_0(0) X \cdot X, \ \ X \in \R^{2n},
\end{align}
satisfies suitable partial ellipticity hypotheses. Note that the sign assumption (\ref{non-negativity of the real part of leading symbol}) implies that the complex-valued quadratic form $q$ defined by (\ref{quadratic_approx_at_0}) has non-negative real part,
\begin{align} 
	\textrm{Re} \, q \ge 0.
\end{align}

In this paper, we are primarily interested in the eigenfunctions that correspond to low-lying eigenvalues of $P$. A \emph{low-lying eigenfunction of $P$} is a family $u = u(h) \in L^2(\R^n)$, $0<h \le 1$, such that
\begin{align}
	\begin{cases}
		P u(h) = z(h) u(h), \\
		\norm{u(h)}_{L^2(\R^n)} = 1,
	\end{cases}
\end{align}
where $z(h) \in \C$ satisfies
\begin{align}
	z(h) = \mathcal{O}(h), \ \ h \rightarrow 0^+.
\end{align}
When the operator $P$ is non-self-adjoint, little is known about the low-lying eigenfunctions of $P$. In particular, relationship between the quadratic form $q$ and properties of the low-lying eigenfunctions of $P$ is not well-understood. In this paper, our objective shall be to establish $L^p$-bounds for low-lying eigenfunctions of $P$ under minimal partial ellipticity assumptions on $q$.

\begin{remark}
We remark the study of $L^p$-bounds of eigenfunctions of self-adjoint semiclassical pseudodifferential operators is well-established and has a long history. For more information, we refer to the works \cite{Koch_Tataru_Zworski}, \cite{Koch_Tataru}, and the references therein.
\end{remark}

To the best of our knowledge, the first and only work to undertake a study of $L^p$-bounds for low-lying eigenfunctions of non-self-adjoint semiclassical pseudodifferential operators of the above type has been the work \cite{Eigenfunction_Bounds}, which showed, under the assumption that $\textrm{Re} \, q$ is positive-definite, that any low-lying eigenfunction $u$ of $P$ satisfies the bound
\begin{align} \label{eigenfunction_bound_Krupchyk_Uhlmann}
	\norm{u(h)}_{L^p(\R^n)} \le \mathcal{O}(1)h^{\frac{n}{2p}-\frac{n}{4}}, \, \ \ h \rightarrow 0^+,
\end{align}
for $p$ in the range $2 \le p \le \infty$. In particular, the bound (\ref{eigenfunction_bound_Krupchyk_Uhlmann}) is saturated by the eigenfunctions of the semiclassical harmonic oscillator on $\R^n$, and thus the bounds (\ref{eigenfunction_bound_Krupchyk_Uhlmann}) are sharp within the class of operators considered in \cite{Eigenfunction_Bounds}. 

In the present work, we will show that the bounds (\ref{eigenfunction_bound_Krupchyk_Uhlmann}) also hold in cases where $\textrm{Re} \, q$ need not be positive-definite, provided  we assume in addition that $p_0$ and $p_1$ admit suitable holomorphic extensions to a tubular neighborhood of $\R^{2n}$ in $\C^{2n}$. This kind of result is of interest because in many physical applications the real quadratic form $\textrm{Re} \, q$ does indeed fail to be positive-definite. Such a situation arises in, for example, the study of operators Kramers-Fokker-Planck type in kinetic theory. For more information, see e.g. \cite{Stolk_Herau_Sjostrand}. Actually, our main result will be somewhat stronger than what we have just described. Namely, we will show that the low-lying eigenfunctions of semiclassical operators of the kind we consider satisfy (\ref{eigenfunction_bound_Krupchyk_Uhlmann}) for $p$ in the entire range $1 \le p \le \infty$. Thus our main result improves that of \cite{Eigenfunction_Bounds} in the case when $\textrm{Re} \, q$ is positive-definite and $p_0$ and $p_1$ belong to a suitable holomorphic symbol class.

In the work \cite{EigenvaluesAndSubelliptic}, asymptotic expansions for the low-lying eigenvalues of $P$ were established under the assumption that the quadratic approximation $q$ to $p_0$ at $0 \in \R^{2n}$ is elliptic only along a certain, distinguished, subspace of $\R^{2n}$, known as the \emph{singular space} of $q$. In the present work, we aim to establish $L^p$-bounds for low-lying eigenfunctions of $P$ under this very same partial ellipticity assumption on $q$. In order to state our main result, we pause to recall the notion of the singular space of a complex-valued quadratic form $q$ on $\R^{2n}$ with non-negative real part $\textrm{Re} \, q \ge 0$.

Let $\R^{2n}$ be equipped with the standard symplectic form
\begin{align} \label{Hamilton matrix of q}
    \sigma((x,\xi), (y,\eta)) = \xi \cdot y - x \cdot \eta, \ \ \ (x,\xi), \ (y,\eta) \in \R^{2n}.
\end{align}
Suppose $q: \R^{2n} \rightarrow \C$ is a complex-valued quadratic form with non-negative real part $\textrm{Re} \, q \ge 0$. Let $q(\cdot, \cdot)$ denote the symmetric $\C$-bilinear polarization of $q$. Because $\sigma$ is non-degenerate, there is a unique $F \in \textrm{Mat}_{2n \times 2n}(\C)$ such that
\begin{align*}
    q((x,\xi), (y,\eta)) = \sigma((x,\xi), F(y,\eta)), \ \ (x,\xi), (y,\eta) \in \R^{2n}.
\end{align*}
This matrix $F$ is called the \emph{Hamilton map} or \emph{Hamilton matrix of $q$} (see Section 21.5 of \cite{HormanderIII}). The \emph{singular space $S$ of $q$} is defined as the following finite intersection of kernels:
\begin{align} \label{definition of singular space}
    S = \left(\bigcap_{j=0}^{2n-1} \ker{\left[(\textrm{Re} \ F)(\textrm{Im} \ F)^j\right]} \right) \cap \R^{2n}.
\end{align}
The singular space was first introduced by M. Hitrik and K. Pravda-Starov in \cite{QuadraticOperators} where it arose naturally in the study of spectra and semi-group smoothing properties for non-self adjoint quadratic differential operators. The concept of the singular space has since been shown to play a key role in the understanding of hypoelliptic and spectral properties of non-elliptic quadratic differential operators. See for instance \cite{SemiclassicalHypoelliptic}, \cite{EigenvaluesAndSubelliptic}, \cite{ContractionSemigroup}, \cite{SubellipticEstimatesQuadraticDifferentialOperators}, \cite{NonEllipticQuadraticFormsandSemiclassicalEstimates}, and \cite{SpectralProjectionsAndResolventBounds}. The notion of the singular space is also crucial for the description of the propagation of global microlocal singularities for Schr\"{o}dinger equations on $\R^n$ with quadratic Hamiltonians. The interested reader may consult \cite{GaborSingularities}, \cite{PolynomialSingularities}, \cite{time_dependent}, \cite{ExponentialSingularities}, and \cite{Global_Analytic}, as well as \cite{PartialGS}, \cite{alphonse2020polar}, and \cite{Lp_bounds}.

Next, we recall the definition of the holomorphic symbol class $S_{\textrm{Hol}}(m)$. If $m$ is an order function on $\R^{2n}$ and $W$ is a bounded open neighborhood of $0 \in \C^{2n}$, we may extend $m$ to a function $\tilde{m}$ on the tubular neighborhood $\R^{2n} + W$ of $\R^{2n}$ in $\C^{2n}$ by setting
\begin{align}
	\tilde{m}(Z) = m(\textrm{Re}(Z)), \ \ Z \in \R^{2n} + W.
\end{align}
We define $S_{\textrm{Hol}}(m)$ as the set of all $a: \R^{2n} \times (0,1] \rightarrow \C$ for which there is a bounded open neighborhood $W$ of $0$ in $\C^{2n}$ and a function $\tilde{a}: (\R^{2n}+W) \times (0,1] \rightarrow \C$ extending $a$ such that
\begin{align}
	\tilde{a}(\cdot; h) \in \textrm{Hol}(\R^{2n} + W), \ \ 0< h \le 1,
\end{align}
and
\begin{align}
	\exists C>0: \ \abs{\tilde{a}(Z;h)} \le C \tilde{m}(Z), \ \ Z \in \R^{2n}+W, \ 0<h\le 1.
\end{align}
Note that by Cauchy's inequalities, the derivatives of symbols in $S_{\textrm{Hol}}(m)$ are controlled by $\tilde{m}$ in any strictly smaller tubular neighborhood of $\R^{2n}$ as well: if $a \in S_{\textrm{Hol}}(m)$, then for any open $\tilde{W} \subset \subset W$ we have
\begin{align}
	\forall \alpha \in \N^{2n}, \ \exists C = C(\alpha)>0: \ \abs{\p^\alpha_Z \tilde{a}(Z;h)} \le C \tilde{m}(Z), \ \ Z \in \R^{2n} + \tilde{W}, \ \ 0<h \le 1,
\end{align}
where $\tilde{a}$ is the holomporphic extension of $a$ and $\p_{Z} = \frac{1}{2} \left(\p_{\textrm{Re} \, Z} - i \p_{\textrm{Im} \, Z} \right)$. For additional background concerning the holomorphic symbol class $S_{\textrm{Hol}}(m)$, see Section 12.3 of \cite{Lectures_on_Resonances}.

The following theorem is the main result of this work, which establishes $L^p$ bounds for low-lying eigenfunctions of the operator $P$ defined in (\ref{operators_we_consider}) under the assumption that $p_0, p_1 \in S_{\textrm{Hol}}(m)$ and that the quadratic form $q$ introduced in (\ref{quadratic_approx_at_0}) is elliptic along its singular space $S$, i.e.
\begin{align} \label{elliptic along singular space definition}
	q(X) = 0, \ X \in S \implies X = 0.
\end{align}
In formulating our theorem, we shall be equivalently concerned with $L^2$-normalized solutions $u = u(h)$ of an equation of the form $P u = 0$. In the sequel, we shall refer to a family $u = u(h) \in L^2(\R^n)$ satisfying
\begin{align}
\begin{split}
	\begin{cases}
		Pu = 0 \ \textrm{on} \ \R^n, \\
		\norm{u}_{L^2(\R^n)} = 1,
	\end{cases}
\end{split}
\end{align}
for all $0<h \le 1$, as a \emph{ground state} for the operator $P$.



\begin{theorem} \label{main_theorem}
	Suppose that $P = \textrm{Op}^w_h(p_0 + hp_1)$, where $p_0, p_1 \in S_{\textrm{Hol}}(m)$ for an order function $m$ on $\R^{2n}$ satisfying (\ref{conditions on order function}). Assume that $p_0$ is $h$-independent and satisfies (\ref{non-negativity of the real part of leading symbol}), (\ref{null set of the real part is 0}), (\ref{imaginary part of principal symbol vanishes to second order}), and (\ref{real part is elliptic at infinity}). Assume, in addition, that the quadratic approximation $q$ to $p_0$ at $0\in \R^{2n}$, defined in (\ref{quadratic_approx_at_0}), is elliptic along its singular space $S$ in the sense (\ref{elliptic along singular space definition}). If $u = u(h) \in L^2(\R^n)$ is such that
	\begin{align} \label{ground_state_equation}
	\begin{split}
	\begin{cases}
		Pu=0 \ \ \textrm{on} \ \ \R^n, \ \ n \ge 1, \\
		\norm{u}_{L^2(\R^n)} = 1,
	\end{cases}
	\end{split}
	\end{align}
	for all $0<h \le 1$, then there exists $0<h_0 \le 1$ such that
	\begin{align} \label{main_bound}
		\norm{u}_{L^p(\R^n)} \le \mathcal{O}(1)h^{\frac{n}{2p}-\frac{n}{4}}, \ \ \ 1 \le p \le \infty,
	\end{align}
	for all $0< h \le h_0$.
\end{theorem}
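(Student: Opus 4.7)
The plan is to pass to the FBI--Bargmann side, use the singular-space ellipticity together with analyticity to obtain Gaussian concentration of the transformed eigenfunction, and then invert the FBI transform to recover pointwise (and hence $L^p$) bounds on $u$.

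First I would apply a Bargmann-type FBI transform $\FBI$ with a suitable holomorphic quadratic phase $\varphi$, normalized so that $\FBI$ is a unitary map from $L^2(\R^n)$ onto the weighted Bergman space
\begin{align*}
    H_\Phi = \Hol(\C^n) \cap L^2(\C^n, e^{-2\Phi/h}\, dL),
\end{align*}
for a strictly plurisubharmonic quadratic weight $\Phi$. Because $p_0, p_1 \in S_{\textrm{Hol}}(m)$, the conjugated operator $P_\Phi = \FBI P \FBI^{-1}$ is an analytic pseudodifferential operator on $H_\Phi$ in the Sj\"{o}strand sense (see \cite{Lectures_on_Resonances}), whose leading symbol agrees with $p_0$ after the canonical identification of $\R^{2n}$ with the totally real submanifold $\Lambda_\Phi \subset \C^{2n}$ associated to $\Phi$. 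The ground state $u$ is carried to $v = \FBI u$ with $\norm{v}_{H_\Phi} = 1$ and $P_\Phi v = 0$. This is the framework already exploited in \cite{Stolk_Herau_Sjostrand} and \cite{EigenvaluesAndSubelliptic}.

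Second, I would establish Gaussian concentration of $v$ near the origin of $\C^n$. By the quadratic spectral theory of \cite{QuadraticOperators}, the ellipticity of $q$ along its singular space $S$ ensures that the quadratic model of $P_\Phi$ at $0$ is globally hypoelliptic with a non-zero spectral gap, and that its ground state lies in the Bargmann--Fock space with Gaussian decay. To transport this model concentration to $v$ itself I would construct a plurisubharmonic perturbation $G$ on $\C^n$ with $G(Z) \ge c \min(\abs{Z}^2, \abs{Z})$ away from $0$, and establish the weighted estimate
\begin{align*}
    \norm{e^{G/h} v}_{L^2(\C^n, e^{-2\Phi/h}\, dL)} \le \mathcal{O}(1).
\end{align*}
Analyticity is essential: it permits the weight deformation $\Phi \rightsquigarrow \Phi + G$ to be performed within the Sj\"{o}strand analytic calculus, converting the microlocal positive-commutator argument driven by singular-space ellipticity into an honest weighted $L^2$ estimate in the Bergman space.

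Third, I would transfer this weighted bound on $v$ to pointwise bounds on $u$ by inverting the FBI transform. Writing $u = \FBI^* v$ and applying Cauchy--Schwarz against the explicit Gaussian FBI kernel yields
\begin{align*}
    \abs{u(x)} \le \mathcal{O}(1) h^{-n/4} e^{-\psi(x)/h}, \ \ x \in \R^n,
\end{align*}
for a real-valued phase $\psi$ with $\psi(x) \gtrsim \min(\abs{x}^2, \abs{x})$; integrating this Gaussian pointwise bound immediately gives the desired $\norm{u}_{L^p(\R^n)} \le \mathcal{O}(1) h^{n/(2p) - n/4}$ for every $1 \le p \le \infty$, with sharpness confirmed by the harmonic-oscillator model $p_0(x,\xi) = \abs{x}^2 + \abs{\xi}^2$. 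The hardest step will be the construction of the weight $G$ in the second paragraph: when $\textrm{Re} \, q$ is merely elliptic along $S$ rather than globally positive-definite, $G$ must be engineered to interact correctly with the hypoelliptic directions identified by the iterated compositions $(\textrm{Re} \, F)(\textrm{Im} \, F)^j$ in (\ref{definition of singular space}), and this is precisely the point where the hypothesis $p_0, p_1 \in S_{\textrm{Hol}}(m)$ becomes indispensable.
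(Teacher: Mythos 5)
Your overall architecture (pass to the FBI--Bargmann side, obtain a weighted $L^2$ bound that encodes Gaussian concentration, invert the transform to control $\norm{u}_{L^p}$) is aligned with the paper's proof, and your Step~3 would work more or less as you sketch it. However, the heart of the argument is your Step~2, and there you have a genuine gap. You propose to manufacture a \emph{static} plurisubharmonic perturbation $G \gtrsim \abs{z}^2$ and derive the bound $\norm{e^{G/h}v}_{L^2_\Phi} = \mathcal{O}(1)$ via a ``microlocal positive-commutator argument driven by singular-space ellipticity.'' When $\textrm{Re}\,q$ is positive-definite on $\Lambda_\Phi$ this is exactly the Krupchyk--Uhlmann mechanism: $\textrm{Re}\,q \gtrsim \abs{Z}^2$ supplies the sign needed to close a one-step commutator estimate. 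But under the weaker hypothesis that $q$ is merely elliptic along its singular space $S$, the form $\textrm{Re}\,q$ vanishes identically on $S$, and a direct commutator against any fixed weight $e^{G/h}$ with $G \gtrsim \abs{z}^2$ produces an uncontrollable sign on the degenerate directions. The hypoelliptic gain supplied by the iterated compositions $(\textrm{Re}\,F)(\textrm{Im}\,F)^j$ is a higher-order, multi-step phenomenon and does not translate into a single static weight $G$ with the required $\abs{z}^2$ lower bound; you acknowledge this is ``the hardest step,'' but the plan as written would not close.

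What the paper actually does in place of your Step~2 is dynamical rather than commutator-based. It lets $\Phi_t$ solve the complex-time eikonal equation $2\partial_t\Phi_t + i\mathfrak{p}_0(z, \tfrac{2}{i}\partial_z\Phi_t) = 0$ with $\Phi_0 = \Phi$ (Proposition~\ref{generating function proposition}), and propagates the normalization $\norm{\mathcal{T}_\varphi u}_{L^2_\Phi} = 1$ to the uniform bound $\norm{\mathcal{T}_\varphi u}_{L^2_{\Phi_t}(\{\abs{z}<\delta\})} \le C$ for all $t \in D(0,T_0)$ via the quantization--multiplication theorem and Gronwall's inequality (Proposition~\ref{locally in evolved spaces}); no positive commutator appears, and the vanishing of the transport term ${\rm I}(t;h)$ in \eqref{derivative of M_t} is a consequence of the eikonal equation rather than a positivity argument. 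The effective weight $G$ in your notation is then $\Phi_0 - \Phi_{t_0}$ for a carefully chosen nonzero complex time $t_0 = -\epsilon\rho - is$, and the bound $\Phi_0 - \Phi_{t_0} \ge c\abs{z}^2$ (Proposition~\ref{good complex time proposition}) is \emph{not} manufactured directly: it comes from the quadratic eikonal flow, the block decomposition of $q$ into a trivially-singular piece $q_1$ and a purely imaginary oscillator $q_2$, and the finite-iteration lower bound $\Phi^{(1)}_0 - \Xi^{(1)}_{-is} \ge c\,s^{2k^{(1)}_0+1}\abs{z'}^2$ involving the Kato index $k^{(1)}_0$ of $q_1$. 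The balancing of the real and imaginary parts of $t_0$ against the two blocks is precisely the delicate interaction with the iterated kernels that your sketch names but does not carry out; without this dynamical construction, the weight gain on the degenerate directions is unavailable, and your Step~2 does not close.
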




\begin{example} \label{Harmonic Oscillator Example}
	As noted in \cite{Eigenfunction_Bounds}, the bounds (\ref{main_bound}) are saturated by the $L^2$-normalized eigenfunctions of the quantum harmonic oscillator on $\R^n$. Let
	\begin{align}
		P = -h^2 \Delta + \abs{x}^2, \ \ x \in \R^n, \ \ n \ge 1.
	\end{align}
	The operator $P$, when viewed as an unbounded operator on $L^2(\R^n)$ with the domain
	\begin{align}
		\mathcal{D}(P) = \set{u \in L^2(\R^n)}{x^\alpha \p^\beta_x u \in L^2(\R^n), \ \abs{\alpha+\beta} \le 2},
	\end{align}
	is self-adjoint and has a discrete spectrum. Explicitly, the eigenvalues of $P$ are given by
	\begin{align}
		\lambda_\alpha(h) = (2 \abs{\alpha} + n)h, \ \ \alpha \in \N^n,
	\end{align}
	and the corresponding $L^2$-normalized eigenfunctions have the form
	\begin{align}
		u_\alpha(h)(x) = h^{-\frac{n}{4}} p_\alpha(h^{-\frac{1}{2}} x) e^{-\frac{\abs{x}^2}{2h}}, \ \ x \in \R^n,
	\end{align}
	where $p_\alpha$ is a Hermite polynomial of degree $\abs{\alpha}$. For more information, see Section 6.1 of \cite{SemiclassicalAnalysis}. An explicit computation gives that
	\begin{align}
		\norm{u_\alpha(h)}_{L^p(\R^n)} = C(\alpha, p) h^{\frac{n}{2p} - \frac{n}{4}}, \ \ 0 < h \le 1,
	\end{align}
	where
	\begin{align}
		C(\alpha, p) := \norm{p_\alpha(\cdot) e^{-\frac{\abs{\cdot}^2}{2}}}_{L^p(\R^n)}, \ \ \alpha \in \N^n, \ \ 1 \le p \le \infty.
	\end{align}
	Since
	\begin{align}
		P = \textrm{Op}^w_h(p_0),
	\end{align}
	for
	\begin{align}
		p_0(x,\xi) = \abs{x}^2 + \abs{\xi}^2, \ \ (x,\xi) \in \R^{2n},
	\end{align}
	which is a non-negative elliptic quadratic form on $\R^{2n}$, we see that the bound (\ref{main_bound}) is sharp within the class of semiclassical pseudodifferential operators on $\R^n$ satisfying the hypotheses of Theorem \ref{main_theorem}.
\end{example}

\begin{example}
	A more general class of semiclassical operators to which Theorem \ref{main_theorem} applies are Sch\"{o}dinger operators on $\R^n$ with analytic complex-valued potentials. Let
	\begin{align} \label{Schrodinger operator with complex potential}
		P = -h^2 \Delta + V(x) \ \ \textrm{on} \ \ \R^n, \ \ n \ge 1,
	\end{align}
	where $V \in C^\infty(\R^n; \C)$ satisfies the following assumptions:
	\begin{enumerate}
		\item $\textrm{Re} \, V \ge 0$,
		\item $(\textrm{Re} \, V)^{-1}(0) = \{0\}$,
		\item $(\textrm{Im} \, V)(0) = 0$ and $\nabla(\textrm{Im} \, V)(0) = 0$,
		\item $\det{V''(0)} \neq 0$,
		\item there exists $s \ge 0$ such that
			\begin{align} \label{ellipticity of the potential}
				\textrm{Re} \, V(x) \ge \frac{1}{C} \abs{x}^s, \ \ \abs{x} \ge C,
			\end{align}
			for some $C>0$, and
		\item there exists $\epsilon > 0$ and a holomorphic extension $\tilde{V} \in \textrm{Hol}(\R^n + i(-\epsilon, \epsilon)^n)$ of $V$ satisfying
		\begin{align}
			\abs{\tilde{V}(z)} \le C \langle \textrm{Re} \, z \rangle^s, \ \ z \in \R^n + i(-\epsilon, \epsilon)^n,
		\end{align}
		for some $C>0$, where $s$ is as in (\ref{ellipticity of the potential}). 
	\end{enumerate}
	We may view the operator $P$ as a closed, densely defined operator on $L^2(\R^n)$ with the the maximal domain
	\begin{align}
		\mathcal{D}(P) = \set{u \in L^2(\R^n)}{(-h^2 \Delta + V(x)) u \in L^2(\R^n)}.
	\end{align}
	The spectrum of $P$ in an open disc $D(0,Ch)$ of radius $C h$, centered at $0 \in \C$, is discrete, and the low-lying eigenfunctions of $P$ correspond to eigenvalues $z(h) \in D(0,Ch)$.
	
	Let
	\begin{align}
	p_0(x,\xi) = \abs{\xi}^2 + V(x), \ \ (x,\xi) \in \R^{2n}.	
	\end{align}
	We observe that
	\begin{align}
		P = \textrm{Op}^w_h(p_0).
	\end{align}
	Thanks to our assumptions on $V$, the symbol $p_0$ admits the holomorphic extension
	\begin{align}
		\tilde{p}_0(z,\zeta) = \zeta^2 + \tilde{V}(z), \ \ (z,\zeta) \in \R^{2n} + i(-\epsilon, \epsilon)^{2n},
	\end{align}
	for any $\epsilon>0$. Clearly
	\begin{align}
		\textrm{Re} \, p_0 \ge 0 \ \ \textrm{on} \ \ \R^{2n},
	\end{align}
	with
	\begin{align}
		p_0(0) = 0, \ \ \nabla p_0(0) = 0,
	\end{align}
	and we have
	\begin{align}
		\tilde{p}_0 \in S_{\textrm{Hol}}(m)
	\end{align}
	for the order function
	\begin{align}
		m(x,\xi) = \langle{\xi}\rangle^2 + \langle x \rangle^s, \ \ (x,\xi) \in \R^{2n},
	\end{align}
	where $s$ is as in (\ref{ellipticity of the potential}). Also, from (\ref{ellipticity of the potential}), we see that there is $C>0$ such that
	\begin{align}
		\textrm{Re} \, p_0(x,\xi) \ge \frac{1}{C} m(x,\xi), \ \ \abs{(x,\xi)} \ge C.
	\end{align}
	The Hessian of $p_0$ at $0 \in \R^{2n}$ is
	\begin{align}
		p_0''(0) =
		\begin{pmatrix}
			V''(0) & 0 \\
			0 & 2I_n
		\end{pmatrix},
	\end{align}
	where $I_n$ denotes the identity matrix of size $n \times n$. Let
	\begin{align}
		q(Z) = \frac{1}{2} p_0''(0) Z \cdot Z, \ \ Z \in \C^{2n},
	\end{align}
	be the quadratic approximation to $p_0$ at $0 \in \C^{2n}$. A straightforward computation shows that the Hamilton matrix of $q$ is
	\begin{align}
		F =
		\begin{pmatrix}
			0 & I \\
			-\frac{1}{2} V''(0) & 0
		\end{pmatrix}.
	\end{align}
	Thus
	\begin{align}
		\textrm{Re} \, F = 		
		\begin{pmatrix}
			0 & I \\
			-\frac{1}{2} (\textrm{Re} \, V)''(0) & 0
		\end{pmatrix}, \ \
		\textrm{Im} \, F =
		\begin{pmatrix}
			0 & 0 \\
			-\frac{1}{2} (\textrm{Im} \, V)''(0) & 0
		\end{pmatrix},
	\end{align}
	and hence
	\begin{align}
	(\textrm{Re} \, F)(\textrm{Im} \, F) =
	\begin{pmatrix}
		-\frac{1}{2} (\textrm{Im} \, V)''(0) & 0 \\
		0 & 0
	\end{pmatrix}, \ \ 
		(\textrm{Re} \, F)(\textrm{Im} \, F)^j = 0 \ \textrm{for all} \ j \ge 2.
	\end{align}
	It follows that
	\begin{align}
		\ker{\left(\textrm{Re} \, F \right)} = \ker{\left[(\textrm{Re} \, V)''(0) \right]} \times \{0\}, \ \ \ker{\left[(\textrm{Re} \, F)(\textrm{Im} \, F)\right]} = \ker{\left[(\textrm{Im} \, V)''(0)\right]} \times \C^n,
	\end{align}
	and
	\begin{align}
		\ker{\left[(\textrm{Re} \, F)(\textrm{Im} \, F)^j\right]} = \C^{2n} \ \textrm{for all} \ j \ge 2.
	\end{align}
	We thus see that the singular space of $q$ is
	\begin{align}
		S = \left[\ker{\left(V''(0)\right)} \cap \R^n \right] \times \{0\}.
	\end{align}
	Because $\det{V''(0)} \neq 0$, we may deduce that the singular space of $q$ is trivial,
	\begin{align}
		S = \{0\}.
	\end{align}
	In particular, $q$ is elliptic along $S$. By Theorem \ref{main_theorem}, any low-lying eigenfunction $u$ of $P$ must satisfy the bounds (\ref{main_bound}).
	
	We point out that computation of the singular space $S$ above is essentially a special case of Lemma 2.2 in \cite{Hitrik_Bellis}. We refer to this work for the related topic of magnetic Schr\"{o}dinger operators with complex-valued potentials.
	
\end{example}

\begin{remark} \label{comparison remark}
	Theorem \ref{main_theorem} improves the main result of \cite{Eigenfunction_Bounds} in several ways when it is assumed in addition that the symbols $p_0$ and $p_1$ belong to the holomorphic symbol class $S_{\textrm{Hol}}(m)$. Most significantly, the bound (\ref{main_bound}) holds under the weaker assumption that the quadratic approximation $q$ to $p_0$ at $0\in \R^{2n}$ is elliptic only along its singular space $S$. It is also of note that the work \cite{Eigenfunction_Bounds} only establishes the bound (\ref{main_bound}) for $p$ in the range $2 \le p \le \infty$. By contrast, the approach we give in this paper yields the bound (\ref{main_bound}) for $p$ in the entire range $1 \le p \le \infty$. That the bound (\ref{main_bound}) should also hold for $p$ in the range $1 \le p < 2$ is actually a very reasonable expectation in view of Example \ref{Harmonic Oscillator Example}. Finally, in \cite{Eigenfunction_Bounds}, it is assumed that the symbols $p_0$ and $p_1$ grow at most quadratically as $\abs{X} \rightarrow \infty$, i.e.
	\begin{align} \label{first prior assumption}
		\norm{\p^\alpha p_0}_{L^\infty(\R^n)}, \ \norm{\p^\alpha p_1}_{L^\infty(\R^n)} = \mathcal{O}(1), \ \ h \rightarrow 0^+, \ \ \abs{\alpha} \ge 2,
	\end{align}
	and that $\textrm{Re} \, p_0$ grows at least quadratically as $\abs{X} \rightarrow \infty$, i.e. there exists $C,c>0$ such that
	\begin{align} \label{second prior assumption}
		\textrm{Re} \, p_0(X) \ge c \langle X \rangle^2, \ \ \abs{X} \ge C.
	\end{align}
	Our main result, on the other hand, applies to operators whose symbols may grow faster than $\langle X \rangle^2$ at infinity. For example, Theorem \ref{main_theorem} applies to low-lying eigenfunctions of the operator
	\begin{align}
		P = -h^2 \Delta + \abs{x}^2 + h^4 \Delta^2 + \abs{x}^4, \ \ x \in \R^n, \ \ n \ge 1,
	\end{align}
	whose Weyl symbol
	\begin{align}
		p_0(x,\xi) = \abs{x}^2 + \abs{\xi}^2 + \abs{x}^4 + \abs{\xi}^4, \ \ (x,\xi) \in \R^{2n},
	\end{align}
	has size comparable to $\langle (x,\xi) \rangle^4$ when $|(x,\xi)| \rightarrow \infty$.
\end{remark}


We conclude this introduction with an informal overview of the proof of Theorem \ref{main_theorem}. Since the symbol $p_0$ is elliptic away from $0$, the ground states $u$ of $P$ are well microlocalized to any small but fixed neighborhood of $0$ in the phase space $\R^{2n}$. This fact may be expressed conveniently in terms of any global metaplectic Fourier-Bros-Iagolnitzer (FBI) transform of $u$. Let $\varphi$ be a holomorphic quadratic form on $\C^{2n} = \C^n_z \times \C^n_y$ such that
\begin{align}
	\det{\varphi''_{zy}} \neq 0, \ \ \textrm{Im} \, \varphi''_{yy}>0.
\end{align}
We refer to such a holomorphic quadratic form as an \emph{FBI phase function on $\C^{2n}$} (\cite{SemiclassicalAnalysis}, \cite{Minicourse}, \cite{GlobalILagrangians}). Let $\mathcal{T}_\varphi: \mathcal{S}'(\R^n) \rightarrow \textrm{Hol}(\C^n)$, given by
\begin{align} \label{associated_FBI_transform}
	\mathcal{T}_\varphi v(z) = c_\varphi h^{-3n/4} \int_{\R^n} e^{i \varphi(z,y) / h} v(y) \, dy, \ \ v \in \mathcal{S}'(\R^n),
\end{align}
where
\begin{align}
	c_\varphi = 2^{-n/2} \pi^{-3n/4} (\det{\textrm{Im} \ \varphi''_{yy}})^{-1/4} \abs{\det{\varphi''_{zy}}},
\end{align}
be the corresponding metaplectic FBI transform on $\R^n$. Let $\C^{2n} = \C^n_z \times \C^n_\zeta$ be equipped with the standard complex symplectic form $\sigma = d\zeta \wedge dz \in \Lambda^{(2,0)}(\C^{2n})$. We recall that $\mathcal{T}_\varphi$ is a Fourier integral operator associated to the complex linear canonical transformation $\kappa_\varphi: \C^{2n} \rightarrow \C^{2n}$ given implicitly by
\begin{align}
	\kappa_\varphi: (w, -\p_w \varphi(z,w)) \mapsto (z,\p_z \varphi(z,w)), \ \ (z,w) \in \C^{2n},
\end{align}
and that $\mathcal{T}_\varphi$ maps $L^2(\R^n)$ unitarily onto the Bargmann space
\begin{align}
	H_{\Phi_{0}}(\C^n) = L^2(\C^n, e^{-2\Phi_0(z)/h} \, L(dz)) \cap \textrm{Hol}(\C^n).
\end{align}
Here $L(dz)$ is the Lebesgue measure on $\C^n$, and $\Phi_0(z)$ is the strictly plurisubharmonic quadratic form on $\C^n$ given by
\begin{align} \label{definition_of_the_psh_weight}
	\Phi_0(z) = \textrm{max}_{y\in \R^n} \left(-\textrm{Im} \ \varphi(z,y) \right), \ \ z \in \C^n.
\end{align}
We also recall that $\kappa_\varphi$ maps the real phase space $\R^{2n}$ isomorphically onto the $I$-Lagrangian, $R$-symplectic subspace
\begin{align}
	\Lambda_{\Phi_0} := \set{\left(z, \frac{2}{i} \p_z \Phi_0(z)\right)}{z\in \C^n},
\end{align}
of $\C^{2n}$. Let $\pi_1 : \C^{2n} \rightarrow \C^n$ be the projection onto the first factor in $\C^{2n}$, $\pi_1: (z,\zeta) \mapsto z$, and observe that $\pi_1$ restricts to an $\R$-linear isomorphism $\Lambda_{\Phi} \rightarrow \C^n$. Thus the map
\begin{align}
	\kappa_{\varphi}^\flat := \pi_1 \circ \left. \kappa_{\varphi}\right|_{\R^{2n}}
\end{align}
 is an $\R$-linear isomorphism $\R^{2n} \rightarrow \C^n$. Since $u$ is microlocalized near $0$ in $\R^{2n}$, it follows from standard arguments that the $L^1$-mass of the function $\mathcal{T}_\varphi u(z) e^{-\Phi_0(z)/h}$ with respect to $L(dz)$ is $\mathcal{O}(h^\infty)$ in the complement of any small but fixed neighborhood of $0$ in $\C^n$ (see Proposition \ref{localization_lemma_FBI_side} below). By combining this observation with the identity $u = \mathcal{T}_\varphi^* \mathcal{T}_\varphi u$ and applying the Minkowski integral inequality, one can show that for any $N \in \N$, $\delta>0$, and $1 \le p \le \infty$, there is $C = C(\delta, N, p)>0$ and $0<h_0 \le 1$, independent of $p$, such that
\begin{align} \label{L^p_bound_from_FBI_side}
	\norm{u}_{L^p(\R^n)} \le C h^{\frac{n}{2p}-\frac{3n}{4}} \int_{\abs{z} < \delta} \abs{\mathcal{T}_\varphi u(z)} e^{-\Phi_0(z)/h} \, L(dz) + Ch^N, \ \ 0< h \le h_0.
\end{align}
For a proof, see Proposition \ref{first_bound_Lp_norm} below.

Establishing the validity of (\ref{L^p_bound_from_FBI_side}) for any FBI phase function $\varphi$ is the first main step in the proof of Theorem \ref{main_theorem}. The next step is to show that there exists an FBI phase function $\varphi$ on $\C^{2n}$ and a real analytic strictly plurisubharmonic function $\Phi^* \in C^{\omega}(\textrm{neigh}(0; \C^n); \R)$ such that
\begin{align} \label{quadratic_decay_near_0}
	\Phi_0(z) - \Phi^*(z) \ge c \abs{z}^2, \ \ z \in \textrm{neigh}(0; \C^n),
\end{align}
for some $c>0$, and
\begin{align} \label{microlocal regularity}
	\norm{\mathcal{T}_\varphi u}^2_{L^2_{\Phi^*}(\textrm{neigh}(0; \C^n))} := \int_{\textrm{neigh}(0; \C^n)} \abs{\mathcal{T}_\varphi u(z)}^2 e^{-2 \Phi^*(z) / h} \, L(dz) = \mathcal{O}(1), \ \ h \rightarrow 0^+.
\end{align}
If such a weight $\Phi^*$ can be found, then, for $\delta>0$ small enough, we have
\begin{align} \label{Gaussian_near_0}
\begin{split}
\int_{\abs{z} < \delta} \abs{\mathcal{T}_\varphi u(z)} e^{-\Phi_0(z) / h} \, L(dz)	 &= \int_{\abs{z} < \delta} \abs{\mathcal{T}_\varphi u(z)} e^{-\Phi^*(z)/h} e^{-c \abs{z}^2 / h} \, L(dz)
\le \mathcal{O}(1) h^{\frac{n}{2}},
\end{split}
\end{align}
by the Cauchy-Schwart inequality. Combining (\ref{L^p_bound_from_FBI_side}) and (\ref{Gaussian_near_0}) gives the desired bound (\ref{main_bound}).

The construction of the strictly plurisubharmonic weight $\Phi^*$ defined near $0$ in $\C^{n}$ satisfying (\ref{quadratic_decay_near_0}) and (\ref{microlocal regularity}) is purely dynamical. We give a rough sketch of the procedure. Let
\begin{align}
	\mathfrak{p}_0 := p_0 \circ \kappa_{\varphi}^{-1} \in \textrm{Hol}(\Lambda_{\Phi_0}+W),
\end{align}
where $W \subset \subset \C^{2n}$ is a sufficiently small open neighborhood of $0$ in $\C^{2n}$. Let
\begin{align*}
H_{\mathfrak{p}_0} = \mathfrak{p}'_{0, \zeta} \cdot \p_z - \mathfrak{p}'_{0, z} \cdot \p_{\zeta} \in T^{(1,0)}(\Lambda_{\Phi_0}+W)
\end{align*}
be the complex Hamilton vector field of $\mathfrak{p}_0$. For $t \in \C$ with $\abs{t}$ sufficiently small, we may define the complex-time Hamilton flow $\kappa_t = \exp{(tH_{\mathfrak{p}_0})}$ of $\mathfrak{p}_0$ in a neighborhood of $0 \in \C^{2n}$ as follows:
\begin{align}
(z(t), \zeta(t)) = \kappa_t(z_0, \zeta_0), \ \ (z_0, \zeta_0) \in \textrm{neigh}(0; \C^{2n}), \ \ t \in \textrm{neigh}(0; \C),
\end{align}
if and only if $z(t)$ and $\zeta(t)$ satisfy the complex Hamilton's equations
\begin{align}
	\begin{split}
	\begin{cases}
		\p_t z(t) = \p_\zeta \mathfrak{p}_0(z(t), \zeta(t)), \\
		\p_t \zeta(t) = -\p_z \mathfrak{p}_0(z(t), \zeta(t)), \\
		z(0) = z_0, \ \zeta(0) = \zeta_0,
	\end{cases}
	\end{split}
\end{align}
where $\p_t = \frac{1}{2}(\p_{\textrm{Re} \, t} - i \p_{\textrm{Im} \, t})$. From the work \cite{WF_Multiple}, it is known that there is $0< T \ll 1$ such that
\begin{align}
	\kappa_t (\Lambda_{\Phi_0} \cap \, \textrm{neigh}(0; \C^{2n})) \cap \textrm{neigh}(0; \C^{2n}) = \Lambda_{\Phi_t}, \ \ 0 \le \abs{t} < T,
\end{align}
where $(\Phi_t)_{\abs{t} < T}$ is a family of real-valued, strictly plurisubharmonic functions defined in a neighborhood of $0 \in \C^n$ solving the complex-time eikonal equation
\begin{align} \label{complex_eikonal_equation}
\begin{split}
\begin{cases}
	2 \p_t \Phi_t(z) + i \mathfrak{p}_0 \left(z, \frac{2}{i} \p_z \Phi_t(z) \right) = 0, \ \ \abs{t}<T, \ \ z \in \textrm{neigh}(0; \C^n), \\
	\left. \Phi_t \right|_{t=0}(z) = \Phi_0(z), \ \ z \in \textrm{neigh}(0; \C^n),
\end{cases}
\end{split}
\end{align}
and $(\Lambda_{\Phi_t})_{\abs{t} < T}$ is the family of $I$-Lagrangian, $R$-symplectic submanifolds of $\C^{2n}$ given by
\begin{align}
	\Lambda_{\Phi_t} := \set{\left(z, \frac{2}{i} \p_z \Phi_t(z) \right)}{z \in \textrm{neigh}(0; \C^n)}, \ \ \abs{t}<T.
\end{align}
For a proof, see the discussion in Section 3 below. In particular, since $\mathfrak{p}_0$ vanishes to second order at $0 \in \C^{2n}$ and $\Phi_0(0) = 0$, and we have that
\begin{align*}
	\Phi_0 - \Phi_t \ \textrm{vanishes to $2^{\textrm{nd}}$ order at $z=0$ for all $0 \le \abs{t} < T$}.
\end{align*}
For $\abs{t} < T$, let $\Xi_t$ be the quadratic approximation to $\Phi_t$ at $0$, i.e. $\Xi_t$ is the real quadratic form on $\C^n$ that begins the Taylor expansion of $\Phi_t$ at $0 \in \C^n$,
\begin{align}
	\Phi_t(z) = \Xi_t(z) + \mathcal{O}(\abs{z}^3), \ \ \abs{z} \rightarrow 0^+, \ \ 0 \le \abs{t} < T.
\end{align}
Taylor expanding (\ref{complex_eikonal_equation}) to second order about $z=0$ shows that $\Xi_t$ must satisfy the quadratic complex-time eikonal equation
\begin{align} \label{quadratic_eikonal_equation}
\begin{split}
\begin{cases}
	2 \p_t \Xi_t(z) + i \mathfrak{q} \left(z, \frac{2}{i} \p_z \Xi_t(z) \right) = 0, \ \ \abs{t}<T, \ \ z \in \C^n, \\
	\left. \Xi_t \right|_{t=0} = \Phi_0 \ \textrm{on} \ \C^n,
\end{cases}
\end{split}	
\end{align}
where $\mathfrak{q} := q \circ \kappa_{\varphi}^{-1}$ is the quadratic approximation to $\mathfrak{p}_0$ at $0$, i.e. 
\begin{align}
	\mathfrak{p}_0(Z) = \mathfrak{q}(Z) + \mathcal{O}(\abs{Z}^3), \ \ Z \in \C^{2n}, \ \ \abs{Z} \rightarrow 0^+.
\end{align}
Using the assumption that $q$ is elliptic along $S$, one can show that there exists an FBI phase function $\varphi$ on $\C^{2n}$ with the property that for any $0<T_0 < T$ there exists a non-zero complex time $t_0$ with $\abs{t_0} < T_0$ such that
\begin{align} \label{good time quadratic estimate}
	\Phi_0(z) - \Xi_{t_0}(z) > 0, \ \ z \in \C^n \backslash \{0\}.
\end{align}
A proof of this claim is given in Proposition \ref{good complex time proposition} below. Taking
\begin{align} \label{definition_of_tilde_phi}
	\Phi^* := \Phi_{t_0},
\end{align}
we obtain a strictly plurisubharmonic weight $\Phi^*$ defined in a neighborhood of $0$ in $\C^n$ so that (\ref{quadratic_decay_near_0}) holds. Now, by Egorov's theorem, $\mathcal{T}_\varphi u$ satisfies the FBI-side pseudodifferential equation
\begin{align} \label{FBI side equation}
	\textrm{Op}^w_{\Phi_0, h}(\mathfrak{p}_0 + h \mathfrak{p}_1) \mathcal{T}_\varphi u = 0, \ \ 0 < h \le 1,
\end{align}
where $\mathfrak{p}_j := p_j \circ \kappa_{\varphi}^{-1} \in \textrm{Hol}(\Lambda_{\Phi_0}+W)$, $j=0,1$, and $\textrm{Op}^w_{\Phi_0, h}(\mathfrak{p}_0 + h \mathfrak{p}_1)$ denotes the complex Weyl quantization of $\mathfrak{p}_0+h\mathfrak{p}_1$ with respect to the weight $\Phi_0$ (see the discussion in Section 2 below). From (\ref{FBI side equation}) and (\ref{complex_eikonal_equation}), one may deduce that $\mathcal{T}_\varphi u$ satisfies the following dynamical bound in a sufficiently small neighborhood of $0$ in $\C^n$:
\begin{align} \label{locally in evolved spaces intro}
	\exists 0<T_0 < T, \ \exists \delta>0, \ \exists 0 < h_0 \le 1: \  \ \ \sup_{\substack{0 \le \abs{t} < T_0 \\ 0< h \le h_0}} \norm{\mathcal{T}_\varphi u}_{L^2_{\Phi_t}( \{\abs{z} < \delta \})} < \infty.
\end{align}
The proof of (\ref{locally in evolved spaces intro}), whose main ingredient is the quantization-multiplication theorem for pseudodifferential operators with holomorphic symbols (\cite{Minicourse}, \cite{Lectures_on_Resonances}, \cite{Sjostrand_Geometric}), is given in Section 3 below. In particular, from (\ref{locally in evolved spaces intro}), it follows that
\begin{align}
	\sup_{0< h \le h_0} \norm{\mathcal{T}_\varphi u}_{L^2_{\Phi_{t_0}}(\{\abs{z} < \delta \})} < \infty,
\end{align}
where $0< \abs{t_0} < T_0$ is such that (\ref{good time quadratic estimate}) holds. Thus the weight $\Phi^*$ defined by (\ref{definition_of_tilde_phi}) satisfies (\ref{quadratic_decay_near_0}) and (\ref{microlocal regularity}).


The plan for this paper is as follows. In Section 2, we prove that the ground states $u$ of $P$ are well microlocalized to any fixed neighborhood of $0$ in $\R^{2n}$ and we establish bounds for FBI transforms of $u$. In Section 3, we give a self-contained derivation of the complex-time eikonal equation (\ref{complex_eikonal_equation}), and we prove the dynamical bound (\ref{locally in evolved spaces intro}). In Section 4, we conclude the proof of Theorem \ref{main_theorem} as outlined in this introduction by proving the existence of an FBI phase function $\varphi$ on $\C^{2n}$ and a small non-zero complex time $t_0$ such that the weight $\Phi^*$ defined by (\ref{definition_of_tilde_phi}) satisfies (\ref{quadratic_decay_near_0}).

\vspace{5mm}

{\bf{Acknowledgements.}} The author would like to thank Michael Hitrik for reading an early draft of this manuscript and offering helpful feedback and suggestions.


\section{Microlocalization of the Ground States}


In this section, we establish that any ground state $u = u(h) \in L^2(\R^n)$ of the operator $P$ is well-microlocalized to any small, but fixed, neighborhood of $0 \in \R^{2n}$. We begin with an elementary parametrix construction, which we carry out using the calculus of semiclassical pseudodifferential operators on $\R^n$ with $C^\infty$-symbols. For background on semiclassical pseudodifferential calculus on $\R^n$, including standard notation, we refer to Chapters 4 and 8 of \cite{SemiclassicalAnalysis}.

\begin{proposition} \label{course_microlocalization_real_side}
	Let $p_0, p_1 \in S(m)$, where $m$ is an order function on $\R^{2n}$ satisfying (\ref{conditions on order function}). Assume that $p_0$ is $h$-independent and that $p_0$ satisfies (\ref{non-negativity of the real part of leading symbol}), (\ref{null set of the real part is 0}), (\ref{imaginary part of principal symbol vanishes to second order}), and (\ref{real part is elliptic at infinity}). If $P = \textrm{Op}^w_h(p_0+hp_1)$ and $u = u(h) \in L^2(\R^n)$ is such that
	\begin{align}
		\begin{split}
		\begin{cases}
			P u = 0 \ \textrm{on} \ \R^n, \ n \ge 1, \\
			\norm{u}_{L^2(\R^n)} = 1,
		\end{cases}	
		\end{split}
	\end{align}
	for all $0<h\le 1$, then, for any $\delta>0$, there exists $\psi \in C^\infty_0(\R^{2n})$ with $\textrm{supp} \, \psi \subset \set{X \in \R^{2n}}{\abs{X} < \delta}$ and there exists $R = \mathcal{O}_{\mathcal{S}' \rightarrow \mathcal{S}}(h^\infty)$ such that
	\begin{align} \label{precise_statement_localization}
		u = \Opw{\psi}u + Ru, \ \ 0<h\le h_0,
	\end{align}
	for some $0< h_0 \le 1$.
\end{proposition}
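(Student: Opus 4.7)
The plan is to run a standard semiclassical elliptic parametrix construction for $P$ on the phase-space complement of a small neighborhood of $0 \in \R^{2n}$. The key input is that $p_0$ is elliptic in the class $S(m)$ outside any fixed neighborhood of the origin: combining $|p_0| \ge \textrm{Re}\,p_0$ with the hypothesis (\ref{null set of the real part is 0}) and a compactness argument on bounded annuli, together with the ellipticity at infinity (\ref{real part is elliptic at infinity}), yields for every $\epsilon > 0$ a constant $c_\epsilon > 0$ such that $|p_0(X)| \ge c_\epsilon\, m(X)$ for all $|X| \ge \epsilon$.

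I would choose $\psi \in C_0^\infty(\R^{2n})$ with $\textrm{supp}\,\psi \subset \{|X| < \delta\}$ and $\psi \equiv 1$ on a slightly smaller neighborhood of $0$, and construct a total symbol $q \sim \sum_{j \ge 0} h^j q_j$ with $q_j \in S(m^{-1})$, each $q_j$ vanishing identically on a neighborhood of $0$, such that
\begin{align*}
\textrm{Op}^w_h(q) \circ P = \textrm{Op}^w_h(1-\psi) + R_0, \quad R_0 = \mathcal{O}_{\mathcal{S}' \to \mathcal{S}}(h^\infty).
\end{align*}
The leading term is $q_0 = (1-\psi)/p_0$, extended by zero on $\{\psi = 1\}$; by the ellipticity observation this lies in $S(m^{-1})$. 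The Moyal product then yields
\begin{align*}
q_0\,\#\,(p_0 + hp_1) = (1 - \psi) + h r_1
\end{align*}
for some $r_1$ supported away from a fixed neighborhood of $0$, and one inductively selects $q_j$ to cancel the accumulated $h^j$-order error by solving an algebraic division by $p_0$ on the region where the cutoffs allow it. Borel-summing the resulting asymptotic series produces the desired $q \in S(m^{-1})$.

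Applying the composition identity to $u$ and using $Pu = 0$ gives $0 = \textrm{Op}^w_h(1-\psi)u + R_0 u$, whence $u = \textrm{Op}^w_h(\psi) u - R_0 u$, which is (\ref{precise_statement_localization}) with $R := -R_0$.

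The main technical point is verifying the strong smoothing property $R_0 = \mathcal{O}_{\mathcal{S}' \to \mathcal{S}}(h^\infty)$ rather than merely an $L^2 \to L^2$ bound. This rests on tracking that each correction $q_j$ is microsupported outside a fixed neighborhood of $0$, where $p_0$ is elliptic of order $m$, so that after $N$ iterations the cumulative error symbol improves both in its $h$-order and in its $S(m)$-order. The Borel-summed remainder then has symbol in $h^\infty S(m^{-\infty})$, and the corresponding operator is smoothing $\mathcal{S}' \to \mathcal{S}$ with $\mathcal{O}(h^\infty)$ norm by the standard Weyl-calculus mapping property (see Chapter 4 of \cite{SemiclassicalAnalysis} or Chapter 7 of \cite{dimassi_sjostrand}). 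The adaptation of the global elliptic parametrix construction to this localized setting is the only non-routine bookkeeping; otherwise the argument is entirely a standard application of the $S(m)$-calculus on $\R^{2n}$.
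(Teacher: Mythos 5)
Your parametrix construction has a genuine gap in the final step. You build a left parametrix $q \sim \sum h^j q_j$ for $P$ away from the origin, so that $\textrm{Op}^w_h(q) \circ P = \textrm{Op}^w_h(1-\psi) + R_0$. The symbol of $R_0$ is $q \# (p_0 + hp_1) - (1-\psi)$, and after Borel summation this lies in $h^\infty S(1)$: each correction $q_j$ is in $S(m^{-1})$, each Moyal product $q_j \#(p_0 + hp_1)$ sits in $S(m^{-1}\cdot m) = S(1)$, and the derivatives appearing in the Moyal expansion do not gain decay in $m$. Your claim that the cumulative error ``improves ... in its $S(m)$-order'' and lands in $h^\infty S(m^{-\infty})$ does not follow from the calculus; there is no $m$-gain to be had, and in any case the hypotheses (\ref{conditions on order function}) do not force $m \to \infty$, so even $S(m^{-\infty})$ would not imply Schwartz decay. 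An operator with symbol in $h^\infty S(1)$ is $\mathcal{O}_{L^2\to L^2}(h^\infty)$ and $\mathcal{O}_{\mathcal{S}\to\mathcal{S}}(h^\infty)$, but it is \emph{not} $\mathcal{O}_{\mathcal{S}'\to\mathcal{S}}(h^\infty)$ — that requires the symbol to lie in $h^\infty\mathcal{S}(\R^{2n})$. Since the subsequent argument (Proposition \ref{localization_lemma_FBI_side}) genuinely uses the $\mathcal{S}'\to\mathcal{S}$ smoothing to bound $\mathcal{T}_\varphi(Ru)$ with polynomial $\langle z\rangle^{-N}$ decay, this is not a cosmetic deficiency.

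The paper avoids this by a different decomposition: rather than inverting $P$ on $\{1-\psi \neq 0\}$, it adds a compactly supported lump $\chi$ to the symbol to make $\tilde p := p_0 + hp_1 + \chi$ globally elliptic in $S(m)$, constructs an \emph{exact} inverse $\textrm{Op}^w_h(e_1)$ of $\textrm{Op}^w_h(\tilde p)$ via Beals' theorem, and then deduces $u = \textrm{Op}^w_h(e_1)\textrm{Op}^w_h(\chi)u$. Splitting off $\textrm{Op}^w_h(\psi)$ with $\psi \equiv 1$ near $\textrm{supp}\,\chi$ leaves the remainder $R = \textrm{Op}^w_h(1-\psi)\textrm{Op}^w_h(e_1)\textrm{Op}^w_h(\chi)$, a three-fold composition whose outer symbols $1-\psi$ and $\chi$ have disjoint supports with $\chi$ compactly supported. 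It is this disjoint-support-plus-compact-support structure — not the ellipticity — that forces the composite symbol into $h^\infty\mathcal{S}(\R^{2n})$ and hence gives the $\mathcal{S}'\to\mathcal{S}$ bound. Your construction has no compactly supported factor sitting next to the disjoint cutoff, so no analogous mechanism is available. If you want to salvage your route, you would at minimum need to iterate the identity $u = \textrm{Op}^w_h(\psi)u - R_0 u$ and separately argue that the ``tail'' term is negligible in whatever sense the downstream estimates actually need, but this changes the statement you are proving.
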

\begin{proof}
Let $\delta>0$ be arbitrary and let $\chi \in C^\infty(\R^{2n}; [0,1])$ be such that 
\begin{align}
	\chi(X) \equiv 1, \ \ \abs{X} \le \frac{\delta}{2},
\end{align}
and
\begin{align}
	\textrm{supp} \, \chi \subset \set{X \in \R^{2n}}{\abs{X} < \delta}.
\end{align}
Set
\begin{align}
	\tilde{p}:= p_0 + hp_1 + \chi, \ \ 0< h \le 1.
\end{align}
Note that $\tilde{p} \in S(m)$. Our objective is to construct a parametrix for the operator $\Opw{\tilde{p}}$. The assumption (\ref{real part is elliptic at infinity}) that $\textrm{Re} \, p_0$ is elliptic in the class $S(m)$ implies that there is $c>0$ such that
\begin{align}
	\textrm{Re} \, \tilde{p}(X;h) \ge c m(X), \ \ X \in \R^{2n},
\end{align}
for all $h>0$ sufficiently small, depending on $\delta$. Thus the symbol
\begin{align}
e_0 := \frac{1}{\tilde{p}}
\end{align}
is well-defined and belongs to the class $S(\frac{1}{m})$. By the semiclassical composition calculus, there is a symbol $r_0 \in S(1)$ such that
\begin{align}
	\Opw{e_0} \Opw{\tilde{p}} = I+h \Opw{r_0},
\end{align}
As a consequence of the semiclassical Calderon-Vaillancourt theorem (see Theorem 4.23 in \cite{SemiclassicalAnalysis}), we have $\Opw{r_0} = \mathcal{O}_{L^2 \rightarrow L^2}(1)$ as $h \rightarrow 0^+$. Thus there is $0<h_0 \le 1$ such that the operator $I+h \Opw{r_0}$ is boundedly invertible on $L^2(\R^n)$ for all $0<h \le h_0$. It follows that
\begin{align} \label{inverse_to_other_side}
	(I+h\Opw{r_0})^{-1} \Opw{e_0} \Opw{\tilde{p}} = I, \ \ 0<h \le h_0.
\end{align}
By Beals' Theorem (see the discussion on page 177 of \cite{SemiclassicalAnalysis}), there is $\tilde{r}_0 \in S(1)$ such that
\begin{align} \label{Beals_lemma}
	\Opw{\tilde{r}_0} = (I+h\Opw{r_0})^{-1}
\end{align}
for all $h>0$ sufficiently small. By the composition calculus, we have
\begin{align} \label{composition_calculus_again}
	\Opw{\tilde{r}_0} \Opw{e_0} = \Opw{e_1},
\end{align}
where $e_1 = \tilde{r}_0 \# e_0 \in S(\frac{1}{m})$ is the Moyal product of $\tilde{r}_0$ and $e_0$. From (\ref{inverse_to_other_side}), (\ref{Beals_lemma}), and (\ref{composition_calculus_again}), we deduce that
\begin{align}
	\Opw{e_1} \Opw{\tilde{p}} = I.
\end{align}
Since
\begin{align}
	\Opw{\tilde{p}} = P + \Opw{\chi},
\end{align}
we have
\begin{align} \label{parametrix_applied_to_ground_state}
	u = \Opw{e_1} \Opw{\tilde{p}}u = \Opw{e_1} \Opw{\chi} u
\end{align}
for all $h>0$ sufficiently small. Let $\psi \in C^\infty_0(\R^{2n})$ be such that 
\begin{align}
	\psi \equiv 1 \ \textrm{in} \ \textrm{neigh}(\textrm{supp} \, \chi; \R^{2n}), \ \ \textrm{supp} \, \psi \subset \set{X \in \R^{2n}}{\abs{X}<\delta}.
\end{align}
Using (\ref{parametrix_applied_to_ground_state}), we get
\begin{align} \label{reproduction_of_u}
\begin{split}
	u &= \Opw{\psi} \Opw{e_1} \Opw{\chi} u + \Opw{1-\psi} \Opw{e_1} \Opw{\chi} u \\
	&= \Opw{\psi} u + \Opw{1-\psi} \Opw{e_1} \Opw{\chi} u.
\end{split}
\end{align}
Because $\textrm{supp} \, \chi$ is compact and $\textrm{supp} \, (1-\psi) \cap \textrm{supp} \, \chi = \emptyset$, it follows from the general theory that there is $r \in h^\infty \mathcal{S}(\R^n)$ such that
\begin{align} \label{definition_of_remainder}
	R:=\Opw{(1-\psi)} \Opw{e_1} \Opw{\chi} = \Opw{r}.
\end{align}
Hence
\begin{align} \label{R is regularizing}
	R = \mathcal{O}_{\mathcal{S}' \rightarrow \mathcal{S}}(h^\infty).
\end{align}
From (\ref{reproduction_of_u}) and (\ref{definition_of_remainder}), we conclude that
\begin{align}
u = \textrm{Op}^w_h(\psi) u + R u, \ \ 0 < h \le h_0.	
\end{align}
The proof of Proposition \ref{course_microlocalization_real_side} is complete. 
\end{proof}


We next explore the consequences of Proposition \ref{course_microlocalization_real_side} for any FBI transform of a ground state $u$ of $P$. Let $\C^{2n} = \C^n_\zeta \times \C^n_z$ be equipped with the standard complex symplectic form $\sigma = d\zeta \wedge dz \in \Lambda^{(2,0)}(\C^{2n})$. Let $\varphi = \varphi(z,y)$ be a holomorphic quadratic form on $\C^{2n} = \C^n_z \times \C^n_y$ such that
\begin{align} \label{FBI phase function again}
\det{\varphi''_{zy}} \neq 0, \ \ \textrm{Im} \, \varphi''_{yy}>0,	
\end{align}
and let $\mathcal{T}_\varphi: \mathcal{S}'(\R^n) \rightarrow \textrm{Hol}(\C^n)$ be its associated FBI transform introduced in (\ref{associated_FBI_transform}). Let
\begin{align} \label{associated PSH weight}
\Phi_0(z) := \textrm{max}_{y \in \R^n} (-\textrm{Im} \, \varphi(z,y)), \ \ z \in \C^n.	
\end{align}
As the maximum of a family of pluriharmonic quadratic forms on $\C^n$, the function $\Phi_0$ is a plurisubharmonic quadratic form on $\C^n$. In fact, the quadratic form $\Phi_0$ is strictly plurisubharmonic, i.e. $\Phi''_{0,\overline{z} z}>0$. See \cite{SemiclassicalAnalysis} or \cite{Minicourse} for a proof. In the sequel, we shall refer to the quadratic form $\Phi_0$ defined by (\ref{associated PSH weight}) as the {\emph{strictly plurisubharmonic weight associated to the FBI phase function $\varphi$}}. Let
\begin{align} \label{definition_of_I_Lagrangian}
\Lambda_{\Phi_0} := \set{\left(z, \frac{2}{i} \p_z \Phi_0(z) \right) \in \C^{2n}}{z \in \C^n},	
\end{align}
and let
\begin{align} \label{Bargmann space}
	H_{\Phi_0}(\C^n) = L^2(\C^n, e^{-2 \Phi_0(z)/h} \, L(dz)) \cap \textrm{Hol}(\C^n)
\end{align}
be the Bargmann space of entire functions on $\C^n$ associated to the weight $\Phi_0$. Here $L(dz)$ denotes the Lebesgue measure on $\C^n$. It is well known that $\mathcal{T}_\varphi$ is unitary $L^2(\R^n) \rightarrow H_{\Phi_0}(\C^n)$. For a proof, see Theorem 13.7 in \cite{SemiclassicalAnalysis}, Theorem 1.3.3 in \cite{Minicourse}, or Section 12.2 of \cite{Lectures_on_Resonances}.

Following the discussion in Section 12.2 of \cite{Lectures_on_Resonances}, we recall that an order function on $\Lambda_{\Phi_0}$ is a Lebesgue measurable function $\mathfrak{m}: \Lambda_{\Phi_0} \rightarrow (0,\infty)$ such that
\begin{align}
	\exists C>0, \ \exists N \in \R: \ \mathfrak{m}(Z) \le C \langle Z - W \rangle^N \mathfrak{m}(W), \ \ Z,W \in \Lambda_{\Phi_0}.
\end{align}
Given an order function $\mathfrak{m}$ on $\Lambda_{\Phi_0}$, we may introduce the symbol class $S(\Lambda_{\Phi_0}, \mathfrak{m})$ consisting of all $a \in C^\infty(\Lambda_{\Phi_0})$ such that
\begin{align}
\forall \alpha, \beta \in \N^n, \ \exists C>0: \ \abs{\p^\alpha_z \p^\beta_{\overline{z}} \left(a\left(z, \frac{2}{i} \p_z \Phi_0(z) \right) \right)} \le C \mathfrak{m} \left(z, \frac{2}{i} \p_z \Phi_0(z)\right), \ \ z \in \C^n.
\end{align}
Also, if $\mathfrak{m}$ is an order function on $\Lambda_{\Phi_0}$, then we may define the Sobolev space
\begin{align}
H_{\Phi_0, \mathfrak{m}}(\C^n) = L^2\left(\C^n, \mathfrak{m} \left(z, \frac{2}{i} \p_z \Phi_0(z)\right)^2 e^{-2 \Phi_0(z)/h} \, L(dz)\right) \cap \textrm{Hol}(\C^n),
\end{align}
which is a Hilbert space equipped with the norm
\begin{align}
	\norm{v}^2_{L^2_{\Phi_0, \mathfrak{m}}(\C^n)} = \int_{\C^n} \abs{v(z)}^2 \mathfrak{m}\left(z, \frac{2}{i} \p_z \Phi_0(z) \right)^2 e^{-2 \Phi_0(z)/h} \, L(dz).
\end{align}
Note that $H_{\Phi_0, 1}(\C^n) = H_{\Phi_0}(\C^n)$. Given a symbol $a \in S(\Lambda_{\Phi_0}, \mathfrak{m})$, where $\mathfrak{m}$ is an order function on $\Lambda_{\Phi_0}$, we define the \emph{complex Weyl quantization of $a$} formally by
\begin{align} \label{complex Weyl quantization}
	\textrm{Op}^w_{\Phi_0, h}(a) v(z) = \frac{1}{(2\pi h)^n} \iint_{\Gamma_{\Phi_0}(z)} e^{\frac{i}{h} (z-w) \cdot \zeta} a \left(\frac{z+w}{2}, \zeta \right) v(w) \, dw \wedge d\zeta,
\end{align}
for the contour of integration
\begin{align}
	\Gamma_{\Phi_0}(z): \C^n \ni w \mapsto \zeta = \frac{2}{i} \p_z \Phi_0 \left(\frac{z+w}{2} \right), \ \ z \in \C^n.
\end{align}
Initially, $\textrm{Op}^w_{\Phi_0, h}(a) v$ is defined for $v \in \textrm{Hol}(\C^n)$ such that
\begin{align}
	\forall h \in (0,1], \ \forall N>0, \ \exists C = C(N,h)>0: \ \ \abs{v(z)} \le C \langle z \rangle^{-N} e^{\Phi_0(z) / h}, \ \ z \in \C^n.
\end{align}
One may then show that for any order function $\tilde{\mathfrak{m}}$ on $\Lambda_{\Phi_0}$ the operator $\textrm{Op}^w_{\Phi_0, h}(a)$ defined by (\ref{complex Weyl quantization}) extends by density to an operator $H_{\Phi_0, \tilde{\mathfrak{m}}}(\C^n) \rightarrow H_{\Phi_0, \frac{\tilde{\mathfrak{m}}}{\mathfrak{m}}}(\C^n)$ whose norm is uniformly bounded with respect to $h$, i.e.
\begin{align}
	\textrm{Op}^w_{\Phi_0, h}(a) = \mathcal{O}(1): H_{\Phi_0, \tilde{\mathfrak{m}}}(\C^n) \rightarrow H_{\Phi_0, \frac{\tilde{\mathfrak{m}}}{\mathfrak{m}}}(\C^n), \ \ h \rightarrow 0^+.
\end{align}
For a proof, see Proposition 12.6 in \cite{Lectures_on_Resonances}.

We next review the relationship between the complex Weyl quantization and the ordinary Weyl quantization on $\R^n$. Let $\varphi$ be an FBI phase function on $\C^{2n}$ and let $\Phi_0$ be the strictly plurisubharmonic quadratic form on $\C^n$ associated to $\varphi$. The FBI phase function $\varphi$ generates a complex linear canonical transformation $\kappa_\varphi: \C^{2n} \rightarrow \C^{2n}$ implicitly by
\begin{align} \label{associated canonical transformation}
	\kappa_\varphi: (y, -\varphi'_y(z,y)) \mapsto (z, \varphi'_z(z,y)), \ \ (z,y) \in \C^{2n}.
\end{align}
In the sequel, we refer to $\kappa_\varphi$ as the \emph{complex linear canonical transformation generated by $\varphi$} or as the \emph{complex linear canonical transformation associated to $\varphi$}. From either Theorem 13.5 of \cite{SemiclassicalAnalysis} or Proposition 1.3.2 of \cite{Minicourse}, we know that
\begin{align}
	\kappa_{\varphi}(\R^{2n}) = \Lambda_{\Phi_0},
\end{align}
and we have a version of Egorov's theorem. Namely, if $m$ is any order function on $\R^{2n}$ and $a \in S(m)$, then
\begin{align}
	\mathfrak{a} := a \circ \kappa_{\varphi}^{-1} \in S(\Lambda_{\Phi_0}, \mathfrak{m}),
\end{align}
where $\mathfrak{m}$ is the order function on $\Lambda_{\Phi_0}$ given by
\begin{align}
	\mathfrak{m} = m \circ \kappa_{\varphi}^{-1},
\end{align}
and we have
\begin{align}
	\textrm{Op}^w_h(a) = \mathcal{T}_\varphi^* \circ \textrm{Op}^w_{\Phi_0, h}(\mathfrak{a}) \circ \mathcal{T}_\varphi,
\end{align}
where both sides are viewed as operators on $\mathcal{S}'(\R^n)$. For a proof, see Section 12.2 of \cite{Lectures_on_Resonances}.

Let $P$ and $u$ be as in the statement of Proposition \ref{course_microlocalization_real_side}, and let $\varphi$ be an FBI phase function on $\C^{2n}$ with associated FBI transform $\mathcal{T}_\varphi$ and strictly plurisubharmonic weight $\Phi_0$. In view of Proposition \ref{course_microlocalization_real_side}, it is natural to expect that the mass of the entire function $\mathcal{T}_\varphi u$ will be concentrated near $0 \in \C^n$. Equivalently, we expect that the mass of $\mathcal{T}_\varphi u$ in the complement of any fixed neighborhood of $0$ in $\C^n$ will be semiclassically negligible, i.e. $\mathcal{O}(h^\infty)$. The following proposition solidifies this intuition.


\begin{proposition} \label{localization_lemma_FBI_side}
Let $P$ and $u \in L^2(\R^n)$ be as in the statement of Proposition \ref{course_microlocalization_real_side}, and let $\varphi$ be any FBI phase function on $\C^{2n}$ with associated FBI transform $\mathcal{T}_\varphi$ and associated strictly plurisubharmonic weight $\Phi_0$. For any $\delta>0$, there is $0<h_0 \le 1$ such that for any $1 \le p \le \infty$ we have
\begin{align} \label{mass_away_from_0_is_negligible}
	\norm{\FBI u(z) e^{-\Phi_0(z)/h}}_{L^p(\{\abs{z} \ge \delta\}, L(dz))} \le \mathcal{O}_N(1) h^N, \ \ 0< h \le h_0,
\end{align}
for any $N \in \N$.
\end{proposition}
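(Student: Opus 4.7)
The plan is to combine Proposition \ref{course_microlocalization_real_side} with the pseudolocal property of the complex Weyl quantization with respect to the weight $\Phi_0$. First, apply Proposition \ref{course_microlocalization_real_side} with a cutoff $\psi \in C^\infty_0(\R^{2n})$ supported in $\{X \in \R^{2n} : \abs{X} < \delta'\}$, for a small parameter $\delta' > 0$ to be chosen below; this yields the decomposition
\begin{align*}
	u = \Opw{\psi} u + Ru, \qquad R = \mathcal{O}_{\mathcal{S}' \rightarrow \mathcal{S}}(h^\infty).
\end{align*}
Apply $\mathcal{T}_\varphi$ to both sides and invoke the Egorov-type identity recalled just after (\ref{associated canonical transformation}) to obtain
\begin{align*}
	\mathcal{T}_\varphi u = \textrm{Op}^w_{\Phi_0, h}(\psi \circ \kappa_\varphi^{-1})\, \mathcal{T}_\varphi u + \mathcal{T}_\varphi Ru.
\end{align*}
The symbol $\psi \circ \kappa_\varphi^{-1}$ belongs to $C^\infty_0(\Lambda_{\Phi_0})$. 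Since $\kappa_\varphi^\flat = \pi_1 \circ \kappa_\varphi|_{\R^{2n}}$ is an $\R$-linear isomorphism $\R^{2n} \rightarrow \C^n$, by choosing $\delta' > 0$ small enough depending on $\delta$ and $\varphi$, the projected support $\pi_1(\textrm{supp}(\psi \circ \kappa_\varphi^{-1}))$ is contained in $\{z \in \C^n : \abs{z} < \delta/2\}$.

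The remainder term $\mathcal{T}_\varphi Ru$ is easy to control. Since $\norm{u}_{L^2(\R^n)} = 1$ and $R = \mathcal{O}_{\mathcal{S}' \rightarrow \mathcal{S}}(h^\infty)$, every Schwartz seminorm of $Ru$ is of order $h^\infty$. Combining this with the integral formula (\ref{associated_FBI_transform}), the identity $\Phi_0(z) = \max_{y \in \R^n}(-\textrm{Im} \, \varphi(z,y))$, and the strict positivity $\textrm{Im} \, \varphi''_{yy} > 0$, a routine steepest-descent computation in $y$ about the affinely $z$-dependent critical point of $y \mapsto -\textrm{Im} \, \varphi(z,y)$ yields the uniform pointwise estimate
\begin{align*}
	\abs{\mathcal{T}_\varphi Ru(z)}\, e^{-\Phi_0(z)/h} \le C_N h^N \langle z \rangle^{-N}, \qquad z \in \C^n,
\end{align*}
for every $N \in \N$. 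This gives an $\mathcal{O}(h^\infty)$ bound for $\mathcal{T}_\varphi Ru \cdot e^{-\Phi_0/h}$ in every $L^p$-norm on $\C^n$.

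For the main term $\textrm{Op}^w_{\Phi_0, h}(\psi \circ \kappa_\varphi^{-1}) \mathcal{T}_\varphi u$, the essential input is the pseudolocal property of the complex Weyl quantization. The claim I would establish is that, for any $\chi \in C^\infty_0(\Lambda_{\Phi_0})$ with $\pi_1(\textrm{supp}\, \chi) \subset \{\abs{z} < \delta/2\}$, for every $N \in \N$ and every $1 \le p \le \infty$,
\begin{align*}
	\norm{\mathbf{1}_{\{\abs{z} \ge \delta\}} \cdot \textrm{Op}^w_{\Phi_0, h}(\chi) v \cdot e^{-\Phi_0/h}}_{L^p(\C^n,\, L(dz))} \le C_{N,p}\, h^N \norm{v}_{H_{\Phi_0}(\C^n)}.
\end{align*}
To prove this, parametrize the contour $\Gamma_{\Phi_0}(z)$ in (\ref{complex Weyl quantization}) by $w \in \C^n$ via $\zeta(w) = \frac{2}{i} \p_z \Phi_0((z+w)/2)$. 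For $\abs{z} \ge \delta$, the support condition $(z+w)/2 \in \pi_1(\textrm{supp}\, \chi)$ forces $\abs{z+w} < \delta$, so $\abs{z-w} = \abs{2z - (z+w)} \ge 2\delta - \delta = \delta$. Combining the oscillatory factor $\frac{i}{h}(z-w) \cdot \zeta(w)$ with the weight contributions $e^{-\Phi_0(z)/h}$ on the left and $e^{\Phi_0(w)/h}$ coming from $v \in H_{\Phi_0}(\C^n)$, and using strict plurisubharmonicity of the quadratic form $\Phi_0$, one finds that the effective weighted kernel is bounded by $C h^{-n} e^{-c\abs{z-w}^2/h}$ for some $c > 0$. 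Since $\abs{z-w} \ge \delta$ on the integration domain, this factor is $\mathcal{O}(h^\infty)$, and a Schur-type estimate against $v \in H_{\Phi_0}(\C^n)$ of unit norm yields the claimed $L^p$-bound uniformly in $p$. Applying this to $\chi = \psi \circ \kappa_\varphi^{-1}$ and $v = \mathcal{T}_\varphi u$, together with the remainder bound, establishes (\ref{mass_away_from_0_is_negligible}).

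The principal technical obstacle is the weighted pseudolocal kernel estimate itself. Although classical in the theory of FBI / Bargmann transforms with quadratic plurisubharmonic weights (compare the discussion in \cite{Minicourse} and \cite{Lectures_on_Resonances}), it requires careful bookkeeping with the contour, phase, and weight factors; once it is in hand, the rest of the argument is routine.
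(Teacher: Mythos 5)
The overall plan — apply Proposition \ref{course_microlocalization_real_side} with a suitably small cutoff $\psi$, conjugate by $\mathcal{T}_\varphi$ via Egorov, and estimate the two resulting terms — is exactly the strategy the paper uses, and your treatment of the remainder $\mathcal{T}_\varphi Ru$ is correct. However, the central step for the main term contains a genuine error. You assert that on the contour $\Gamma_{\Phi_0}(z)$ the phase and weight factors combine to produce a Gaussian kernel
\begin{align*}
\abs{\textrm{Op}^w_{\Phi_0,h}(\chi)v(z)}\,e^{-\Phi_0(z)/h} \le Ch^{-n}\int e^{-c\abs{z-w}^2/h}\abs{v(w)}\,e^{-\Phi_0(w)/h}\,L(dw),
\end{align*}
citing strict plurisubharmonicity of $\Phi_0$ as the mechanism. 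This is not correct. On the natural contour $\zeta=\frac{2}{i}\partial_z\Phi_0\bigl(\frac{z+w}{2}\bigr)$ one has the \emph{exact} cancellation $\textrm{Re}\,[i(z-w)\cdot\zeta]=\Phi_0(z)-\Phi_0(w)$ (Lemma 13.1 of \cite{SemiclassicalAnalysis}, quoted in the paper as (\ref{identity_for_real_part_of_phase})), so the phase and weight contributions annihilate and the effective weighted kernel is merely $\mathcal{O}(h^{-n}\abs{\chi})$, with no decay at all in $\abs{z-w}$. The Gaussian decay $e^{-c\abs{z-w}^2/h}$ you are invoking is a feature of the \emph{Bergman projector} kernel $e^{2\Psi_0(z,\overline{w})/h}e^{-2\Phi_0(w)/h}$ (via the fundamental estimate $2\textrm{Re}\,\Psi_0(z,\overline{w})-\Phi_0(z)-\Phi_0(w)\asymp -\abs{z-w}^2$), and it does not transfer to the Weyl kernel of a non-holomorphic symbol $\chi\in C^\infty_0(\Lambda_{\Phi_0})$ on the natural contour.

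To obtain off-diagonal decay for the Weyl operator applied to a $C^\infty_0$ symbol, one must instead argue by non-stationary phase, i.e.\ repeated integration by parts with an operator such as
\begin{align*}
L = \frac{1+(\partial^2_{z\overline{z}}\Phi_0)^{-1}\overline{(z-w)}\cdot\partial_{\overline{w}}}{1+\abs{z-w}^2/h},
\end{align*}
which fixes the exponential $e^{i(z-w)\cdot\zeta/h}$ on the contour. This yields the weaker but still sufficient decay $(L^T)^N\tilde{\chi}=\mathcal{O}\bigl(\langle h^{-1/2}(z-w)\rangle^{-N}\bigr)$ for every $N$; combined with your support observation $\abs{z-w}\ge\delta$ when $\abs{z}\ge\delta$ and $\abs{z+w}<\delta$, this is enough to give $\mathcal{O}(h^\infty)$ in any $L^p$. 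Concretely: once the Gaussian is replaced by $\langle h^{-1/2}(z-w)\rangle^{-N}$ and one observes that $\abs{z}\le\abs{z-w}$ on the effective support, the $p=1$ estimate follows by change of variables and Cauchy--Schwarz, and the $p=\infty$ estimate follows either pointwise from the same kernel bound or, as the paper does, by composing with the Bergman projector and invoking the already established $p=1$ case; $1<p<\infty$ then follows by H\"older. So the decomposition is right, but the mechanism you cite to close the argument is false as stated and must be replaced by the integration-by-parts step.
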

\begin{proof}
By H\"{o}lder's inequality, it suffices to prove the proposition in the cases $p=1$ and $p=\infty$. We begin by proving (\ref{mass_away_from_0_is_negligible}) in the case when $p=1$. Let $\delta>0$ be given, let $0<\rho<\delta/2$, let $\Lambda_{\Phi_0}$ be as in (\ref{definition_of_I_Lagrangian}), let $\kappa_\varphi: \C^{2n} \rightarrow \C^{2n}$ be the complex linear canonical transformation generated by $\varphi$, let $\pi_1: \C^{2n} \rightarrow \C^n$ be projection onto the first factor $\pi_1: (z,\zeta) \mapsto z$, and let $\kappa^\flat_\varphi := \pi_1 \circ \left. \kappa_{\varphi} \right|_{\R^{2n}}: \R^{2n} \rightarrow \C^n$. By Proposition \ref{course_microlocalization_real_side}, there is $\psi \in C^\infty_0(\R^{2n})$ such that 
\begin{align} \label{size_of_support_of_cutoff}
	\textrm{supp}\left[\psi \circ \left(\kappa^\flat_\varphi\right)^{-1}\right] \subset \set{z \in \C^n}{\abs{z} < \rho},
\end{align}
and $R = \mathcal{O}_{\mathcal{S}' \rightarrow \mathcal{S}}(h^\infty)$ such that
\begin{align} \label{mention previous line}
	u = \textrm{Op}^w_h(\psi) u + R u, \ \ 0 < h \le h_0,
\end{align}
for some $0< h_0 \le 1$. By Egorov's theorem and (\ref{mention previous line}),
\begin{align}
\mathcal{T}_\varphi u = \textrm{Op}^w_{\Phi_0, h} (\tilde{\psi}) \mathcal{T}_\varphi u + \mathcal{T}_\varphi (R u),	
\end{align}
where
\begin{align}
\tilde{\psi} := \psi \circ \kappa_{\varphi}^{-1} \in C^\infty_0(\Lambda_{\Phi_0}).	
\end{align}
Thus
\begin{align} \label{where_we_introduce_I_and_II}
	\int_{\abs{z} \ge \delta} \abs{\FBI u(z)} e^{-\Phi_0(z)/h} \, L(dz) \le {\rm I}(h) + {\rm II}(h),
\end{align}
where
\begin{align} \label{first_definition_of_I}
	{\rm I}(h):=\int_{\abs{z} \ge \delta} \abs{\textrm{Op}^w_{\Phi_0, h}(\tilde{\psi}) \FBI u(z)} e^{-\Phi_0(z)/h} \, L(dz)
\end{align}
and
\begin{align} \label{first_definition_of_II}
	{\rm II}(h) := \int_{\abs{z} \ge \delta} \abs{\mathcal{T}_\varphi (R u)(z)} e^{-\Phi_0(z)/h} \, L(dz). 
\end{align}
We first consider $II(h)$. For $J \in \N$, let $\norm{\cdot}_{J}$ denote the Schwartz seminorm
\begin{align}
	\norm{f}_J := \sum_{\abs{\alpha+\beta} \le J} \norm{x^\alpha \p^\beta f}_{L^\infty(\R^n)}, \ \ f \in \mathcal{S}(\R^n).
\end{align}
From Theorem 13.4 in \cite{SemiclassicalAnalysis}, we know that for every $N>0$ there exists $J \in \N$ and $C>0$ such that
\begin{align}
	\abs{\mathcal{T}_\varphi Ru(z)} \le C \norm{Ru}_{J} h^{-n/4} \langle z \rangle^{-N} e^{\Phi_0(z)/h}, \ \ z \in \C^n, \ \ 0< h \le 1.
\end{align}
Since $R = \mathcal{O}_{\mathcal{S}' \rightarrow \mathcal{S}}(h^\infty)$ and $\norm{u}_{L^2(\R^n)} = 1$, we conclude that for every $M, N>0$, there is $C = C(M,N)>0$ such that
\begin{align} \label{good estimate}
	\abs{\mathcal{T}_\varphi R u(z)} \le C h^M \langle z \rangle^{-N} e^{\Phi_0(z)/h}, \ \ z \in \C^n, \ \  0<h \le h_0.
\end{align}
In particular, for any $M>0$ there is $C>0$ such that
\begin{align} \label{estimate specific choice}
	\abs{\mathcal{T}_\varphi Ru(z)} \le C h^M \langle z \rangle^{-2n-1} e^{\Phi_0(z)/h}, \ \ z \in \C^n, \ \ 0<h \le h_0.
\end{align}
Bounding (\ref{first_definition_of_II}) using (\ref{estimate specific choice}) gives that
\begin{align}
	{\rm II}(h) = \mathcal{O}(h^M)
\end{align}
for any $M>0$. Therefore
\begin{align} \label{II(h) is negligible}
	{\rm II}(h) = \mathcal{O}(h^\infty).
\end{align}

It remains to show that ${\rm I}(h) = \mathcal{O}(h^\infty)$. From the definition (\ref{complex Weyl quantization}) of the complex Weyl quantization, we have
\begin{align} \label{first_integral_to_consider}
	\textrm{Op}^w_{\Phi_0, h}(\tilde{\psi}) \FBI u(z) = \frac{1}{(2\pi h)^n} \iint_{\Gamma_{\Phi_0}(z)} e^{\frac{i}{h}(z-w) \cdot \zeta} \tilde{\psi} \left(\frac{z+w}{2}, \zeta \right) \mathcal{T}_\varphi u(w) \, dw \wedge d\zeta,
\end{align}
where the contour of integration $\Gamma_{\Phi_0}(z)$ is given by
\begin{align} \label{def_Gamma_phi}
	\Gamma_{\Phi_0}(z): \C^n \ni w \mapsto \zeta = \frac{2}{i} \p_z \Phi_0 \left(\frac{z+w}{2} \right), \ \ z \in \C^n.
\end{align}
Let
\begin{align}
	L = \frac{1+(\p^2_{z\overline{z}} \Phi_0)^{-1}\overline{(z-w)} \cdot \p_{\overline{w}}}{1+\abs{z-w}^2/h}, \ \ z, w \in \C^n.
\end{align}
Observe that
\begin{align}
	L e^{\frac{i}{h} (z-w) \cdot \zeta} = e^{\frac{i}{h}(z-w)}, \ \ z, w \in \C^n, \ \ \zeta = \frac{2}{i}\p_z \Phi_0 \left(\frac{z+w}{2} \right).
\end{align}
Thus for any $z \in \C^n$ such that $\abs{z} \ge \delta$ and any $N \in \N$ we have
\begin{align} \label{repeated_integration_by_parts_first_time}
	\textrm{Op}^w_{\Phi_0, h}(\tilde{\psi}) \FBI u(z) = \frac{1}{(2\pi h)^n} \iint_{\Gamma_{\Phi_0}(z)} e^{\frac{i}{h}(z-w) \cdot \zeta} (L^T)^N \tilde{\psi} \left(\frac{z+w}{2}, \zeta \right) \mathcal{T}_\varphi u(w) \, dw \wedge d\zeta,
\end{align}
where $L^T$ denotes the transpose of the differential operator $L$. Since $\tilde{\psi} \in C^\infty_0(\Lambda_{\Phi_0})$,
\begin{align} \label{estimate for transpose}
	(L^T)^N \tilde{\psi} \left(\frac{z+w}{2} \right) = \mathcal{O} \left(\langle h^{-1/2}(z-w)\rangle^{-N}\right), \ \ z,w \in \C^n, \ \ 0<h \le 1,
\end{align}
for any $N \in N$. Now, it is true that
\begin{align} \label{identity_for_real_part_of_phase}
	\textrm{Re} \, \left[i(z-w)\cdot \zeta\right] = \Phi_0(z)-\Phi_0(w), \ \ z, w \in \C^n, \ \zeta = \frac{2}{i} \p_z \Phi_0 \left(\frac{z+w}{2}\right).
\end{align}
For a proof, see Lemma 13.1 of \cite{SemiclassicalAnalysis}. From (\ref{first_definition_of_I}), (\ref{repeated_integration_by_parts_first_time}), (\ref{estimate for transpose}), and (\ref{identity_for_real_part_of_phase}), it follows that for any $N \in \N$ there is $C>0$ such that
\begin{align} \label{first_bound_for_I(h)}
	{\rm I}(h) \le Ch^{-n}\int_{\abs{z} \ge \delta} \int_{\abs{\frac{z+w}{2}} \le \rho} \langle h^{-1/2}(z-w) \rangle^{-N} \abs{\FBI u(w)} e^{-\Phi_0(w)/h} \, L(dw) \, L(dz).
\end{align}
for all $0< h \le h_0$. For any $z, w \in \C^n$ with $\abs{z} \ge \delta$, we have
\begin{align}
\begin{split}
	\abs{\frac{z+w}{2}} \le \rho \implies \abs{z} - \frac{\abs{z-w}}{2} \le \rho \implies 2 \abs{z} - \delta \le \abs{z-w} \implies \abs{z} \le \abs{z-w}.
\end{split}
\end{align}
Thus, from (\ref{first_bound_for_I(h)}) we may deduce that for any $N \in \N$ there is $C>0$ such that
\begin{align} \label{almost finished bound}
	{\rm I}(h) \le Ch^{-n+N/2} \int_{\abs{z} \ge \delta} \int_{\abs{\frac{z+w}{2}} \le \rho} \abs{z}^{-N} \abs{\FBI u(w)} e^{-\Phi_0(w)/h} \, L(dw) \, L(dz)
\end{align}
for all $0< h \le h_0$. Making the change of variables
\begin{align}
	\tilde{z} &= z, \ \ \tilde{w} = \frac{z+w}{2}
\end{align}
in (\ref{almost finished bound}) and applying Fubini's theorem and the Cauchy-Schwarz inequality yields
\begin{align}
	\begin{split}
	{\rm I}(h) &\le C h^{-n+N/2} \int_{\abs{\tilde{z}} \ge \delta} \int_{\abs{\tilde{w}} \le \rho} \abs{\tilde{z}}^{-N} \abs{\mathcal{T}_\varphi u(2\tilde{w}-\tilde{z})} e^{-\frac{\Phi_0(2\tilde{w}-\tilde{z})}{h}} \, L(d\tilde{w}) \, L(d\tilde{z}) \\
	&\le C \norm{u}_{H_{\Phi_0}(\C^n)}h^{-n+N/2} \int_{\abs{\tilde{z}} \ge \delta} \abs{\tilde{z}}^{-N} \, L(d\tilde{z}) \\
	&\le C h^{-n + N/2}, \ \ 0< h \le h_0, 
	\end{split}
\end{align}
for any $N > 2n$. Thus
\begin{align} \label{I(h)_is_negligible}
	{\rm I}(h) = \mathcal{O}(h^\infty).
\end{align}
From (\ref{where_we_introduce_I_and_II}), (\ref{I(h)_is_negligible}), and (\ref{II(h) is negligible}), we conclude
\begin{align} \label{first_estimate_in_proof}
	\int_{\abs{z} \ge \delta} \abs{\mathcal{T}_\varphi u(z)} e^{-\Phi_0(z)/h} \, L(dz) = \mathcal{O}(h^\infty).
\end{align}
Therefore the proposition is true when $p=1$.

Next, we show that (\ref{mass_away_from_0_is_negligible}) holds when $p=\infty$. We recall that the orthogonal projection $\Pi_{\Phi_0}: L^2_{\Phi_0}(\C^n) \rightarrow H_{\Phi_0}(\C^n)$ is given by
\begin{align} \label{definition_Bergman_projector}
	\Pi_{\Phi_0} v(z) = C_{\Phi_0} h^{-n} \int_{\C^n} e^{\frac{2}{h} \Psi_0(z,\overline{w})} v(w) e^{-\frac{2}{h} \Phi_0(w)} \, L(dw), \ \ v \in L^2_{\Phi_0}(\C^n),
\end{align}
where
\begin{align}
	C_{\Phi_0} = \left(\frac{2}{\pi} \right)^n \det{\p^2_{z \overline{z}} \Phi_0},
\end{align}
and $\Psi_0(\cdot, \cdot)$ is the polarization of $\Phi_0$, i.e. $\Psi_0(\cdot, \cdot)$ is the unique holomorphic quadratic form on $\C^{2n}$ such that
\begin{align}
	\Psi_0(z,\overline{z}) = \Phi_0(z), \ \ z \in \C^n,
\end{align}
In the literature, $\Pi_{\Phi_0}$ is known as the \emph{Bergman projector} associated to the strictly plurisubharmonic weight $\Phi_0$. For a proof that the Bergman projector associated to $\Phi_0$ has the integral representation (\ref{definition_Bergman_projector}), we refer the reader to either Theorem 13.6 of \cite{SemiclassicalAnalysis}, Proposition 1.3.4 of \cite{Minicourse}, or Section 12.2 of \cite{Lectures_on_Resonances}. In particular, the real part of the polarization $\Psi_0(\cdot, \cdot)$ satisfies the `fundamental estimate'
\begin{align} \label{fundamental_estimate}
	2 \textrm{Re} \, \Psi_0(z,\overline{w}) - \Phi_0(z) - \Phi_0(w) \asymp -\abs{z-w}^2.
\end{align}
From (\ref{fundamental_estimate}) and the identity $\mathcal{T}_\varphi u = \Pi_{\Phi_0} \mathcal{T}_\varphi u$, it follows that
\begin{align} \label{L^infty_estimate}
	\abs{\FBI u(z) e^{-\Phi_0(z)/h}} \le C_{\Phi_0} h^{-n} \int_{\C^n} e^{-c \abs{z-w}^2/h} \abs{\FBI u(w) e^{-\Phi_0(w)/h}} \, L(dw), \ \ z \in \C^n, \ \ 0< h \le 1.
\end{align}
Let $\delta>0$ be arbitary. For $\abs{z} \ge \delta$, we have
\begin{align} \label{first pointwise bound}
\begin{split}
	\abs{\FBI u(z) e^{-\Phi_0(z)/h}} &\le C_{\Phi_0} h^{-n} \int_{\abs{w} < \delta / 2} e^{-c \delta^2 / 4h} \abs{\FBI u(w) e^{-\Phi_0(w)/h}} \, L(dw) \\
	& \ \ \ + \mathcal{O}(1) h^{-n} \norm{\FBI u(z) e^{-\Phi_0(z)/h}}_{L^1(\{\abs{z} \ge \delta/2\}, L(dz))}.
\end{split}
\end{align}
By the Cauchy-Schwarz inequality,
\begin{align} \label{helper1}
	C_{\Phi_0} h^{-n} \int_{\abs{w} < \delta / 2} e^{-c \delta^2 / 4h} \abs{\FBI u(w) e^{-\Phi_0(w)/h}} \, L(dw) \le \mathcal{O}(1) h^{-n} e^{-c /h} \le \mathcal{O}(1)h^N, \ \ 0< h \le 1,
\end{align}
for any $N \in \N$. Also, since the proposition has already been proven in the case $p=1$, we know that there is $0<h_0 \le 1$ such that
\begin{align} \label{helper2}
	\norm{\FBI u(z) e^{-\Phi_0(z)/h}}_{L^1(\{\abs{z} \ge \delta/2\}, L(dz))} \le \mathcal{O}(1) h^{N}, \ \ 0< h \le h_0,
\end{align}
for any $N \in \N$. From (\ref{first pointwise bound}), (\ref{helper1}), and (\ref{helper2}), we conclude that there is $0<h_0 \le 1$ such that
\begin{align}
	\norm{\FBI u(z) e^{-\Phi_0(z)/h}}_{L^\infty(\{\abs{z} \ge \delta \}, L(dz))} \le \mathcal{O}(1) h^{N}, \ \ 0< h \le h_0,
\end{align}
for all $N \in \N$. Thus (\ref{mass_away_from_0_is_negligible}) holds when $p=\infty$.
\end{proof}

Using Proposition \ref{localization_lemma_FBI_side} and the unitarity of $\mathcal{T}_\varphi$, we can establish a simple upper bound for $\norm{u}_{L^p(\R^n)}$ in terms of the $L^1$-norm of $\mathcal{T}_\varphi u(z) e^{-\Phi_0(z)/h}$ with respect to the Lebesgue measure $L(dz)$ over any bounded neighborhood of $0$ in $\C^n$. 


\begin{proposition} \label{first_bound_Lp_norm}
	Let $u \in L^2(\R^n)$ be as in the statement of Proposition \ref{course_microlocalization_real_side}. Let $\varphi$ be any FBI phase function on $\C^{2n}$ with associated FBI transform $\mathcal{T}_\varphi$ and associated strictly plurisubharmonic weight $\Phi_0$. For any $\delta>0$ and $N \in \N$, there is $0< h_0 \le 1$ such that for any $1 \le p \le \infty$ we have
	\begin{align}
		\norm{u}_{L^p(\R^n)} \le C h^{\frac{n}{2p}-\frac{3n}{4}} \int_{\abs{z} < \delta} \abs{\mathcal{T}_\varphi u(z)} e^{-\Phi_0(z)/h} \, L(dz) + Ch^N, \ \ 0<h\le h_0,
	\end{align}
	for some constant $C = C(\delta, N, p)>0$.
\end{proposition}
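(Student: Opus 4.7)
The plan is to start from the unitary inversion identity $u = \mathcal{T}_\varphi^* \mathcal{T}_\varphi u$. Writing out the Schwartz kernel of $\mathcal{T}_\varphi^*$ with respect to the weighted measure $e^{-2\Phi_0(z)/h} L(dz)$, we obtain the representation
\begin{align*}
	u(y) = \overline{c_\varphi}\, h^{-3n/4} \int_{\C^n} e^{-i\overline{\varphi(z,y)}/h}\, \mathcal{T}_\varphi u(z)\, e^{-2\Phi_0(z)/h}\, L(dz), \quad y \in \R^n.
\end{align*}
We split this integral into the pieces over $\{|z| < \delta\}$ and $\{|z| \ge \delta\}$, and estimate each in $L^p(\R^n_y)$.

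For the main piece, we apply Minkowski's integral inequality to bring the $L^p_y$-norm inside the $z$-integral. The pointwise identity $|e^{-i\overline{\varphi(z,y)}/h}| = e^{-\mathrm{Im}\,\varphi(z,y)/h}$ and the definition of $\Phi_0$ in (\ref{associated PSH weight}) show that the function $\Psi(z,y) := \Phi_0(z) + \mathrm{Im}\,\varphi(z,y)$ is non-negative, and for fixed $z$ it is a quadratic polynomial in $y \in \R^n$ whose Hessian is the positive-definite matrix $\mathrm{Im}\,\varphi''_{yy}$ (independent of $z$). Completing the square and computing the resulting Gaussian integral yields
\begin{align*}
	\left\| e^{-\Psi(z,\cdot)/h} \right\|_{L^p(\R^n_y)} = C(p,\varphi)\, h^{n/(2p)}, \quad 0 < h \le 1,
\end{align*}
uniformly in $z \in \C^n$, with $C(p,\varphi)$ bounded for $1 \le p \le \infty$ (interpreting $h^{n/(2p)} = 1$ at $p = \infty$). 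Multiplying by the factor $e^{-\Phi_0(z)/h}$ that remains and inserting, we arrive at the global estimate
\begin{align*}
	\|u\|_{L^p(\R^n)} \le C\, h^{\frac{n}{2p} - \frac{3n}{4}} \int_{\C^n} |\mathcal{T}_\varphi u(z)|\, e^{-\Phi_0(z)/h}\, L(dz), \quad 1 \le p \le \infty.
\end{align*}

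To replace the integral over all of $\C^n$ with the integral over $\{|z| < \delta\}$, we invoke Proposition \ref{localization_lemma_FBI_side} in the case $p = 1$, which yields, for every $M \in \N$, an $h_0 > 0$ such that
\begin{align*}
	\int_{|z| \ge \delta} |\mathcal{T}_\varphi u(z)|\, e^{-\Phi_0(z)/h}\, L(dz) \le C_M\, h^M, \quad 0 < h \le h_0.
\end{align*}
Taking $M$ large enough, the contribution of $\{|z| \ge \delta\}$ to the bound on $\|u\|_{L^p(\R^n)}$ is of order $h^{n/(2p) - 3n/4 + M} = \mathcal{O}(h^N)$ for any prescribed $N \in \N$, and may be absorbed into the error term $Ch^N$ in the statement. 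This completes the proof. The only mildly delicate point is the uniformity in $p$ of the Gaussian constant $C(p,\varphi)$, but this is immediate from the explicit value $C(p,\varphi) = p^{-n/(2p)} (\det \mathrm{Im}\,\varphi''_{yy})^{-1/(2p)} \cdot (2\pi)^{n/(2p)}$, which remains bounded on the closed interval $[1,\infty]$.
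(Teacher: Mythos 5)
Your proof is correct and follows essentially the same route as the paper: the unitary inversion $u = \mathcal{T}_\varphi^*\mathcal{T}_\varphi u$, the observation that $\Phi_0(z)+\mathrm{Im}\,\varphi(z,\cdot)$ is a non-negative quadratic polynomial in $y$ with positive-definite Hessian $\mathrm{Im}\,\varphi''_{yy}$, Minkowski's integral inequality together with the explicit Gaussian $L^p$-computation, and localization of the $z$-integral via Proposition \ref{localization_lemma_FBI_side}. One minor point worth adding is that, before invoking Minkowski, one should verify absolute convergence of the iterated integral; the paper does this by noting that Proposition \ref{course_microlocalization_real_side} implies $u \in \mathcal{S}(\R^n)$ for small $h$, so that $\mathcal{T}_\varphi u(z)\,e^{-\Phi_0(z)/h} = \mathcal{O}(\langle z\rangle^{-N})$ for every $N$.
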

\begin{proof}
Let $\varphi$ be an FBI phase function on $\C^{2n}$ with associated FBI transform $\mathcal{T}_\varphi$ and strictly plurisubharmonic weight $\Phi_0$. Let $H_{\Phi_0}(\C^n)$ be the Bargmann space introduced in (\ref{Bargmann space}). Let $0< h_0 \le 1$ be small enough so that the conclusions of Proposition \ref{course_microlocalization_real_side} and Proposition \ref{localization_lemma_FBI_side} hold. Since $\mathcal{T}_\varphi: L^2(\R^n) \rightarrow H_{\Phi_0}(\C^n)$ is unitary, we have
\begin{align} \label{u_rewritten_adjoint_FBI}
	u(x) = \mathcal{T}_\varphi^* \mathcal{T}_\varphi u(x) = c_\varphi^2 h^{-3n/4} \int_{\C^n} e^{-\frac{i}{h} \overline{\varphi(z,x)}} \mathcal{T}_\varphi u(z) e^{-\frac{2}{h} \Phi_0(z)} \, L(dz), \ \ x \in \R^n.
\end{align}
Because $\varphi$ is quadratic and $\textrm{Im} \, \varphi''_{yy} > 0$, for every $z \in \C^n$ there is a unique $x(z) \in \R^n$ such that
\begin{align} \label{maximum achieved}
	\Phi_0(z) = - \textrm{Im} \, \varphi(z, x(z)).
\end{align}
Taking the absolute value of (\ref{u_rewritten_adjoint_FBI}) and using (\ref{FBI phase function again}), (\ref{associated PSH weight}) and (\ref{maximum achieved}), we find that there are constants $C,c>0$ and $0< h_0 \le 1$ such that
\begin{align} \label{absolute_value_u}
	\abs{u(x)} \le C h^{-3n/4} \int_{\C^n} e^{-\frac{c}{h} \abs{x-x(z)}^2} \abs{\mathcal{T}_\varphi u(z)} e^{-\Phi_0(z)/h} \, L(dz), \ \ x \in \R^n,
\end{align}
for all $0< h \le h_0$. Note that the righthand side of (\ref{absolute_value_u}) is finite for every $x \in \R^n$ since Proposition \ref{course_microlocalization_real_side} implies that $u \in \mathcal{S}(\R^n)$ for every $0< h \le h_0$ and hence $\mathcal{T}_\varphi u(z) e^{-\Phi_0(z)/h} = \mathcal{O}(h^{-n/4} \langle z \rangle^{-N})$ for any $N>0$ by Theorem 13.4 of \cite{SemiclassicalAnalysis}. By direct calculation, we have
\begin{align} \label{L^p estimate gaussian}
	\norm{e^{-\frac{c}{h} \abs{\cdot}^2}}_{L^p(\R^n)} = \mathcal{O}(h^{\frac{n}{2p}}),
\end{align}
with the convention that $h^{\frac{n}{2p}} = 1$ when $p=\infty$. Taking the $L^p$-norm on both sides of (\ref{absolute_value_u}), applying Minkowski's integral inequality, and using (\ref{L^p estimate gaussian}) gives
\begin{align}
	\norm{u}_{L^p(\R^n)} \le C h^{\frac{n}{2p}-\frac{3n}{4}} \int_{\C^n} \abs{\mathcal{T}_\varphi u(z)} e^{-\Phi_0(z)/h} \, L(dz), \ \ 0< h \le h_0,
\end{align}
for some $C>0$. Let $\delta>0$ be arbitrary. By Proposition \ref{localization_lemma_FBI_side}, for every $N>0$, there is $C>0$ and $0< h_0 \le 1$ such that
\begin{align}
	\int_{\C^n} \abs{\mathcal{T}_\varphi u(z)} e^{-\Phi_0(z)/h} \, L(dz) = \int_{\abs{z} < \delta} \abs{\mathcal{T}_\varphi u(z)} e^{-\Phi_0(z)/h} \, L(dz) + C h^N, \ \ 0< h \le h_0.
\end{align}
Thus, for every $N \in \N$ and $1 \le p \le \infty$, there is $C = C(\delta, N, p)>0$ such that
\begin{align} \label{estimate_prior_to_metaplectic_operator}
	\norm{u}_{L^p(\R^n)} \le C h^{\frac{n}{2p}-\frac{3n}{4}} \int_{\abs{z} < \delta} \abs{\mathcal{T}_\varphi u(z)} e^{-\Phi_0(z)/h} \, L(dz) + C h^N, \ \ 0 < h \le h_0.
\end{align}
\end{proof}

In the sequel, it will also be useful to have pointwise estimates available for $\mathcal{T}_\varphi u(z) e^{-\Phi_0(z)/h}$ in compact subsets of $\C^n$ that do not contain the origin $0$. When we assume that $p_0, p_1 \in S_{\textrm{Hol}}(m)$, it is actually true that $\mathcal{T}_\varphi u(z) e^{-\Phi_0(z)/h}$ is exponentially small in any fixed compact subset $K \subset \C^n$ that does not contain $0 \in \C^n$. This follows from standard analytic ellipticity arguments, which we review below.

We recall the notion of the semiclassical analytic wavefront set. Let $\varphi$ be an FBI phase function on $\C^{2n}$ with associated FBI transform $\mathcal{T}_\varphi$ and strictly plurisubharmonic weight $\Phi_0$. Let $\kappa_\varphi: \C^{2n} \rightarrow \C^{2n}$ be the complex linear canonical transformation generated by $\varphi$, let $\pi_1: \C^{2n} \rightarrow \C^n$ be projection onto the first factor $\pi_1: (z,\zeta) \mapsto z$, and let $\kappa_\varphi^\flat := \pi_1 \circ \left. \kappa_\varphi \right|_{\R^{2n}}$. For an $h$-dependent family $v = v(h) \in L^2(\R^n)$, $0< h \le 1$, such that $\norm{v}_{L^2(\R^n)} = \mathcal{O}(1)$ as $h \rightarrow 0^+$, we can define the {\emph{semiclassical analytic wavefront set}} $\textrm{WF}_{A,h}(v) \subset \R^{2n}$ of $v$ as follows: a point $(x_0, \xi_0) \in \R^{2n}$ does not lie in $\textrm{WF}_{A,h}(v)$ if there exist $C,c>0$ and a bounded open neighborhood $U_0$ of $\kappa_\varphi^\flat(x_0,\xi_0)$ in $\C^n$ such that
\begin{align}
	\abs{\mathcal{T}_\varphi u(z)} \le C e^{-c/ h + \Phi_0(z)/h}, \ \ z \in U_0, \ \ 0<h \le 1.
\end{align}
It can be shown $\textrm{WF}_{A,h}(v)$ is independent of the choice of FBI phase function $\varphi$. For more information regarding semiclassical analytic wavefront sets, we refer the reader to \cite{AnalyticMicrolocal_Analysis}, \cite{Minicourse}, or \cite{Martinez}. The following corollary characterizes the semiclassical analytic wavefront set of any $L^2$-normalized ground state $u$ of $P$.


\begin{corollary} \label{Wavefront_is_0}
	If $u \in L^2(\R^n)$ is as in the statement of Theorem \ref{main_theorem}, then 
	\begin{align}
		\textrm{WF}_{A,h}(u) = \{0\}.
	\end{align}
Consequently, if $\varphi$ is any FBI phase function on $\C^{2n}$ with associated strictly plurisubharmonic weight $\Phi_0$ and $K$ is any compact subset of $\C^n$ such that $0 \notin K$, then there are $C,c>0$ and $0<h_0 \le 1$ such that
\begin{align} \label{estimate_in_compact_subsets}
	\abs{\mathcal{T}_\varphi u(z)} \le C e^{-c / h + \Phi_0(z)/h}, \ \ z \in K, \ \ 0<h \le h_0.
\end{align}
\end{corollary}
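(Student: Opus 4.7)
The plan is to prove the identity $\textrm{WF}_{A,h}(u) = \{0\}$ as a two-sided inclusion, and then to deduce (\ref{estimate_in_compact_subsets}) from it by a compactness argument.

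For the inclusion $\textrm{WF}_{A,h}(u) \subset \{0\}$, I would work on the FBI side. By Egorov's theorem applied to the equation $Pu = 0$, the transform $\mathcal{T}_\varphi u$ satisfies the pseudodifferential equation
\begin{align*}
\textrm{Op}^w_{\Phi_0, h}(\mathfrak{p}_0 + h\mathfrak{p}_1) \mathcal{T}_\varphi u = 0,
\end{align*}
where $\mathfrak{p}_j := p_j \circ \kappa_\varphi^{-1}$, $j=0,1$, extend holomorphically to a tubular neighborhood of $\Lambda_{\Phi_0}$ in $\C^{2n}$. Fix $(x_0,\xi_0) \in \R^{2n} \setminus \{0\}$ and let $z_0 := \kappa_\varphi^\flat(x_0,\xi_0) \in \C^n \setminus \{0\}$. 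Since $\textrm{Re} \, p_0 > 0$ on $\R^{2n} \setminus \{0\}$ by (\ref{non-negativity of the real part of leading symbol}) and (\ref{null set of the real part is 0}), the value $\mathfrak{p}_0(\kappa_\varphi(x_0,\xi_0)) = p_0(x_0,\xi_0)$ is non-zero, so $\mathfrak{p}_0$ is bounded away from zero in a complex neighborhood of $(z_0, (2/i)\p_z \Phi_0(z_0)) \in \Lambda_{\Phi_0}$. This is the analytic analogue of the $S(m)$-ellipticity exploited in Proposition \ref{course_microlocalization_real_side}. Applying the standard microlocal analytic parametrix construction for operators with holomorphic symbols (as developed in Section 1.4 of \cite{Minicourse} or Chapter 4 of \cite{Martinez}) produces a local holomorphic inverse of $\textrm{Op}^w_{\Phi_0, h}(\mathfrak{p}_0 + h\mathfrak{p}_1)$ near $z_0$, which when applied to the equation above yields a bound of the form $|\mathcal{T}_\varphi u(z)| \le C e^{-c/h + \Phi_0(z)/h}$ on a neighborhood of $z_0$. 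Hence $(x_0,\xi_0) \notin \textrm{WF}_{A,h}(u)$, establishing the inclusion.

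For the reverse inclusion $\{0\} \subset \textrm{WF}_{A,h}(u)$, I would argue by contradiction. If $0 \notin \textrm{WF}_{A,h}(u)$, then combining the resulting exponential bound in a neighborhood of $z = 0$ with the exponential bounds already obtained at all other points of $\R^{2n}$, a finite covering of the closed ball $\{|z| \le R\}$ (for any fixed $R>0$) together with Proposition \ref{localization_lemma_FBI_side} on its complement would yield the global estimate
\begin{align*}
\norm{\mathcal{T}_\varphi u}_{L^2_{\Phi_0}(\C^n)}^2 \le C e^{-c/h} + \mathcal{O}(h^\infty),
\end{align*}
contradicting the unitarity of $\mathcal{T}_\varphi$ combined with $\norm{u}_{L^2(\R^n)} = 1$. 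Hence $0 \in \textrm{WF}_{A,h}(u)$, and equality is established.

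Once $\textrm{WF}_{A,h}(u) = \{0\}$ is known, the bound (\ref{estimate_in_compact_subsets}) follows by compactness: for each $z_0$ in a compact set $K \subset \C^n \setminus \{0\}$, the definition of $\textrm{WF}_{A,h}$ provides an open neighborhood $U_{z_0}$ and constants $C_{z_0}, c_{z_0}, h_0(z_0) > 0$ on which the desired pointwise bound holds; extracting a finite subcover of $K$ and taking common constants finishes the argument. I expect the main technical obstacle to be the analytic parametrix construction in the Bargmann space framework, where the remainder terms must be shown to be exponentially (rather than merely polynomially) small in $h$; this requires careful tracking of the shrinking tubular neighborhoods of $\Lambda_{\Phi_0}$ at each step of the formal Neumann series.
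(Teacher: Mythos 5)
Your proposal is correct and follows essentially the same path as the paper: apply semiclassical analytic elliptic regularity on the FBI side to get $\textrm{WF}_{A,h}(u) \subset \{0\}$, rule out the empty wavefront set by combining the resulting exponential bound near $z=0$ with the $\mathcal{O}(h^\infty)$ mass estimate of Proposition \ref{localization_lemma_FBI_side} and the unitarity of $\mathcal{T}_\varphi$, and conclude (\ref{estimate_in_compact_subsets}) by a finite-cover argument. The only cosmetic difference is that the paper simply cites Theorem 4.2.2 of \cite{Martinez} for the elliptic regularity step and only needs the exponential bound in a single neighborhood of $0$, whereas you sketch re-deriving that regularity via an explicit analytic parametrix in the Bargmann framework and cover the full ball $\{|z|\le R\}$ — both refinements are sound but not required once the cited result is invoked.
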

\begin{proof}
Let $P = \textrm{Op}^w_h(p_0 + h p_1)$ be as in the statement of Theorem \ref{main_theorem}. Since $p_0, p_1 \in S_{\textrm{Hol}}(m)$ and $p_0^{-1}(0) = \{0\}$, semiclassical analytic elliptic regularity (see, for example, Theorem 4.2.2 in \cite{Martinez}) implies that
\begin{align}
	\textrm{either} \ \ WF_{A,h}(u) = \emptyset \ \ \textrm{or} \ \ WF_{A,h}(u) = \{0\}.
\end{align}
Let $\varphi$ be an FBI phase function on $\C^{2n}$ with associated FBI transform $\mathcal{T}_\varphi$ and strictly plurisubharmonic weight $\Phi_0$. Suppose towards contradiction that $\textrm{WF}_{A,h}(u) = \emptyset$. Then there exist $C,c,\delta>0$ and $0<h_0 \le 1$ such that
\begin{align} \label{small near 0}
	\sup_{\abs{z} < \delta} \abs{\mathcal{T}_\varphi u(z) e^{-\Phi_0(z)/h}} \le C e^{-c/h}, \ \ 0 < h \le h_0.
\end{align}
Proposition \ref{first_bound_Lp_norm} with $p=2$ gives
\begin{align} \label{small away from 0}
	\int_{\abs{z} \ge \delta} \abs{\mathcal{T}_\varphi u(z)}^2 e^{-2 \Phi_0(z)/h} \, L(dz) = \mathcal{O}(h^\infty).
\end{align}
Combining (\ref{small near 0}) and (\ref{small away from 0}) and using the unitarity of $\mathcal{T}_\varphi$ gives that
\begin{align}
	\norm{u}_{L^2(\R^n)} = \mathcal{O}(h^\infty),
\end{align}
which is impossible since $\norm{u}_{L^2(\R^n)} = 1$ for all $0<h \le 1$. Therefore, it must be the case that
\begin{align}
	\textrm{WF}_{A,h}(u) = \{0\}.
\end{align}
If $K \subset \C^n$ is a compact subset such that $0 \notin K$, then we can find a finite collection of bounded open subsets $U_j$, $1 \le j \le k$, of $\C^n$, constants $C_j, c_j>0$, $1 \le j \le k$, and $0< h_0 \le 1$ such that
\begin{enumerate}
	\item $0 \notin U_j$, $1 \le j \le k$,
	\item $K \subset \bigcup_{1 \le j \le k} U_j$, and
	\item $\abs{\mathcal{T}_\varphi u(z)} \le C_j e^{ {-c_j / h} + \Phi_0(z)/h}, \ \ z \in U_j, \ \ 1 \le j \le k, \ \ 0<h \le h_0$.
\end{enumerate}
Let $U = \bigcup_{1 \le j \le k} U_j$. We have
\begin{align}
	\abs{\mathcal{T}_\varphi u(z)} \le C e^{-c/ h + \Phi_0(z)/h}, \ \ z \in U, \ \ 0<h \le h_0,
\end{align}
where $C = \textrm{max}_{1 \le j \le k} C_j > 0$ and $c = \textrm{min}_{1 \le j \le k} c_j >0$. Therefore $\mathcal{T}_\varphi u$ satisfies (\ref{estimate_in_compact_subsets}).
\end{proof}



\section{Dynamical Bounds on the FBI Transform Side}

We begin this section a short review of Hamiltonian dynamics in the complex domain. For a textbook treatment of these concepts, see Chapter 11 of \cite{AnalyticMicrolocal_Analysis}. Here $\C^{2n} = \C^n_z \times \C^n_\zeta$ is equipped with the standard holomorphic symplectic form 
\begin{align} \label{definition of the standard form}
	\sigma = d\zeta \wedge dz \in \Lambda^{(2,0)}(\C^{2n}).
\end{align}
Let $X \subset \C^{2n}$ be open and let $f \in \textrm{Hol}(X)$. The \emph{complex Hamilton vector field} of $f$ is defined as the unique holomorphic vector field $H_f \in T^{1,0} (X)$ so that
\begin{align} \label{abstract_definition_complex_Hamilton_field}
	H_f \lrcorner \, \sigma = -df \ \textrm{in} \ X.
\end{align}
Explicitly,
\begin{align} \label{concrete_definition_complex_Hamilton_field}
	H_f = \p_\zeta f \cdot \p_z - \p_z f \cdot \p_\zeta \ \textrm{in} \ X.
\end{align}
From (\ref{abstract_definition_complex_Hamilton_field}), it is clear that
\begin{align} \label{interior_multiplication_complex_Hamilton_vector_field}
	\sigma(t, H_f) = df(t), \ \ t \in T^{1,0}(X).
\end{align}
Since $df(t) = 0$ whenever $t \in T^{0,1}(X)$, the identity (\ref{interior_multiplication_complex_Hamilton_vector_field}) also holds for every $t \in T(X) \otimes \C$. In particular, (\ref{interior_multiplication_complex_Hamilton_vector_field}) holds whenever $t \in T(X)$. To $H_f$ we associate the real vector field
\begin{align} \label{definition_corresponding_real_field}
	\widehat{H_f} = H_f + \overline{H_f} \in T(X),
\end{align}
which is the unique real vector field on $X$ such that $H_f - \widehat{H_f} \in T^{0,1}(X)$. Because $\sigma$ is a holomorphic $(2,0)$-form,
\begin{align}
	\sigma(t, \widehat{H_f}-H_f) = 0, \ \ t \in T(X) \otimes \C,
\end{align}
and hence
\begin{align} \label{associated_real_field_has_the_correct_property}
	\widehat{H_f} \lrcorner \, \sigma = -df \ \textrm{in} \ X.
\end{align}
When we separate the real and imaginary parts of (\ref{associated_real_field_has_the_correct_property}), we obtain
\begin{align} \label{associated_real_field_properties}
	\widehat{H_f} \lrcorner \, \textrm{Re} \, \sigma = -d(\textrm{Re} \, f), \ \ \ \widehat{H_f} \lrcorner \, \textrm{Im} \, \sigma = -d(\textrm{Im} \, f).
\end{align}
Letting $H^{\textrm{Re} \, \sigma}_{\textrm{Re} \, f}$ and $H^{\textrm{Im} \, \sigma}_{\textrm{Im} \, f}$ denote Hamilton vector fields of $\textrm{Re} \, f$ and $\textrm{Im} \, f$ with respect to the real symplectic forms $\textrm{Re} \, \sigma$ and $\textrm{Im} \, \sigma$ on $\C^{2n}$, respectively, we conclude from (\ref{associated_real_field_properties}) that
\begin{align} \label{key identity for complex time flow}
	\widehat{H_f} = H^{\textrm{Re} \, \sigma}_{\textrm{Re} \, f} = H^{\textrm{Im} \, \sigma}_{\textrm{Im} \, f}.
\end{align}
Repeating this discussion with $f$ replaced by $if$ gives
\begin{align} \label{Hamilton_field_identity_I_need}
	\widehat{H_{if}} = H^{\textrm{Im} \, \sigma}_{\textrm{Re} \, f} = -H^{\textrm{Re} \, \sigma}_{\textrm{Im} \, f},
\end{align}
where $H^{\textrm{Im} \, \sigma}_{\textrm{Re} \, f}$, resp. $H^{\textrm{Re} \, \sigma}_{\textrm{Im} \, f}$, is the Hamilton vector field of $\textrm{Re} \, f$, resp. $\textrm{Im} \, f$, with respect to $\textrm{Im} \, \sigma$, resp. $\textrm{Re} \, \sigma$.

Following \cite{AppFIO}, \cite{CplxPhaseFtn}, and \cite{AnalyticMicrolocal_Analysis}, we define the \emph{complex-time Hamilton flow} $\kappa_t$, $t \in \C$, of $f$ as the complex local $1$-parameter family of holomorphic diffeomorphisms of $X$ given by
\begin{align} \label{formal definition of complex-time Hamilton flow}
	\kappa_t = \exp{(1\widehat{H_{tf}})}, \ \ t \in \C.
\end{align}
Here $\exp{(1 \widehat{H_{tf}})}$ denotes the flow of the real vector field $\widehat{H_{tf}}$ in $X$ at time $1$. By abuse of notation, we shall often write $\exp{(tH_f)}$ in place of $\exp{(1\widehat{H_{tf}})}$ so that the complex-time Hamilton flow $\kappa_t$ generated by $H_f$ may be expressed as
\begin{align}
	\kappa_t = \exp{(tH_{f})}, \ \ t \in \C.
\end{align}
Thanks to the Cauchy-Riemann equations, we have
\begin{align} \label{commuting vector fields}
	[\widehat{H_f}, \widehat{H_{if}}] = 0
\end{align}
for any $f \in \textrm{Hol}(X)$. Also, if $f \in \textrm{Hol}(X)$, it is true that
\begin{align}
	\widehat{H_{tf}} = (\textrm{Re} \, t) \widehat{H_f} + (\textrm{Im} \, t) \widehat{H_{if}}
\end{align}
for any $t \in \C$. This observation, in conjunction with (\ref{formal definition of complex-time Hamilton flow}) and (\ref{commuting vector fields}), shows that the complex-time Hamilton flow $\kappa_t$ on $X$ generated by $H_f$ is given by
\begin{align}
	\kappa_t = \exp{\left[(\textrm{Re} \, t) \widehat{H_f}\right]} \exp{\left[(\textrm{Im} \, t) \widehat{H_{if}} \right]}, \ \ t \in \C,
\end{align}
whenever the righthand side is defined. It follows that the flow $\kappa_t$ is given explicitly by
\begin{align}
	Z(t) = \kappa_t(Z_0), \ \ Z_0 \in X, \ \ t \in \textrm{neigh}(0; \C),
\end{align}
if and only if $Z(t)$ satisfies
\begin{align}
\begin{split}
	\begin{cases}
	\p_{\textrm{Re} \, t} Z(t) = \widehat{H_f}(Z(t)) = (\p_\zeta f(Z(t)), -\p_z f(Z(t))), \\
		\p_{\textrm{Im} \, t} Z(t) = \widehat{H_{if}}(Z(t)) = (i\p_\zeta f(Z(t)), -i\p_z f(Z(t))), \\
		Z(0) = Z_0.
	\end{cases}
\end{split}
\end{align}
A straightforward verification shows that this description of the flow $\kappa_t$ is equivalent to 
\begin{align}
(z(t), \zeta(t)) = \kappa_t(z_0, \zeta_0), \ \ (z_0, \zeta_0) \in X, \ \ t \in \textrm{neigh}(0; \C),
\end{align}
if and only if $z(t)$ and $\zeta(t)$ satisfy the complex Hamilton's equations
\begin{align} \label{complex_Hamilton_equations}
	\begin{split}
	\begin{cases}
		\p_t z(t) = \p_\zeta f(z(t), \zeta(t)), \\
		\p_t \zeta(t) = -\p_z f(z(t), \zeta(t)), \\
		z(0) = z_0, \ \zeta(0) = \zeta_0,
	\end{cases}
	\end{split}
\end{align}
where $\p_t = \frac{1}{2}(\p_{\textrm{Re} \, t} - i \p_{\textrm{Im} \, t})$. From (\ref{complex_Hamilton_equations}), it is easy to check that $\kappa_t$ preserves the complex symplectic form $\sigma$,
\begin{align}
	\kappa_t^* \sigma = \sigma, \ \ t \in \C,
\end{align}
whenever the lefthand side is well-defined. For this reason, we say that $\kappa_t$ is a \emph{complex local $1$-parameter family of complex canonical transformations} of $X$.

Let $p_0 \in S_{\textrm{Hol}}(m)$ be as in the statement of Theorem \ref{main_theorem}, let $\varphi$ be an FBI phase function on $\C^{2n}$ with associated FBI transform $\mathcal{T}_\varphi$, let $\Phi_0$ be the strictly plurisubharmonic weight associated to $\varphi$, let $\kappa_{\varphi}: \C^{2n} \rightarrow \C^{2n}$ be the complex linear canonical transformation generated by $\varphi$, and let
\begin{align}
	\mathfrak{p}_0 = p_0 \circ \kappa_{\varphi}^{-1} \in \textrm{Hol}(\Lambda_{\Phi_0} + W),
\end{align}
where
\begin{align}
	\Lambda_{\Phi_0} = \set{\left(z, \frac{2}{i} \p_z \Phi_0(z) \right)}{z \in \C^n},
\end{align}
and $W$ is a small bounded open neighborhood of $(0,0)$ in $\C^{2n}$. Let 
\begin{align}
	H_{\mathfrak{p}_0} = \p_\zeta \mathfrak{p}_0 \cdot \p_z - \p_z \mathfrak{p}_0 \cdot \p_\zeta \in T^{1,0}(\Lambda_{\Phi_0}+W)
\end{align}
be the complex Hamilton vector field of $\mathfrak{p}_0$, and let
\begin{align}
	\kappa_t = \exp{\left(t H_{\mathfrak{p}_0} \right)}, \ \ t \in \C,
\end{align}
be the complex-time Hamilton flow of $\mathfrak{p}_0$, defined locally in $\Lambda_{\Phi_0}+W$ for sufficiently small complex times $t$. In this section, we shall be interested in the evolution of the subspace $\Lambda_{\Phi_0}$ by the flow $\kappa_t$ in a small, but fixed, neighborhood of the origin $0 \in \C^{2n}$. Using Hamilton-Jacobi theory, it is possible to obtain a real analytic generating function for this evolution. We describe this in detail below, following essentially the discussion in \cite{WF_Multiple}.

Let $\C^{2+2n} = \C^{2}_{t, \tau} \times \C^{2n}_{z, \zeta}$ be equipped with the standard complex symplectic form
\begin{align}
	\Omega = d \tau \wedge dt + \sigma \in \Lambda^{(2,0)}(\C^{2+2n}),
\end{align}
where $\sigma$ is as in (\ref{definition of the standard form}).
Let
\begin{align}
	G(t, \tau; z, \zeta) = \tau + \mathfrak{p}_0(z,\zeta), \ \ (t,\tau; z, \zeta) \in \textrm{neigh}((0,0; 0,0); \C^2 \times \C^{2n}),
\end{align}
and let
\begin{align}
	H^\Omega_G = \frac{\p}{\p t} + H_{\mathfrak{p}_0}
\end{align}
be the complex Hamilton vector field of $G$ with respect to $\Omega$, defined in an open neighborhood of $(0, 0; 0,0)$ in $\C^{2} \times \C^{2n}$. Consider the following real $2n$-dimensional submanifold of $\C^{2+2n}$:
\begin{align}
	\mathcal{L}_0 = \set{\left(0, - \mathfrak{p}_0 \left(z, \frac{2}{i} \p_z \Phi_0(z) \right); z, \frac{2}{i} \p_z \Phi_0(z) \right)}{z \in \textrm{neigh}(0; \C^n)}
\end{align}
Observe that $\mathcal{L}_0$ is isotropic with respect to the real symplectic form
\begin{align}
	\textrm{Im}(\Omega) = \textrm{Im}(d\tau \wedge dt) +\textrm{Im} \, \sigma
\end{align} 
on $\C^2 \times \C^{2n}$ and that
\begin{align}
	\mathcal{L}_0 \subset G^{-1}(0).
\end{align}
Let
\begin{align}
	\exp{(t H^\Omega_G)}, \ \ t \in \textrm{neigh}(0; \C),
\end{align}
be the complex-time Hamilton flow of $G$ with respect to the complex symplectic form $\Omega$, defined in a neighborhood of $(0,0; 0,0) \in \C^{2} \times \C^{2n}$ for sufficiently small complex times $t$. We observe that the complex-time flow generated by $H^\Omega_G$ is given explicitly by
\begin{align} \label{explicit form of flow on big space}
	\exp{\left(t H^\Omega_G \right)}(0,\tau; z, \zeta) = (t, \tau; \kappa_t(z,\zeta)), \ \ (t,\tau; z,\zeta) \in \textrm{neigh}((0,0; 0,0); \C^{2} \times \C^{2n}).
\end{align}
Let
\begin{align} \label{the flowout}
	\mathcal{L} := \bigcup_{t \in \textrm{neigh}(0; \C)} \exp{(tH^\Omega_G)} \left(\mathcal{L}_0 \right)
\end{align}
be the complex time flowout of the manifold $\mathcal{L}_0$ by $\exp{(tH^\Omega_G)}$. In view of (\ref{explicit form of flow on big space}), we have 
\begin{align} \label{explicit form of flowout}
	\mathcal{L} = \set{\left(t, -\mathfrak{p}_0 \left(z, \frac{2}{i} \p_z \Phi_0(z) \right); \kappa_t \left(z, \frac{2}{i} \p_z \Phi_0(z) \right) \right)}{t \in \textrm{neigh}(0; \C), \ z \in \textrm{neigh}(0; \C^n)},
\end{align}
when the flowout (\ref{the flowout}) is restricted to sufficiently small complex times. Also, from (\ref{formal definition of complex-time Hamilton flow}), (\ref{commuting vector fields}), and the identities (\ref{key identity for complex time flow}) and (\ref{Hamilton_field_identity_I_need}), we see that
\begin{align}
	\mathcal{L} = \bigcup_{t \in \textrm{neigh}(0; \C)} \exp{\left[(\textrm{Re} \, t) H^{\textrm{Im} \, \Omega}_{\textrm{Im} \, G} \right]} \exp{\left[(\textrm{Im} \, t) H^{\textrm{Im} \, \Omega}_{\textrm{Re} \, G} \right]}(\mathcal{L}_0),
\end{align}
where $\textrm{Re} \, G$ and $\textrm{Im} \, G$ are the real and imaginary parts of $G$, respectively. As is easily verified, the vector fields $H^{\textrm{Im} \, \Omega}_{\textrm{Im} \, G}$ and $H^{\textrm{Im} \, \Omega}_{\textrm{Re} \, G}$ are linearly independent and nowhere tangent to $\mathcal{L}_0$ in a neighborhood $(0,0; 0,0) \in \C^2 \times \C^{2n}$. Since $\textrm{dim} \, \mathcal{L}_0 = 2n$, it follows from Hamilton-Jacobi theory that the manifold $\mathcal{L}$ is Lagrangian for the real symplectic form $\textrm{Im}(\Omega)$ on $\C^{2} \times \C^{2n}$.

Let
\begin{align} \label{definition of the form gamma}
	\gamma = \textrm{Im}(\tau \, dt) + \textrm{Im}(\zeta \, dz) \in \Lambda^1(\C^2 \times \C^{2n}).
\end{align}
We observe that
\begin{align}
	d \gamma = \textrm{Im}(\Omega).
\end{align}
Because $\mathcal{L}$ is Lagrangian with respect to $\textrm{Im}(\Omega)$, we have
\begin{align} \label{exterior derivative vanishes}
	\left. d \gamma \right|_{\mathcal{L}} \equiv 0.
\end{align}
Let $\pi: \mathcal{L} \rightarrow \C \times \C^n$ be the restriction of the projection $(t,\tau; z,\zeta) \mapsto (t,z)$ to $\mathcal{L}$. Since $\Lambda_{\Phi_0}$ is transverse to the fiber $\{0\} \times \C^n$ in $\C^{2n}$, the differential of $\pi$ at $(0,0;0,0) \in \mathcal{L}$,
\begin{align}
	d_{(0,0; 0,0)} \pi: T_{(0,0; 0,0)} \mathcal{L} \rightarrow T_{(0;0)} (\C \times \C^n) \cong \C \times \C^n,
\end{align}
is an invertible linear transformation. Thus we may parametrize $\mathcal{L}$ by $(t,z) \in \C \times \C^n$ in a neighborhood of $(0,0;0,0)$ in $\mathcal{L}$. By (\ref{exterior derivative vanishes}) and Poincar\'{e}'s lemma, there exists $\Phi \in C^\omega(\textrm{neigh}(0; \C) \times \textrm{neigh}(0; \C^n); \R)$, unique up to an overall additive constant, such that
\begin{align} \label{generating function from poincare}
	\gamma(t,z) = -d \Phi(t,z), \ \ t \in \textrm{neigh}(0; \C), \ \ z \in \textrm{neigh}(0; \C^n).
\end{align}
From (\ref{definition of the form gamma}), we see that
\begin{align} \label{exterior derivative of gamma explicit}
	\gamma = \frac{1}{2i}(\tau \, dt - \overline{\tau} \, d \overline{t})  + \frac{1}{2i} \left(\zeta \, dz - \overline{\zeta} \, d\overline{z} \right), \ \ (t,\tau; z, \zeta) \in \C^2 \times \C^{2n}.
\end{align}
We also have
\begin{align} \label{exterior derivative of Phi}
\begin{split}
	-d \Phi(t,z) &= -\p_t \Phi(t,z) \, dt - \p_{\overline{t}} \Phi(t,z) \, d\overline{t} \\
	& \ \ \ - \p_z \Phi(t,z) \, dz - \p_{\overline{z}} \Phi(t,z) \, d \overline{z}, \ \ t \in \textrm{neigh}(0; \C), \ \ z \in \textrm{neigh}(0; \C^n).
\end{split}
\end{align}
From (\ref{generating function from poincare}), (\ref{exterior derivative of gamma explicit}), and (\ref{exterior derivative of Phi}), it follows that
\begin{align} \label{first part iff}
	(t, \tau; z, \zeta) \in \textrm{neigh}((0,0; 0,0); \mathcal{L})
\end{align}
if and only if
\begin{align} \label{conditions to be in submanifold}
t \in \textrm{neigh}(0; \C), \ \ z \in \textrm{neigh}(0; \C^n), \ \textrm{and} \ 
	\begin{cases}
		\tau = \frac{2}{i} \p_t \Phi(t,z), \\
		\zeta = \frac{2}{i} \p_z \Phi(t,z).
	\end{cases}
\end{align}
Thus there is $0<T \ll 1$ and $U = \textrm{neigh}(0; \C^n)$ such that
\begin{align}
	\kappa_t(\Lambda_{\Phi_0} \cap U \times U) \cap U \times U = \Lambda_{\Phi_t}, \ \ t \in D(0,T),
\end{align}
where $D(0,T)$ denotes the open disc in $\C$ with radius $T$ and center $0$, 
\begin{align}
	\Phi_t := \Phi(t, \cdot) \in C^\omega(U), \ \ t \in D(0,T),
\end{align}
and
\begin{align}
	\Lambda_{\Phi_t} = \set{\left(z, \frac{2}{i} \p_z \Phi_t(z)\right)}{z \in U}, \ \ t \in D(0,T).
\end{align}
Adjusting $\Phi$ by a real constant if necessary, we may assume that
\begin{align} \label{initial condition for Phi}
	\left. \Phi_t \right|_{t=0} = \Phi_0 \ \textrm{in} \ U.
\end{align}
Also, it is true that $\Phi_t$ is the solution of a natural eikonal equation in $D(0,T) \times U$. Indeed, since $\mathfrak{p}_0$ is invariant under the flow $\kappa_t$, we have
\begin{align} \label{invariance under flow implication}
	\mathfrak{p}_0 \left(z, \frac{2}{i} \p_z \Phi_0(z) \right) = \mathfrak{p}_0 \left(\kappa_t \left(z, \frac{2}{i} \p_z \Phi_0(z) \right) \right), \ \ t \in D(0,T), \ \ z \in U.
\end{align}
From (\ref{explicit form of flowout}), (\ref{first part iff}), (\ref{conditions to be in submanifold}), (\ref{initial condition for Phi}), and (\ref{invariance under flow implication}), we deduce that $\Phi_t$ solves the initial value problem
\begin{align} \label{eikonal initial value problem}
\begin{split}
	\begin{cases}
		2 \p_t \Phi_t(z) + i \mathfrak{p}_0 \left(z, \frac{2}{i} \p_z \Phi_t(z) \right) = 0, \ \ t \in D(0,T), \ \ z \in U, \\
		\left. \Phi_t \right|_{t=0} = \Phi_0 \ \textrm{in} \ U,
	\end{cases}
\end{split}
\end{align}
where $\p_t = \frac{1}{2} \left(\p_{\textrm{Re} \, t} - i \p_{\textrm{Im} \, t} \right)$. In the sequel, we shall refer to (\ref{eikonal initial value problem}) as the {\emph{complex-time eikonal equation}}.

Finally, we note that the function $\Phi_t \in C^\omega(U)$ is strictly plurisubharmonic in $U$ for every $t \in D(0,T)$, i.e.
\begin{align}
	\Phi''_{t, \overline{z} z}(z)>0, \ \ t \in D(0,T), \ \ z \in U.
\end{align}
To see why, we observe that since $\kappa_t$ preserves the complex symplectic form $\sigma$, the manifold $\Lambda_{\Phi_t}$ is $I$-Lagrangian and $R$-symplectic for every $t \in D(0,T)$. Parametrizing $\Lambda_{\Phi_t}$ by $z \in U$, we find that
\begin{align} \label{symplectic form in coordinates}
	\sigma|_{\Lambda_{\Phi_t}} &= \sum_{j=1}^n d \left(\frac{2}{i} \p_{z_j} \Phi(z) \right) \wedge dz_j = \frac{2}{i} \sum_{k,j=1}^n \frac{\p^2 \Phi_t}{\p \overline{z}_k \p z_j} \, d \overline{z}_k \wedge dz_j.
\end{align}
Because the two-form on the righthand side of (\ref{symplectic form in coordinates}) is real, we see that
\begin{align}
	\textrm{Re} \, \sigma|_{\Lambda_{\Phi_t}} = \frac{2}{i} \sum_{k,j=1}^n \frac{\p^2 \Phi_t}{\p \overline{z}_k \p z_j} \, d \overline{z}_k \wedge dz_j.
\end{align}
As $\left. \textrm{Re} \, \sigma \right|_{\Lambda_{\Phi_t}}$ is non-degenerate, we have
\begin{align}
	\textrm{det} \, \Phi''_{t, \overline{z} z}(z) \neq 0, \ \ t \in D(0,T), \ \ z \in U.
\end{align}
Because also $\Phi_{0, \overline{z} z} > 0$ and $\Phi''_{t, \overline{z} z}$ depends continuously on $t$, it must therefore be the case that
\begin{align}
	\Phi''_{t, \overline{z} z}(z)>0, \ \ t \in D(0,T), \ \ z \in U.
\end{align}

This discussion is summarized by the following proposition.

\begin{proposition}[\cite{WF_Multiple}] \label{generating function proposition}
	Let $p_0 \in S_{\textrm{Hol}}(m)$ be as in the statement of Theorem \ref{main_theorem}, and let $\varphi$ be an FBI phase function on $\C^{2n}$ with associated strictly plurisubharmonic weight $\Phi_0$ and complex canonical transformation $\kappa_\varphi: \C^{2n} \rightarrow \C^{2n}$. Let
	\begin{align}
		\Lambda_{\Phi_0} = \set{\left(z, \frac{2}{i} \p_z \Phi_0(z) \right)}{z \in \C^n},
	\end{align}
	let $\mathfrak{p}_0 := p_0 \circ \kappa_{\varphi}^{-1} \in \textrm{Hol}(\Lambda_{\Phi_0}+W)$, where $W$ is a suitably small open neighborhood of $0$ in $\C^{2n}$, and let $\kappa_t = \exp{(tH_{\mathfrak{p}_0})}$, $t \in \C$, be the complex-time Hamilton flow of $\mathfrak{p}_0$, defined in $\Lambda_{\Phi_0}+W$. Then there exists $0<T \ll 1$, $U = \textrm{neigh}(0; \C^n)$, and a unique $\Phi \in C^\omega(D(0,T) \times U; \R)$ such that
		\begin{align} \label{geometric interpretation of eikonal equation}
			\kappa_t(\Lambda_{\Phi_0} \cap U \times U) \cap U \times U = \Lambda_{\Phi_t}, \ \ t \in D(0,T),
		\end{align}
		and
		\begin{align}
			 \left. \Phi_t \right|_{t=0} = \Phi_0 \ \textrm{in} \ U,
		\end{align}
		where 
		\begin{align}
			\Phi_t := \Phi(t, \cdot) \in C^\omega(U; \R), \ \ t \in D(0,T),
		\end{align}
 		and
		\begin{align}
			 \Lambda_{\Phi_t} := \set{\left(z, \frac{2}{i} \p_z \Phi_t(z) \right)}{z \in U}, \ \ t \in D(0,T).
		\end{align}
		The function $\Phi_t$ is strictly plurisubharmonic in $U$ for each $t \in D(0,T)$, and $\Phi$ is a solution of the complex-time eikonal equation
		\begin{align} \label{complex_time_eikonal_equation}
			\begin{split}
			\begin{cases}
				2 \p_t \Phi_t(z) + i \mathfrak{p}_0 \left(z, \frac{2}{i} \p_z \Phi_t(z) \right) = 0, \ \ (t,z) \in D(0,T) \times U, \\
				\left. \Phi_t \right|_{t=0} = \Phi_0 \ \textrm{in} \ U,
			\end{cases}
			\end{split}
		\end{align}
		where $\p_t = \frac{1}{2} \left(\p_{\textrm{Re} \, t} - i \p_{\textrm{Im} \, t} \right)$.
\end{proposition}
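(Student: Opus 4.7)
The plan is to construct $\Phi$ via Hamilton-Jacobi theory in an enlarged phase space, promoting the (non-autonomous-looking) complex-time flow $\kappa_t$ to the spatial component of a single autonomous Hamilton flow. I would work on $\C^2_{t,\tau} \times \C^{2n}_{z,\zeta}$ with symplectic form $\Omega = d\tau \wedge dt + \sigma$ and Hamiltonian $G = \tau + \mathfrak{p}_0$, whose complex Hamilton vector field $H^\Omega_G = \p_t + H_{\mathfrak{p}_0}$ generates the flow $(0,\tau;z,\zeta) \mapsto (t,\tau;\kappa_t(z,\zeta))$. This reduces the problem to the construction of a generating function for the flowout of a suitable initial manifold.

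Concretely, I would introduce
\begin{align*}
\mathcal{L}_0 = \set{\left(0, -\mathfrak{p}_0\left(z, \tfrac{2}{i}\p_z \Phi_0(z)\right); z, \tfrac{2}{i}\p_z \Phi_0(z)\right)}{z \in \textrm{neigh}(0; \C^n)},
\end{align*}
which is isotropic for $\textrm{Im}(\Omega)$ and contained in $G^{-1}(0)$, and take its complex-time flowout $\mathcal{L}$ by $\exp(tH^\Omega_G)$. The Cauchy-Riemann identity $[\widehat{H_G}, \widehat{H_{iG}}] = 0$ ensures that the two commuting real flows $\exp(s\widehat{H_G})$ and $\exp(s\widehat{H_{iG}})$ jointly generate a smooth complex-time action near $\mathcal{L}_0$. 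Because these vector fields preserve $\textrm{Im}(\Omega)$ and are $\R$-linearly independent and transverse to $\mathcal{L}_0$ at the origin, $\mathcal{L}$ is a smooth $\textrm{Im}(\Omega)$-Lagrangian submanifold of real dimension $2n+2$. Using that $\Lambda_{\Phi_0}$ is transverse to the fiber $\{0\}_z \times \C^n_\zeta$, I would parametrize $\mathcal{L}$ locally by $(t,z) \in D(0,T) \times U$ and apply Poincar\'{e}'s lemma to the real one-form $\gamma = \textrm{Im}(\tau\, dt) + \textrm{Im}(\zeta\, dz)$---which satisfies $d\gamma = \textrm{Im}(\Omega)$ and hence $d\gamma|_{\mathcal{L}} = 0$---to obtain a real-analytic $\Phi$ with $\gamma|_{\mathcal{L}} = -d\Phi$, normalizing the additive constant so that $\Phi|_{t=0} = \Phi_0$.

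Reading off the coordinate expression for $\gamma$ then forces $\tau = \tfrac{2}{i}\p_t\Phi$ and $\zeta = \tfrac{2}{i}\p_z\Phi$ along $\mathcal{L}$, so $\mathcal{L}$ is the graph described by $\Phi_t$, which simultaneously yields the flow identity $\kappa_t(\Lambda_{\Phi_0} \cap U \times U) \cap U \times U = \Lambda_{\Phi_t}$ and, via the constraint $\mathcal{L} \subset G^{-1}(0)$, the complex-time eikonal equation $2\p_t \Phi_t + i\mathfrak{p}_0(z, \tfrac{2}{i}\p_z \Phi_t) = 0$. For strict plurisubharmonicity, I would observe that $\kappa_t$ preserves $\sigma$, so $\Lambda_{\Phi_t}$ remains $I$-Lagrangian and $R$-symplectic; parametrizing by $z \in U$ yields $\textrm{Re}\,\sigma|_{\Lambda_{\Phi_t}} = \tfrac{2}{i}\sum_{j,k} \Phi''_{t,\overline{z}_k z_j}\, d\overline{z}_k \wedge dz_j$, whose non-degeneracy gives $\det \Phi''_{t,\overline{z} z} \neq 0$ throughout $D(0,T) \times U$, and continuity in $t$ combined with $\Phi''_{0, \overline{z} z} > 0$ upgrades non-degeneracy to positivity once $T$ is taken sufficiently small. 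The main obstacle---essentially the only step beyond formal bookkeeping---is establishing that $\mathcal{L}$ is genuinely a smooth Lagrangian submanifold over which $(t,z)$ serves as coordinates for all $t \in D(0,T)$; this reduces to the standard real Hamilton-Jacobi construction applied to the commuting pair $\widehat{H_G}$, $\widehat{H_{iG}}$, with uniqueness of $\Phi$ following from the uniqueness of the flowout up to the additive normalization.
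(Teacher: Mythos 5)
Your proposal matches the paper's own argument essentially line for line: the same enlarged phase space $\C^2_{t,\tau}\times\C^{2n}$ with Hamiltonian $G = \tau + \mathfrak{p}_0$, the same initial isotropic manifold $\mathcal{L}_0 \subset G^{-1}(0)$, the same observation that the two commuting real vector fields $\widehat{H_G}$ and $\widehat{H_{iG}}$ (equal to $H^{\mathrm{Im}\,\Omega}_{\mathrm{Im}\,G}$ and $H^{\mathrm{Im}\,\Omega}_{\mathrm{Re}\,G}$) produce an $\mathrm{Im}(\Omega)$-Lagrangian flowout, the same primitive $\gamma = \mathrm{Im}(\tau\,dt) + \mathrm{Im}(\zeta\,dz)$ and application of Poincar\'e's lemma to produce $\Phi$ with $\gamma|_{\mathcal{L}} = -d\Phi$, the same reading-off of $\tau$ and $\zeta$, and the identical argument for strict plurisubharmonicity via non-degeneracy of $\mathrm{Re}\,\sigma|_{\Lambda_{\Phi_t}}$ plus continuity in $t$. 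The only cosmetic difference is that you invoke $\mathcal{L}\subset G^{-1}(0)$ directly to get the eikonal equation, whereas the paper phrases the same fact as invariance of $\mathfrak{p}_0$ under $\kappa_t$.
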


Now we return our attention to eigenfunctions. Let $P = \textrm{Op}^w_h(p_0+hp_1)$, and $u \in L^2(\R^n)$ be as in the statement of Theorem \ref{main_theorem}. Let $\varphi$ be any FBI phase function on $\C^{2n}$ with associated FBI transformation $\mathcal{T}_\varphi$ and strictly plurisubharmonic weight $\Phi_0$, and let $\kappa_{\varphi}: \C^{2n} \rightarrow \C^{2n}$ be the complex linear canonical transformation generated by $\varphi$. For  $j=0,1$, let $\mathfrak{p}_j := p_j \circ \kappa_{\varphi}^{-1} \in \textrm{Hol}(\Lambda_{\Phi_0}+W)$, where $W$ is a suitably small bounded open neighborhood of $0$ in $\C^{2n}$. Let $\kappa_t = \exp{(tH_{\mathfrak{p}_0})}$, $t \in \C$ be the complex-time Hamilton flow of $\mathfrak{p}_0$, defined in $\Lambda_{\Phi_0}+W$. Suppose that $0<T \ll 1$, $U = \textrm{neigh}(0; \C^n)$, and $\Phi \in C^\omega(D(0,T) \times U; \R)$ are as in the conclusion of Proposition \ref{generating function proposition}.

For each $t \in D(0,T)$ and each open subset $U_0 \subset U$, let $H_{\Phi_t}(U_0)$ be the Hilbert space
\begin{align}
	H_{\Phi_t}(U_0) := L^2(U_0, e^{-2 \Phi_t(z)/h} \, L(dz)) \cap \textrm{Hol}(U_0),
\end{align}
equipped with the norm
\begin{align}
	\norm{v}^2_{L^2_{\Phi_t}(U_0)} := \int_{U_0} \abs{v(z)}^2 e^{-2 \Phi_t(z)/h} \, L(dz).
\end{align}
Here $\Phi_t = \Phi(t, \cdot) \in C^\omega(U)$, $t \in D(0,T)$. The main goal of this section is to show that there exists $\delta>0$, $0< T_0 < T$,  $0<C<\infty$, and $0<h_0 \le 1$, such that 
\begin{align} \label{dynamical_bound_first}
	\sup_{\substack{t \in D(0,T_0) \\ 0< h \le h_0}} \norm{\mathcal{T}_\varphi u}_{L^2_{\Phi_t}(\{\abs{z}<\delta\})} \le C.
\end{align}
To begin the proof, let
\begin{align}
	\mathcal{U}(t,z; h) := \mathcal{T}_\varphi u(h)(z), \ \ (t,z) \in \C \times \C^n, \ \ 0< h \le 1.
\end{align}
Let
\begin{align}
	\tilde{P} = \textrm{Op}^w_{\Phi_0, h}(\mathfrak{p}_0 + h\mathfrak{p}_1).
\end{align}
By Egorov's theorem,
\begin{align}
	 \tilde{P}\mathcal{U}(t,z;h) = 0, \ \ (t,z) \in \C \times \C^n, \ \ 0 < h \le 1.
\end{align}
Thus $\mathcal{U}$ is trivially a solution of the semiclassical Schr\"{o}dinger initial value problem
\begin{align} \label{complex_time_Schrodinger_IVP}
	\begin{split}
		\begin{cases}
			\left(h D_t + \tilde{P}\right) \mathcal{U}(t,z;h) = 0, \ (t,z) \in \C \times \C^n, \ \ 0< h \le 1, \\
			\mathcal{U}(0, z;h) = \mathcal{T}_\varphi u(h)(z), \ \ z \in \C^n, \ \ 0< h \le 1,
		\end{cases}
	\end{split}
\end{align}
where 
\begin{align}
D_t := \frac{1}{i} \p_t, \ \ \p_t := \frac{1}{2}\left(\p_{\textrm{Re} \, t} - i \p_{\textrm{Im} \, t} \right).
\end{align}
Let $\delta>0$ be small enough so that
\begin{align}
	\{\abs{z} < 5 \delta \} \subset \subset U.
\end{align}
Let $\chi \in C^\infty_0(\C^n; [0,1])$ be such that
\begin{align} \label{cutoff assumptions}
	\chi(z) = 1, \ \ \abs{z} \le 2 \delta, \ \ \textrm{and} \ \ \chi(z) = 0, \ \ \abs{z} \ge 3\delta,
\end{align}
and let $\widetilde{\chi} \in C^\infty_0(\C^n; [0,1])$ be such that
\begin{align} \label{tilde cutoff assumptions}
	\widetilde{\chi}(z) = 1, \ \ \abs{z} \le 4 \delta, \ \ \textrm{and} \ \ \widetilde{\chi}(z) = 0, \ \ \abs{z} \ge 5 \delta.
\end{align}
Let
\begin{align} \label{evolved_weights}
	\widetilde{\Phi}_t := (1-\widetilde{\chi}) \Phi_0 + \widetilde{\chi} \Phi_t, \ \ t \in D(0,T).
\end{align}
By construction, $\widetilde{\Phi}_t \in C^\infty(\C^n; \R)$ for each $t \in D(0,T)$, and we have
\begin{align}
	\widetilde{\Phi}_t(z) = \Phi_t(z), \ \ \abs{z} \le 2 \delta, \ \ t \in D(0,T),
\end{align}
and
\begin{align}
	\widetilde{\Phi}_t(z) = \Phi_0(z), \ \ \abs{z} \ge 5 \delta, \ \ t \in D(0,T).
\end{align}
To each $\widetilde{\Phi}_t$, we associate the Hilbert space
\begin{align}
	H_{\widetilde{\Phi}_t}(\C^n) = L^2(\C^n, e^{-2 \widetilde{\Phi}_t(z)/h} \, L(dz)) \cap \textrm{Hol}(\C^n),
\end{align}
which is equipped with the inner product
\begin{align}
	(v_1, v_2)_{L^2_{\widetilde{\Phi}_t}(\C^n)} := \int_{\C^n} v_1(z) \overline{v_2(z)} e^{-2 \widetilde{\Phi}_t(z)/h} \, L(dz), \ \ v_1, v_2 \in L^2_{\widetilde{\Phi}_t}(\C^n),
\end{align}
inherited from 
\begin{align}L^2_{\widetilde{\Phi}_t}(\C^n):=L^2(\C^n, e^{-2 \widetilde{\Phi}_t(z)/h} \, L(dz)).
\end{align}
Let
\begin{align}
	\norm{v}^2_{L^2_{\widetilde{\Phi}_t}(\C^n)} = \int_{\C^n} \abs{v(z)}^2 e^{-2 \widetilde{\Phi}_t(z)/h} \, L(dz), \ \ v \in L^2_{\widetilde{\Phi}_t}(\C^n),
\end{align}
denote the corresponding norm.

Consider the quantity
\begin{align}
	M_t := (\chi \mathcal{U}, \mathcal{U})_{L^2_{\widetilde{\Phi}_t}(\C^n)}, \ \ t \in D(0,T).
\end{align}
In view of (\ref{complex_time_Schrodinger_IVP}), we have
\begin{align}
	h D_t M_t &= - \left(\chi \tilde{P}\mathcal{U}, \mathcal{U} \right)_{L^2_{\widetilde{\Phi}_t}(\C^n)} - \frac{2}{i} \left(\chi \p_t \widetilde{\Phi}_t \mathcal{U}, \mathcal{U} \right)_{L^2_{\widetilde{\Phi}_t}(\C^n)}
\end{align}
Let $0< T_0 < T$ be small enough so that
\begin{align}
	\Lambda_{\widetilde{\Phi}_t} := \set{\left(z, \frac{2}{i} \p_z \widetilde{\Phi}_t(z) \right)}{z \in \C^n} \subset \Lambda_{\Phi_0} + W, \ \ t \in D(0,T_0)
\end{align}
and
\begin{align}
	\widetilde{\Phi}''_{t, \overline{z} z}(z) > 0, \ \ z \in \C^n, \ \ t \in D(0,T_0).
\end{align}
By taking $T_0$ smaller, we may make the quantity
\begin{align}
	\max_{k = 0,1,2} \sup_{t \in D(0,T_0)} \norm{\nabla^k \widetilde{\Phi}_t - \nabla^k \Phi_0}_{L^\infty(\C^n)}
\end{align}
as small as we wish. Thus, if $0<T_0<T$ is sufficiently small, the Hilbert spaces $L^2_{\widetilde{\Phi}_t}(\C^n)$ and $L^2_{\Phi_0}(\C^n)$ agree for every $t \in D(0,T_0)$ and have equivalent norms. We note, however, that these norms are not uniformly equivalent as $h \rightarrow 0^+$.
From Lemma 12.7 of  \cite{Lectures_on_Resonances}, we know that
\begin{align}
	\chi \tilde{P} = \mathcal{O}(1): H_{\widetilde{\Phi}_t}(\C^n) \rightarrow L^2_{\widetilde{\Phi}_t}(\C^n),
\end{align}
uniformly for $t \in D(0,T_0)$ and $0<h \ll 1$. By the quantization-multiplication theorem (Proposition 12.10 in \cite{Lectures_on_Resonances}), there is $0<h_0 \le 1$ such that
\begin{align}
\begin{split}
	&\left(\chi \tilde{P} \mathcal{U}, \mathcal{U} \right)_{L^2_{\widetilde{\Phi}_t}(\C^n)} \\ &= \int_{\C^n} \chi(z)\left[\mathfrak{p}_0 \left(z, \frac{2}{i} \p_z \widetilde{\Phi}_t(z) \right) + h \mathfrak{p}_1 \left(z, \frac{2}{i} \p_z \widetilde{\Phi}_t(z); h \right) \right] \abs{\mathcal{U}(t,z;h)}^2 e^{-2 \widetilde{\Phi}_t(z)/h} \, L(dz) \\
	& \ \ \ + \mathcal{O} \left( h \norm{\mathcal{U}}^2_{L^2_{\widetilde{\Phi}_t(\C^n)}} \right), \ \ t \in D(0,T_0), \ \ 0< h \le h_0.
\end{split}
\end{align}
Thus
\begin{align} \label{derivative of M_t}
	h D_t M_t = {\rm I}(t;h) + {\rm II}(t;h) + {\rm III}(t;h), \ \ t \in D(0,T_0), \ \ 0<h \le h_0,
\end{align}
where
\begin{align}
\begin{split}
	&{\rm I}(t;h) := i \int_{\C^n} \chi(z) \left[2 \p_t \widetilde{\Phi}_t(z) + i \mathfrak{p}_0 \left(z, \frac{2}{i} \p_z \widetilde{\Phi}_t(z) \right) \right] \abs{\mathcal{U}(t,z;h)}^2 e^{-2 \widetilde{\Phi}_t(z)/h} \, L(dz), \\
	&{\rm II}(t;h) := - h \int_{\C^n} \chi(z) \, \mathfrak{p}_1 \left(z, \frac{2}{i} \p_z \widetilde{\Phi}_t(z); h \right) \abs{\mathcal{U}(t,z;h)}^2 e^{-2 \widetilde{\Phi}_t(z)/h} \, L(dz),
\end{split}
\end{align}
and ${\rm III}(t;h)$ is such that
\begin{align} \label{property of III}
	{\rm III}(t;h) = \mathcal{O} \left( h \norm{\mathcal{U}}^2_{L^2_{\widetilde{\Phi}_t(\C^n)}} \right), \ \ t \in D(0,T_0), \ \ 0< h \le h_0.
\end{align}
As a consequence of (\ref{tilde cutoff assumptions}) and (\ref{evolved_weights}), we have
\begin{align} \label{identical weight}
	\widetilde{\Phi}_t(z) = \Phi_t(z), \ \ \p_z \widetilde{\Phi}_t(z) = \p_z \Phi_t(z), \ \ \abs{z} \le 3\delta, \ \ t \in D(0,T_0).
\end{align}
From (\ref{identical weight}), (\ref{cutoff assumptions}), and (\ref{complex_time_eikonal_equation}), we deduce
\begin{align} \label{I vanishes}
	{\rm I}(t;h) = i \int_{\abs{z} \le 3 \delta} \chi(z) \left[2 \p_t \Phi_t(z) + i \mathfrak{p}_0 \left(z, \frac{2}{i} \p_z \Phi_t(z) \right) \right] \abs{\mathcal{U}(t,z;h)}^2 e^{-2 \Phi_t(z)/h} \, L(dz) = 0
\end{align}
for all $t \in D(0,T_0)$ and $0 < h \le h_0$. Since $p_1 \in S_{\textrm{Hol}}(m)$ and $\kappa_{\varphi}: \C^{2n} \rightarrow \C^{2n}$ is linear, there exists $C>0$ and $N \in \R$ such that
\begin{align}
	\abs{\mathfrak{p}_1 \left(z, \frac{2}{i} \p_z \widetilde{\Phi}_t(z); h \right)} \le C \langle z \rangle^N, \ \ z \in \C^n, \ \ t \in D(0,T_0), \ \ 0< h \le h_0.
\end{align}
In particular, there is $C>0$ such that
\begin{align}
	\abs{\mathfrak{p}_1 \left(z, \frac{2}{i} \p_z \widetilde{\Phi}_t(z) ; h \right)} \le C, \ \ \abs{z} \le 3 \delta, \ \ t \in D(0,T_0), \ \ 0 < h \le h_0.
\end{align}
It follows that
\begin{align} \label{bound for II}
	{\rm II}(t;h) = \mathcal{O} \left( hM_t \right), \ \ t \in D(0,T_0), \ \ 0< h \le h_0.
\end{align}
Next, observe that
\begin{align} \label{first_bound_for_III}
\begin{split}
	\norm{\mathcal{U}}^2_{L^2_{\widetilde{\Phi}_t}(\C^n)} =  M_t + \int_{\C^n} (1-\chi(z)) \abs{\mathcal{T}_\varphi u(z)}^2 e^{-2 \widetilde{\Phi}_t(z)/h} \, L(dz), \ \ t \in D(0,T_0), \ \ 0<h \le h_0.
\end{split}
\end{align}
Since $\chi$ satisfies (\ref{cutoff assumptions}), we have
\begin{align}
	\int_{\C^n} (1-\chi(z)) \abs{\mathcal{T}_\varphi u(z)}^2 e^{-2 \widetilde{\Phi}_t(z)/h} \, L(dz) \le 
	\int_{\abs{z} \ge 2 \delta} \abs{\mathcal{T}_\varphi u(z)}^2 e^{-2 \widetilde{\Phi}_t(z)/h} \, L(dz),
\end{align}
for all $t \in D(0,T_0)$ and $0< h \le h_0$. From (\ref{tilde cutoff assumptions}) and (\ref{evolved_weights}), we see that
\begin{align}
	\widetilde{\Phi}_t(z) = \Phi_0(z), \ \ \abs{z} \ge 5 \delta, \ \ t \in D(0,T_0).
\end{align}
Hence
\begin{align}
	\int_{\abs{z} \ge 2 \delta} \abs{\mathcal{T}_\varphi u(z)}^2 e^{-2 \widetilde{\Phi}_t(z)/h} \, L(dz) = A(t;h) + B(t;h), \ \ t \in D(0,T_0), \ \ 0< h \le h_0,
\end{align}
where
\begin{align}
\begin{split}
	A(t;h) := \int_{2 \delta \le \abs{z} \le 5 \delta} \abs{\mathcal{T}_\varphi u(z)}^2 e^{-2 \widetilde{\Phi_t}(z)/h} \ L(dz), \ \ t \in D(0,T_0), \ \ 0<h \le h_0, \\
\end{split}
\end{align}
and
\begin{align}
	B(t;h) :=  \int_{\abs{z}>5 \delta} \abs{\mathcal{T}_\varphi u(z)}^2 e^{-2 \Phi_0(z)/h} \ L(dz), \ \ t \in D(0,T_0), \ \ 0< h \le h_0.
\end{align}
We note that $B(t;h)$ is independent of $t$. Taking $h_0$ smaller if necessary, we get from Corollary \ref{Wavefront_is_0} that there are constants $C, c>0$ such that
\begin{align}
	\abs{\mathcal{T}_\varphi u(z)} \le C e^{-c/h + \Phi_0(z)/h}, \ \ 2 \delta \le \abs{z} \le 5 \delta, \ \ 0< h \le h_0.
\end{align}
Since $\widetilde{\Phi}_t$ depends continuously on $t$ and $\widetilde{\Phi}_0 = \Phi_0$, we may assume, after taking $T_0$ smaller if necessary, that
\begin{align}
	\abs{\Phi_0(z) - \widetilde{\Phi}_t(z)} \le \frac{c}{2}, \ \ \abs{z} \le 5 \delta, \ \ t \in D(0,T_0).
\end{align}
It follows that
\begin{align}
	\abs{\mathcal{T}_\varphi u(z)}^2 e^{-2 \widetilde{\Phi}_t(z)/h} \le \abs{\mathcal{T}_\varphi u(z)}^2 e^{-2 \Phi_0(z)/h} e^{c/h} = \mathcal{O}(e^{-c/h}), \ \ 2 \delta \le \abs{z} \le 5 \delta, \ \ 0< h \le h_0.
\end{align}
Thus
\begin{align} \label{bound for A}
	A(t;h) = \mathcal{O}(e^{-c/h}), \ \ t \in D(0,T_0), \ \ 0<h \le h_0.
\end{align}
On the other hand, Proposition \ref{localization_lemma_FBI_side} gives
\begin{align} \label{bound for B}
	B(t;h) = \mathcal{O}(h^\infty), \ \ t \in D(0,T_0), \ \ 0< h \le h_0.
\end{align}
Putting (\ref{first_bound_for_III}), (\ref{bound for A}), and (\ref{bound for B}) together, we obtain
\begin{align} \label{bound for Upsilon}
	\norm{\mathcal{U}}^2_{L^2_{\widetilde{\Phi}_t}(\C^n)} = M_t + \mathcal{O}(h^\infty), \ \ t \in D(0,T_0), \ \ 0< h \le h_0.
\end{align}
Then, from (\ref{property of III}) and (\ref{bound for Upsilon}), we get
\begin{align} \label{bound for III}
	{\rm III}(t;h) = \mathcal{O}(hM_t) + \mathcal{O}(h^\infty), \ \ t \in D(0,T_0), \ \ 0< h \le h_0.
\end{align}
Combining (\ref{derivative of M_t}), (\ref{I vanishes}), (\ref{first_bound_for_III}), and (\ref{bound for III}) therefore yields
\begin{align}
	h D_t M_t = \mathcal{O}(h M_t) + \mathcal{O}(h^\infty), \ \ t \in D(0,T_0), \ \ 0< h \le h_0.
\end{align}
We conclude that
\begin{align} \label{good bound for derivative}
	\p_t M_t = \mathcal{O}(M_t) + \mathcal{O}(h^\infty), \ \ t \in D(0,T_0), \ \ 0< h \le h_0.
\end{align}

Let $\alpha \in \{\abs{z} = 1 \}$ be arbitrary. The function
\begin{align}
	(-T_0, T_0) \ni s \mapsto M_{\alpha s}
\end{align}
is smooth and real-valued. By the chain rule,
\begin{align} \label{derivative along ray}
	\frac{d}{ds} M_{\alpha s} &= \left. \p_t M_t \right|_{t = \alpha s} \alpha + \left. \p_{\overline{t}} M_{t} \right|_{t = \alpha s} \overline{\alpha} = 2 \textrm{Re} \, \left( \left. \p_t M_{t} \right|_{t = \alpha s} \alpha \right), \ \ s \in (-T_0, T_0).
\end{align}
Bounding the righthand side of (\ref{derivative along ray}) using (\ref{good bound for derivative}), we find that for any $N>0$, there is $C>0$, independent of $\alpha$, such that
\begin{align}
	\frac{d}{ds} M_{\alpha s} \le C M_{\alpha s} + C h^N, \ \ s \in (-T_0, T_0), \ \ 0< h \le h_0.
\end{align}
By Gronwall's inequality, for any $N>0$, there is $C>0$, independent of $\alpha$, such that
\begin{align} \label{bound in positive time direction}
	M_{\alpha s} \le C M_0 + C h^N, \ \ s \in [0,T_0), \ \ 0< h \le h_0.
\end{align}
Since
\begin{align}
	M_0 \le \norm{\mathcal{T}_\varphi u}^2_{L^2_{\Phi_0}(\C^n)} = 1, \ \ 0< h \le h_0,	
\end{align}
and (\ref{bound in positive time direction}) holds for any $\alpha \in \{\abs{z}=1\}$, we conclude, after taking $h_0$ smaller if necessary, that
\begin{align} \label{M_t finite}
	M_t \le C, \ \ t \in D(0,T_0), \ \ 0< h \le h_0,
\end{align}
for some constant $0< C < \infty$. In view of (\ref{cutoff assumptions}), (\ref{tilde cutoff assumptions}), and (\ref{M_t finite}), we have
\begin{align}
	\norm{\mathcal{T}_\varphi u}_{L^2_{\Phi_t}(\{ \abs{z} < \delta \})} \le M^{1/2}_t < C^{1/2}, \ \ t \in D(0,T_0), \ \ 0< h \le h_0.
\end{align}
Therefore it is true that (\ref{dynamical_bound_first}) holds for some $\delta>0$ and $0<T_0 < T$.  We have proved the following proposition.

\begin{proposition} \label{locally in evolved spaces}
	Let $P = \textrm{Op}^w_h(p_0+hp_1)$ and $u \in L^2(\R^n)$ be as in the statement of Theorem \ref{main_theorem}. Let $\varphi$ be any FBI phase function on $\C^{2n}$ with associated FBI transformation $\mathcal{T}_\varphi$ and strictly plurisubharmonic weight $\Phi_0$, and let $\kappa_{\varphi}: \C^{2n} \rightarrow \C^{2n}$ be the complex linear canonical transformation generated by $\varphi$. Let $\mathfrak{p}_j := p_j \circ \kappa_{\varphi}^{-1} \in \textrm{Hol}(\Lambda_{\Phi_0}+W)$, $j=0,1$, where $W$ is a suitably small bounded open neighborhood of $0$ in $\C^{2n}$, and let $\kappa_t = \exp{(tH_{\mathfrak{p}_0})}$, $t \in \C$ be the complex-time Hamilton flow of $\mathfrak{p}_0$, defined in $\Lambda_{\Phi_0}+W$.  Suppose that $0<T \ll 1$, $U = \textrm{neigh}(0; \C^n)$, and $\Phi \in C^\omega(D(0,T) \times U; \R)$ are such that
	\begin{align} \label{evolution of manifold in box}
		\kappa_t(\Lambda_{\Phi_0} \cap U \times U) \cap U \times U = \Lambda_{\Phi_t}, \ \ t \in D(0,T),
	\end{align}
	and
	\begin{align}
		\left. \Phi_t \right|_{t=0} = \Phi_0 \ \textrm{in} \ U,
	\end{align}
	where 
	\begin{align}
	\Phi_t = \Phi(t,\cdot) \in C^\omega(U; \R), \ \ t \in D(0,T), 
	\end{align}
	and
	\begin{align}
		\Lambda_{\Phi_t} = \set{\left(z, \frac{2}{i} \p_z \Phi_t(z) \right)}{z \in U}, \ \ t \in D(0,T).
	\end{align}
	Then there are constants $\delta>0$, $0<T_0 < T$, $0<C<\infty$, and $0< h_0 \le 1$ such that
		\begin{align} \label{dynamical_bound}
			\sup_{\substack{t \in D(0,T_0) \\ 0< h \le h_0}} \norm{\mathcal{T}_\varphi u}_{L^2_{\Phi_t}(\{\abs{z}<\delta\})} \le C.
		\end{align}
\end{proposition}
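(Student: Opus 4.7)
The strategy is an energy-type argument on the FBI side: construct a modified weight that interpolates between $\Phi_t$ near the origin and $\Phi_0$ far away, and then differentiate a suitable weighted inner product in complex time, using the eikonal equation (\ref{complex_time_eikonal_equation}) to cancel the leading-order contribution. Since $\Phi_t$ is only defined on $U$, the first step is to fix $\delta>0$ with $\{|z|\le 5\delta\}\subset\subset U$ and choose cutoffs $\chi,\widetilde\chi\in C_0^\infty(\C^n;[0,1])$ with $\chi=1$ on $\{|z|\le 2\delta\}$, $\supp{\chi}\subset\{|z|<3\delta\}$, and $\widetilde\chi=1$ on $\{|z|\le 4\delta\}$, $\supp{\widetilde\chi}\subset\{|z|<5\delta\}$. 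Setting
\begin{align*}
	\widetilde\Phi_t:=(1-\widetilde\chi)\Phi_0+\widetilde\chi\Phi_t,
\end{align*}
one obtains a globally defined, strictly plurisubharmonic weight for $|t|<T_0$ small enough, agreeing with $\Phi_t$ on $\supp{\chi}$ and with $\Phi_0$ for $|z|\ge 5\delta$. Since $\|\mathcal{T}_\varphi u\|_{L^2_{\Phi_t}(\{|z|<\delta\})}^2\le M_t$ where $M_t:=(\chi\,\mathcal{U}(t,\cdot),\mathcal{U}(t,\cdot))_{L^2_{\widetilde\Phi_t}(\C^n)}$ and $\mathcal{U}(t,z):=\mathcal{T}_\varphi u(z)$, it is enough to bound $M_t$ uniformly.

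\textbf{Time derivative and cancellation.} By Egorov's theorem, $\widetilde P\mathcal{U}=0$ where $\widetilde P=\textrm{Op}^w_{\Phi_0,h}(\mathfrak p_0+h\mathfrak p_1)$, so trivially $(hD_t+\widetilde P)\mathcal{U}=0$. Differentiating $M_t$ then yields
\begin{align*}
hD_tM_t=-(\chi\widetilde P\mathcal{U},\mathcal{U})_{L^2_{\widetilde\Phi_t}}-\tfrac{2}{i}(\chi(\p_t\widetilde\Phi_t)\mathcal{U},\mathcal{U})_{L^2_{\widetilde\Phi_t}}.
\end{align*}
The quantization-multiplication theorem (Proposition 12.10 of \cite{Lectures_on_Resonances}) replaces the first term, to leading order, by the pointwise integral against $\chi(z)\,\mathfrak p_0(z,\tfrac{2}{i}\p_z\widetilde\Phi_t(z))$ plus an $\mathcal{O}(h\|\mathcal{U}\|^2_{L^2_{\widetilde\Phi_t}})$ error and an $h\mathfrak p_1$ term bounded by $\mathcal{O}(hM_t)$ (since $\mathfrak p_1$ is bounded on $\supp{\chi}$). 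On $\supp{\chi}$ the weights $\widetilde\Phi_t$ and $\Phi_t$ coincide, so the principal symbol piece combines with the $\p_t\widetilde\Phi_t$ term to produce the factor $2\p_t\Phi_t+i\mathfrak p_0(z,\tfrac{2}{i}\p_z\Phi_t)$, which vanishes identically by the eikonal equation (\ref{complex_time_eikonal_equation}). The result is
\begin{align*}
	hD_tM_t=\mathcal{O}(hM_t)+\mathcal{O}\!\left(h\|\mathcal{U}\|^2_{L^2_{\widetilde\Phi_t}}\right).
\end{align*}

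\textbf{Controlling the $\widetilde\Phi_t$-mass outside $\supp{\chi}$.} To close the estimate one shows $\|\mathcal{U}\|^2_{L^2_{\widetilde\Phi_t}}=M_t+\mathcal{O}(h^\infty)$. Split the complement of $\supp{\chi}$ into an annulus $\{2\delta\le|z|\le 5\delta\}$ and the exterior $\{|z|>5\delta\}$. On the annulus, Corollary \ref{Wavefront_is_0} furnishes the pointwise bound $|\mathcal{T}_\varphi u(z)|\le Ce^{-c/h+\Phi_0(z)/h}$; by shrinking $T_0$ further one makes $|\widetilde\Phi_t-\Phi_0|\le c/2$ on this annulus, preserving the exponential smallness after passing to $\widetilde\Phi_t$. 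On the exterior $\widetilde\Phi_t\equiv\Phi_0$ and Proposition \ref{localization_lemma_FBI_side} with $p=2$ gives $\mathcal{O}(h^\infty)$. Hence $\p_tM_t=\mathcal{O}(M_t)+\mathcal{O}(h^\infty)$, and restricting to any ray $s\mapsto\alpha s$ with $|\alpha|=1$ converts this into a real ODE inequality to which Gronwall's lemma applies on $[0,T_0)$. The initial value $M_0\le\|\mathcal{T}_\varphi u\|^2_{L^2_{\Phi_0}(\C^n)}=1$ follows from unitarity of $\mathcal{T}_\varphi$, yielding $M_t\le C$ uniformly.

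\textbf{Main obstacle.} The critical point is the interplay between the two sources of error in the Gronwall loop. The quantization-multiplication remainder naturally produces $\mathcal{O}(h\|\mathcal{U}\|^2_{L^2_{\widetilde\Phi_t}})$, and unless the tail $\|\mathcal{U}\|^2_{L^2_{\widetilde\Phi_t}(\{|z|>2\delta\})}$ can be shown to be \emph{genuinely negligible} (not merely $\mathcal{O}(1)$), the exponential growth in Gronwall could eat up the bound. The analytic wavefront set input from Corollary \ref{Wavefront_is_0}, which relies on the holomorphic symbol hypothesis $p_0,p_1\in S_{\textrm{Hol}}(m)$, is what delivers the required exponential decay in the annulus where $\widetilde\Phi_t\ne\Phi_0$; this is the step that would fail in the merely-smooth setting and is the heart of the argument.
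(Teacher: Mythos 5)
Your proposal follows the paper's argument essentially line by line: the same two nested cutoffs producing the interpolating weight $\widetilde\Phi_t$, the same observation that $\mathcal{T}_\varphi u$ trivially solves a complex-time Schrödinger equation, the same application of quantization-multiplication to exchange $\chi\widetilde P$ for pointwise multiplication up to $\mathcal{O}(h\|\mathcal{U}\|^2)$, the same cancellation of the leading term via the eikonal equation on $\supp\chi$, the same splitting of the tail into an annulus handled by Corollary \ref{Wavefront_is_0} and an exterior handled by Proposition \ref{localization_lemma_FBI_side}, and the same Gronwall argument along real rays $s\mapsto\alpha s$. The concluding remark correctly identifies the analytic-wavefront-set input as the decisive step that the holomorphic symbol hypothesis makes available.
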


\section{The Conclusion of the Proof of Theorem \ref{main_theorem}}

Let $p_0$ be as in the statement of Theorem \ref{main_theorem}. We start this section by showing that there exists an FBI phase function $\varphi$ on $\C^{2n}$ whose associated complex linear canonical transformation $\kappa_{\varphi}: \C^{2n} \rightarrow \C^{2n}$ is such that the quadratic approximation $\mathfrak{q}$ to $\mathfrak{p}_0 := p \circ \kappa_{\varphi}^{-1}$ at the origin $0 \in \C^{2n}$ has the convenient form
\begin{align} \label{good form of quadratic approx}
	\mathfrak{q}(z,\zeta) = M z \cdot \zeta, \ \ (z,\zeta) \in \C^{2n}.
\end{align}
where $M$ is a suitable complex $n \times n$ matrix.

Let $q$ be the quadratic approximation to $p_0$ at $0 \in \C^{2n}$. Since $q$ is elliptic along its singular space $S$, it follows from Proposition 2.0.1 of \cite{QuadraticOperators} that there exists a symplectic splitting of the coordinates of $\R^{2n}$,
\begin{align} \label{splitting of real coordinates}
\begin{split}
	\R^{2n} = \R^{2n'} \times \R^{2n''}, \ \ n = n'+n'', \ \ 0 \le n', n'' \le n, \ \ (x,\xi) = (x', \xi'; x'', \xi''),
\end{split}
\end{align}
and an $\R$-linear canonical transformation $\kappa_{\Re}: \R^{2n} \rightarrow \R^{2n}$ such that
\begin{align} \label{pullback q by real can trans}
	(q \circ \kappa_{\Re}^{-1})(x,\xi) = q_1(x',\xi') + q_2(x'',\xi''), \ \ (x,\xi) \in \R^{2n},
\end{align}
where $q_1$ is a complex-valued quadratic form on $\R^{2n'}$ having non-negative real part $\textrm{Re} \, q_1 \ge 0$ and trivial singular space, $S_1 = \{0\}$, and $q_2$ is a purely imaginary quadratic form on $\R^{2n''}$ of the form
\begin{align} \label{form of q_2}
	q_2(x'',\xi'') = i \epsilon \sum_{j=1}^{n''} \lambda_j (\abs{x''_j}^2 + \abs{\xi''_j}^2), \ \ (x'',\xi'') \in \R^{2n''},
\end{align}
where $\epsilon \in \{\pm 1\}$ and $\lambda_j>0$ for all $1 \le j \le n''$. We will show that there exist FBI phase functions $\varphi_1$ and $\varphi_2$ on $\C^{2n'}$ and $\C^{2n''}$, respectively, with associated complex linear canonical transformations $\kappa_{\varphi_1}: \C^{2n'} \rightarrow \C^{2n'}$ and $\kappa_{\varphi_2}: \C^{2n''} \rightarrow \C^{2n''}$ such that
\begin{align} \label{q_1 good normal form}
	\mathfrak{q}_1(z', \zeta') := \left(q_1 \circ \kappa_{\varphi_1}^{-1}\right)(z',\zeta') = M_1 z' \cdot \zeta', \ \ (z',\zeta') \in \C^{2n'},
\end{align}
and
\begin{align} \label{q_2 good normal form}
	\mathfrak{q}_2(z'', \zeta'') := \left(q_2 \circ \kappa_{\varphi_2}^{-1}\right)(z'',\zeta'') = M_2 z'' \cdot \zeta'', \ \ (z'', \zeta'') \in \C^{2n''},
\end{align}
for some $M_1 \in \textrm{Mat}_{n' \times n'}(\C)$ and $M_2 \in \textrm{Mat}_{n'' \times n''}(\C)$.

We begin by proving the existence of an FBI phase function $\varphi_1$ on $\C^{2n'}$ such that (\ref{q_1 good normal form}) holds. For this, we will closely follow the presentation of Section 2 of \cite{Lp_bounds}. The method we employ originates from the works \cite{resolvent_esimates_elliptic} and \cite{SpectralProjectionsAndResolventBounds}. Let $\C^{2n'} = \C^{n'}_{z'} \times \C^{n'}_{\zeta'}$ be equipped with the standard complex symplectic form $\sigma^{(1)} = d\zeta' \wedge dz' \in \Lambda^{(2,0)}(\C^{2n'})$. Let $F_1$ be the Hamilton matrix of $q_1$. Since the singular space $S_1$ of $q_1$ is trivial, $S_1 = \{0\}$, we know from the work \cite{QuadraticOperators} that $F_1$ possesses no real eigenvalues. It follows that
\begin{align}
	\#\set{\lambda \in \textrm{Spec}(F_1)}{\textrm{Im} \, \lambda >0} = \# \set{\lambda \in \textrm{Spec}(F_1)}{\textrm{Im} \, \lambda < 0},
\end{align}
when counting algebraic multiplicities. For $\lambda \in \textrm{Spec}(F_1)$, let us denote the generalized eigenspace of $F_1$ corresponding to $\lambda \in \textrm{Spec}(F_1)$ by
\begin{align}
	V_\lambda = \textrm{ker}((F_1 - \lambda I)^{2n}) \subset \C^{2n'}.
\end{align}
Next, let
\begin{align}
	\Lambda^+ := \bigoplus_{\substack{\lambda \in \textrm{Spec}(F_1) \\ \textrm{Im} \, \lambda >0}} V_\lambda, \ \ \Lambda^{-} := \bigoplus_{\substack{\lambda \in \textrm{Spec}(F_1) \\ \textrm{Im} \, \lambda < 0}} V_\lambda,
\end{align}
denote the stable outgoing and stable incoming manifolds for the quadratic form $-i q_1$, respectively. From Proposition 2.1 of the work \cite{SpectralProjectionsAndResolventBounds}, we know that $\Lambda^+$ is a strictly positive $\C$-Lagrangian subspace of $\C^{2n'}$ and that $\Lambda^{-}$ is a strictly negative $\C$-Lagrangian subspace of $\C^{2n'}$. For background on positive and negative $\C$-Lagrangian subspaces of $\C^{2n}$, see \cite{Minicourse} and \cite{ComplexFIOs}. From the discussion on pages 488-489 of \cite{Minicourse}, we may thus conclude that there is an FBI phase function $\varphi_1$ on $\C^{2n'}$ such that the complex linear canonical transformation $\kappa_{\varphi_1}: \C^{2n'} \rightarrow \C^{2n'}$ generated by $\varphi_1$ satisfies
\begin{align} \label{image of plus minus lagrangian}
	\kappa_{\varphi_1}(\Lambda^+) = \set{(z',0)}{z' \in \C^{n'}}, \ \ \kappa_{\varphi_1}(\Lambda^-) = \set{(0,\zeta')}{\zeta' \in \C^{n'}}.
\end{align}
Let
\begin{align}
	\Phi^{(1)}_0(z') := \textrm{max}_{y' \in \R^{n'}} \left(-\textrm{Im} \, \varphi_1(z',y')\right), \ \ z' \in \C^{n'},
\end{align}
be the strictly plurisubharmonic weight on $\C^{n'}$ associated to $\varphi_1$, and let
\begin{align}
	\Lambda_{\Phi^{(1)}_0} = \set{\left(z, \frac{2}{i} \p_{z'} \Phi^{(1)}_0(z') \right)}{z' \in \C^{n'}}.
\end{align}
The subspace $\Lambda_{\Phi^{(1)}_0}$ of $\C^{2n'}$ is $I$-Lagrangian and $R$-symplectic for the complex symplectic form $\sigma^{(1)}$, and we have
\begin{align} \label{first I lagrangian}
	\kappa_{\varphi_1}(\R^{2n'}) = \Lambda_{\Phi^{(1)}_0}.
\end{align}
Since $\Lambda^+$ is a strictly positive $\C$-Lagrangian subspace of $\C^{2n'}$, the base $\set{(z',0)}{z' \in \C^{n'}}$ is a $\C$-Lagrangian subspace of $\C^{2n'}$ that is strictly positive relative to  $\Lambda_{\Phi^{(1)}_0}$. As explained in Chapter 11 of \cite{AnalyticMicrolocal_Analysis}, we may therefore conclude that the strictly plurisubharmonic quadratic form $\Phi^{(1)}_0$ on $\C^{n'}$ is in fact strictly convex.

Let
\begin{align} \label{pulled back q_1}
	\mathfrak{q}_1 := q \circ \kappa_{\varphi_1}^{-1} \in \textrm{Hol}(\C^{2n'}).
\end{align}
Since $V_\lambda$ is invariant under $F_1$ for every $\lambda \in \textrm{Spec}(F_1)$, we know that $\Lambda^+$ and $\Lambda^-$ are both invariant under $F_1$. Since also $\Lambda^+$ and $\Lambda^-$ are Lagrangian with respect to $\sigma^{(1)}$, we have
\begin{align} \label{q_1 vanishes on plus minus lagrangian}
	q_1(Z') = \sigma^{(1)}(Z', FZ') = 0, \ \ Z' \in \Lambda^+ \cup \Lambda^-.
\end{align}
From (\ref{image of plus minus lagrangian}), (\ref{pulled back q_1}), and (\ref{q_1 vanishes on plus minus lagrangian}), we deduce that
\begin{align} \label{pullback q_1}
	\mathfrak{q}_1(z', \zeta') = M_1 z' \cdot \zeta', \ \ (z', \zeta') \in \C^{2n'},
\end{align}
for some $M_1 \in \textrm{Mat}_{n' \times n'}(\C)$. Thus $\varphi_1$ is an FBI phase function on $\C^{2n'}$ satisfying (\ref{q_1 good normal form}).

Now we prove that there is an FBI phase function $\varphi_2$ on $\C^{2n''}$ satisfying (\ref{q_2 good normal form}). Let $\C^{2n''} = \C^{n'}_{z'} \times \C^{n'}_{\zeta'}$ be equipped with the complex symplectic form $\sigma^{(2)} = d\zeta'' \wedge dz'' \in \Lambda^{(2,0)}(\C^{2n''})$. Let
\begin{align} \label{standard phase}
	\varphi_2(z'', y'') = \frac{i}{2} (z'')^2 - i \sqrt{2} z'' \cdot y'' + \frac{i}{2} y'' \cdot y'', \ \ (z'',y'') \in \C^{2n''},
\end{align}
be the `standard FBI phase' on $\C^{2n''}$ (see the discussion on pages 304-306 of \cite{SemiclassicalAnalysis}). A straightforward computation shows that the complex linear canonical transformation $\kappa_{\varphi_2}: \C^{2n''} \rightarrow \C^{2n''}$ generated by $\varphi_1$ is
\begin{align}
	\kappa_{\varphi_2}(x'',\xi'') = \frac{1}{\sqrt{2}} (x''-i \xi'', \xi'' - i x''), \ \ (x'',\xi'') \in \C^{2n''}.
\end{align}
The inverse of $\kappa_{\varphi_2}$ is then easily seen to be
\begin{align} \label{inverse of can trans}
	\kappa_{\varphi_2}^{-1}(z'', \zeta'') = \frac{1}{\sqrt{2}}(z''+i \zeta'', \zeta'' + i z''), \ \ (z'', \zeta'') \in \C^{2n''}.
\end{align}
A direct computation using (\ref{standard phase}) shows that the strictly plurisubharmonic weight associated to $\varphi_2$ is
\begin{align} \label{second psh weight}
	\Phi^{(2)}_0(z'') := \frac{\abs{z''}^2}{2}, \ \ z'' \in \C^{n''},
\end{align}
and we have
\begin{align} \label{image second i lagrangian}
	\kappa_{\varphi_2}(\R^{2n''}) = \Lambda_{\Phi^{(2)}_0} := \set{\left(z'', \frac{\overline{z''}}{i} \right)}{z'' \in \C^{n''}}.
\end{align}
Using (\ref{form of q_2}) and (\ref{inverse of can trans}), we see that
\begin{align} \label{the pullback of q_2}
\begin{split}
	\mathfrak{q}_2(z'', \zeta'') := (q_2 \circ \kappa_{\varphi_2}^{-1})(z'', \zeta'') &= i \epsilon \sum_{j=1}^{n''} \lambda_j \left(\left(\frac{z_j''+i\zeta''_j}{\sqrt{2}}\right)^2 + \left(\frac{\zeta''_j + i z''_j}{\sqrt{2}} \right)^2 \right) \\
	&= - 2 \epsilon \sum_{j=1}^{n''} \lambda_j z''_j \zeta''_j, \ \ (z'', \zeta'') \in \C^{2n''}.
\end{split}
\end{align}
Hence
\begin{align} \label{pullback q_2}
\mathfrak{q}_2(z'', \zeta'') = M_2 z'' \cdot \zeta'', \ \ (z'', \zeta'') \in \C^{2n''},	
\end{align}
where $M_2 \in \textrm{Mat}_{n'' \times n''}(\C)$ is the diagonal matrix
\begin{align} \label{pullback matrix}
	M_2 = \textrm{diag}(-2 \epsilon \lambda_1, \ldots, -2 \epsilon \lambda_{n''}).
\end{align}

Write
\begin{align}
	\C^{2n} = \C^{2n'} \times \C^{2n''}, \ \ (z,\zeta) = (z', \zeta'; z'', \zeta'').
\end{align}
For $j=0,1$, let
\begin{align}
	\mathfrak{p}_j := p_j \circ (\kappa^\C_{\Re})^{-1} \circ (\kappa_{\varphi_1}^{-1} \times \kappa_{\varphi_2}^{-1}) \in \textrm{Hol}\left(\Lambda_{\Phi^{(1)}_0} \times \Lambda_{\Phi^{(2)}_0} + W_1 \times W_2 \right),
\end{align} 
where $W_1$ and $W_2$ are sufficiently small bounded open neighborhoods of $0$ in $\C^{2n'}$ and $\C^{2n''}$ respectively, and $\kappa^\C_{\Re}: \C^{2n} \rightarrow \C^{2n}$ is the complexification of $\kappa_{\Re}: \R^{2n} \rightarrow \R^{2n}$. From (\ref{pullback q by real can trans}), (\ref{pullback q_1}), (\ref{the pullback of q_2}), and (\ref{pullback q_2}), we deduce that the quadratic approximation $\mathfrak{q}$ to $\mathfrak{p}_0$ at $0 \in \C^{2n}$ is given by
\begin{align}
	\mathfrak{q}(z,\zeta) := M z \cdot \zeta, \ \ (z,\zeta) \in \C^{2n},
\end{align}
for the matrix
\begin{align} \label{big matrix}
	M =
	\begin{pmatrix}
		M_1 & 0 \\
		0 & M_2
	\end{pmatrix} \in \textrm{Mat}_{n \times n}(\C).
\end{align}
Let
\begin{align} \label{definition of kappa}
	\kappa = (\kappa_{\varphi_1} \times \kappa_{\varphi_2}) \circ \kappa^\C_{\Re} : \C^{2n} \rightarrow \C^{2n}.
\end{align}
We would like to introduce a metaplectic FBI transform $\mathcal{T}_\varphi$ on $\R^n$ whose underlying complex linear canonical transformation is $\kappa$. The existence of such an FBI transform is guaranteed by the following proposition.

\begin{proposition} \label{good FBI phase function proposition}

 There exists a unique FBI phase function $\varphi$ on $\C^{2n}$ whose associated complex linear canonical transformation $\kappa_\varphi$ is precisely $\kappa$,
\begin{align} \label{canonical transformation as desired}
	\kappa_{\varphi} = \kappa.
\end{align}

\end{proposition}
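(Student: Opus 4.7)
The strategy is to realize $\kappa$ as a complex linear canonical transformation of $\C^{2n}$ that maps $\R^{2n}$ onto $\Lambda_{\Phi_0}$ for a strictly plurisubharmonic quadratic form $\Phi_0$ on $\C^n$, and then to invoke the standard correspondence between FBI phase functions on $\C^{2n}$ and complex linear canonical transformations of this type.

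That $\kappa$ is a complex linear canonical transformation is immediate, since each of $\kappa_{\varphi_1}$, $\kappa_{\varphi_2}$ (by construction as FBI-generated transformations) and $\kappa^\C_{\Re}$ (as the complexification of the real symplectic map $\kappa_{\Re}$) preserves the respective complex symplectic form, and these forms match on $\C^{2n} \cong \C^{2n'} \times \C^{2n''}$ after identification. Next I compute $\kappa(\R^{2n})$: since $\kappa_{\Re}$ has real entries, $\kappa^\C_{\Re}(\R^{2n}) = \R^{2n}$, and therefore
\[
\kappa(\R^{2n}) = (\kappa_{\varphi_1} \times \kappa_{\varphi_2})(\R^{2n'} \times \R^{2n''}) = \Lambda_{\Phi^{(1)}_0} \times \Lambda_{\Phi^{(2)}_0}.
\]
Define $\Phi_0$ on $\C^n = \C^{n'} \times \C^{n''}$ by $\Phi_0(z) := \Phi^{(1)}_0(z') + \Phi^{(2)}_0(z'')$; this is a strictly plurisubharmonic quadratic form on $\C^n$, being the sum of two such forms in complementary variables. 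Since $\p_{z'} \Phi_0 = \p_{z'} \Phi^{(1)}_0$ and $\p_{z''} \Phi_0 = \p_{z''} \Phi^{(2)}_0$, a reordering of the coordinates of $\C^{2n}$ identifies $\Lambda_{\Phi^{(1)}_0} \times \Lambda_{\Phi^{(2)}_0}$ with $\Lambda_{\Phi_0}$, so that $\kappa(\R^{2n}) = \Lambda_{\Phi_0}$.

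The proposition then follows from the standard result (see, e.g., Section 1.3 of \cite{Minicourse} or Section 12.2 of \cite{Lectures_on_Resonances}) that any complex linear canonical transformation $\kappa: \C^{2n} \to \C^{2n}$ with $\kappa(\R^{2n}) = \Lambda_{\Phi_0}$ for some strictly plurisubharmonic quadratic form $\Phi_0$ on $\C^n$ is of the form $\kappa = \kappa_\varphi$ for a unique FBI phase function $\varphi$ on $\C^{2n}$. Writing $\kappa$ in block form $\begin{pmatrix} A & B \\ C & D \end{pmatrix}$ relative to the splitting $(y, \eta) \mapsto (z, \zeta)$, strict plurisubharmonicity of $\Phi_0$ forces $\det B \neq 0$, and then $\varphi(z, y)$ is determined by its Hessian blocks via $\varphi''_{zz} = D B^{-1}$, $\varphi''_{zy} = -B^{-T}$, and $\varphi''_{yy} = B^{-1} A$; the symplectic relations supply the required symmetries, and strict plurisubharmonicity of $\Phi_0$ translates into $\textrm{Im}\, \varphi''_{yy} > 0$. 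The main technical content lies precisely in this final step---extracting $\det B \neq 0$ from the geometric condition on $\kappa(\R^{2n})$ and verifying the positivity of $\textrm{Im}\, \varphi''_{yy}$---but this is standard in the FBI transform literature and can be carried out either directly or by citing the aforementioned references.
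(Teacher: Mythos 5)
Your proof is correct and follows essentially the same route as the paper's: define $\Phi_0(z) = \Phi^{(1)}_0(z') + \Phi^{(2)}_0(z'')$, verify $\kappa(\R^{2n}) = \Lambda_{\Phi_0}$ using that $\kappa^\C_{\Re}$ fixes $\R^{2n}$ and each $\kappa_{\varphi_j}$ maps the real domain onto the corresponding $\Lambda_{\Phi^{(j)}_0}$, and then appeal to the standard one-to-one correspondence between FBI phase functions and complex linear canonical transformations mapping $\R^{2n}$ onto an $I$-Lagrangian of the form $\Lambda_{\Phi_0}$. The paper cites pages 393--394 of \cite{WF_Multiple} for this last step while you cite \cite{Minicourse} and \cite{Lectures_on_Resonances} and additionally sketch the explicit block-matrix recovery of $\varphi''$ from $\kappa$, which is a helpful elaboration but does not change the underlying argument.
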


\begin{proof}
Let
\begin{align} \label{definition of the good weight}
	\Phi_0(z) := \Phi^{(1)}_0(z') + \Phi^{(2)}_0(z''), \ \ z = (z', z'') \in \C^n = \C^{n'} \times \C^{n''}.
\end{align}
Since $\Phi^{(1)}_0$ and $\Phi^{(2)}_0$ are strictly plurisubharmonic quadratic forms on $\C^{n'}$ and $\C^{n''}$, respectively, the function $\Phi_0$ is a strictly plurisubharmonic quadratic form on $\C^{n}$. Let
\begin{align} \label{IR subspace for the good weight}
	\Lambda_{\Phi_0} := \set{\left(z, \frac{2}{i} \p_z \Phi_0(z) \right)}{z \in \C^n}.
\end{align}
Thus
\begin{align} \label{decomposition Lambda phi}
	\Lambda_{\Phi_0} = \Lambda_{\Phi^{(1)}_0} \times \Lambda_{\Phi^{(2)}_0}.
\end{align}
From (\ref{first I lagrangian}), (\ref{image second i lagrangian}), and (\ref{decomposition Lambda phi}), we deduce that
\begin{align}
	(\kappa_{\varphi_1} \times \kappa_{\varphi_2})(\R^{2n}) = \Lambda_{\Phi_0}.
\end{align}
Since we also have
\begin{align}
\kappa^\C_{\Re}(\R^{2n}) = \R^{2n},
\end{align}
it follows that 
\begin{align}
	\kappa(\R^{2n}) = \Lambda_{\Phi_0}.
\end{align}
The existence and uniqueness of an FBI phase function $\varphi$ on $\C^{2n}$ such that (\ref{canonical transformation as desired}) holds now follows from well-known arguments. For the details, the reader may consult, for instance, the discussion on pages 393-394 of \cite{WF_Multiple}.
\end{proof}

Having shown that $\kappa$ is generated by an FBI phase function $\varphi$ on $\C^{2n}$, we may now study the evolution of the subspace $\Lambda_{\Phi_0}$ introduced in (\ref{IR subspace for the good weight}) by the complex-time Hamilton flow generated by $\mathfrak{p}_0 := p_0 \circ \kappa^{-1}$.

\begin{proposition} \label{good complex time proposition}
	Let $p_0$ be as in the statement of Theorem \ref{main_theorem}, let $\varphi$ be as in the statement of Proposition \ref{good FBI phase function proposition}, let $\Phi_0$ be the strictly plurisubharmonic weight associated to $\varphi$, let $\kappa_\varphi: \C^{2n} \rightarrow \C^{2n}$ be the complex linear canonical transformation generated by $\varphi$, let $\mathfrak{p}_0 = p_0 \circ \kappa_\varphi^{-1} \in \textrm{Hol}(\Lambda_{\Phi_0}+W)$, where $W$ is a sufficiently small open bounded neighborhood of $0$ in $\C^n$, and let $\kappa_t = \exp{(tH_{\mathfrak{p}_0})}$, $t \in \C$, be the complex-time Hamilton flow of $\mathfrak{p}_0$ in $\Lambda_{\Phi_0}+W$. Suppose that $0<T \ll 1$, $U = \textrm{neigh}(0; \C^n)$, and $\Phi \in C^\omega(D(0,T) \times U; \R)$ are such that
	\begin{align} \label{evolution of manifold in box}
		\kappa_t(\Lambda_{\Phi_0} \cap U \times U) \cap U \times U = \Lambda_{\Phi_t}, \ \ t \in D(0,T),
	\end{align}
	and
	\begin{align}
		\left. \Phi_t \right|_{t=0} = \Phi_0 \ \textrm{in} \ U,
	\end{align}
	where 
	\begin{align} 
		\Phi_t = \Phi(t,\cdot) \in C^\omega(U; \R), \ \ t \in D(0,T),
	\end{align}
 and
	\begin{align}
		\Lambda_{\Phi_t} = \set{\left(z, \frac{2}{i} \p_z \Phi_t(z) \right)}{z \in U}, \ \ t \in D(0,T).
	\end{align}
	Then, for every $0<T_0<T$, there exists $t_0 \in D(0,T_0) \backslash \{0\}$, $\delta>0$, and $c>0$ such that
	\begin{align}
		\Phi_0(z) - \Phi_{t_0}(z) \ge c \abs{z}^2, \ \ \abs{z} < \delta.
	\end{align}
\end{proposition}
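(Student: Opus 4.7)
The plan is to reduce the problem to the quadratic level and then exploit the normal form of $\mathfrak{q}$. Since $\mathfrak{p}_0$ agrees with its quadratic approximation $\mathfrak{q}(z,\zeta) = Mz\cdot\zeta$ to third order at the origin, the solution $\Phi_t$ of (\ref{complex_time_eikonal_equation}) will agree with its quadratic Taylor expansion $\Xi_t$ (the solution of the quadratic eikonal equation obtained by substituting $\mathfrak{q}$ for $\mathfrak{p}_0$) up to an $O(|z|^3)$ error, uniformly for $t$ in a compact subset of $D(0,T)$. It therefore suffices to find $t_0 \in D(0,T_0)\setminus\{0\}$ such that $\Phi_0(z) - \Xi_{t_0}(z) \geq 2c|z|^2$ on $\C^n$ for some $c > 0$; the cubic remainder can then be absorbed by taking $\delta > 0$ sufficiently small.

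To exhibit such a $t_0$, I will use the block decomposition $M = \mathrm{diag}(M_1, M_2)$ together with the splitting $\Phi_0(z) = \Phi_0^{(1)}(z') + \frac{|z''|^2}{2}$ to write $\Xi_t = \Xi_t^{(1)} + \Xi_t^{(2)}$. For the $z''$-block, because $M_2 = \mathrm{diag}(-2\epsilon\lambda_j)$ is real and diagonal, the quadratic eikonal equation is solvable in closed form:
\begin{equation*}
\Xi_t^{(2)}(z'') = \tfrac{1}{2}\sum_{j=1}^{n''} e^{4\epsilon\,\mathrm{Re}(t)\lambda_j}\,|z''_j|^2,
\end{equation*}
so that $\Phi_0^{(2)}(z'') - \Xi_t^{(2)}(z'')$ is positive definite in $z''$ precisely when $\epsilon\,\mathrm{Re}(t) < 0$. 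For the $z'$-block, the spectrum of $M_1$ lies in the open upper half-plane (since $\kappa_{\varphi_1}$ sends the strictly positive Lagrangian $\Lambda^+$ to $\{\zeta' = 0\}$), so for $t$ with a suitably chosen argument the Hamilton flow of $\mathfrak{q}_1$ drives $\Lambda_{\Phi_0^{(1)}}$ toward the stable direction and makes $\Phi_0^{(1)} - \Xi_t^{(1)}$ positive definite on $\C^{n'}$.

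The main technical obstacle will be to verify that the directional constraints on $t$ coming from the two blocks are simultaneously satisfiable, i.e., that there exists a common $\theta$ such that $t_0 = se^{i\theta}$ with $s > 0$ small makes both $\Phi_0^{(1)} - \Xi_{t_0}^{(1)}$ and $\Phi_0^{(2)} - \Xi_{t_0}^{(2)}$ positive definite. This compatibility is precisely where the trivial singular space hypothesis on $q_1$ enters: one analyzes the image of the complex-valued quadratic form $\Psi_1(z') := \mathfrak{q}_1(z', \tfrac{2}{i}\partial_{z'}\Phi_0^{(1)}(z'))$ on the unit sphere $|z'| = 1$, and uses triviality of the singular space of $q_1$ (together with $\mathrm{Re}\,\Psi_1 \geq 0$) to show that this image lies in a closed sector of $\C^*$ of angular width strictly less than $\pi$, whose orientation is controlled by the spectrum of $M_1$ and can be made compatible with the sign requirement $\epsilon\cos\theta < 0$ coming from the $z''$-block. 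Executing this sectorial analysis is the crux of the proof and draws on the normal-form structure of $\mathfrak{q}_1$ relative to $\Lambda_{\Phi_0^{(1)}}$.

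With such a $\theta$ in hand, the first-order estimate $\Phi_0(z) - \Xi_{se^{i\theta}}(z) \geq 2cs|z|^2$ will follow on $\C^n$ after absorbing the $O(s^2)|z|^2$ remainder by taking $s$ sufficiently small; combined with the $O(|z|^3)$ cubic error from the reduction to the quadratic approximation, this yields $\Phi_0(z) - \Phi_{t_0}(z) \geq c|z|^2$ on $\{|z| < \delta\}$ for $\delta$ small, completing the proof.
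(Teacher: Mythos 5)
Your reduction to the quadratic level, the splitting $\Xi_t = \Xi_t^{(1)} + \Xi_t^{(2)}$, and the explicit formula for $\Xi_t^{(2)}$ all match the paper. But your proposed treatment of the $z'$-block has a genuine gap, and it is the crux of the whole argument.

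You propose to take $t_0 = se^{i\theta}$ along a single fixed ray and to derive a first-order-in-$s$ lower bound $\Phi_0 - \Xi_{se^{i\theta}} \geq cs|z|^2$ from a ``sectorial'' analysis of $\Psi_1(z') := \mathfrak{q}_1(z', \tfrac{2}{i}\partial_{z'}\Phi_0^{(1)}(z'))$. To leading order in $s$, the $z'$-gain is $\Phi_0^{(1)}(z') - \Xi_{se^{i\theta}}^{(1)}(z') = -s\big(\cos\theta\,\operatorname{Im}\Psi_1(z') + \sin\theta\,\operatorname{Re}\Psi_1(z')\big) + \mathcal{O}(s^2)$, so a first-order bound requires the linear form $-\cos\theta\,\operatorname{Im}\Psi_1 - \sin\theta\,\operatorname{Re}\Psi_1$ to be bounded below by a positive constant on the unit sphere. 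This fails whenever $\Psi_1$ vanishes at some nonzero $z'$. And it can: under $\kappa_{\varphi_1}^{-1}$ the argument of $q_1$ sweeps out all of $\mathbb{R}^{2n'}$, and triviality of the singular space $S_1=\{0\}$ does \emph{not} force $q_1 \neq 0$ on $\mathbb{R}^{2n'}\setminus\{0\}$ --- the Kramers--Fokker--Planck quadratic form is the standard example with $S_1=\{0\}$ but many real zeros. So the range of $\Psi_1$ on $\{|z'|=1\}$ is not contained in $\mathbb{C}^{*}$ at all, and no rotation $e^{-i\theta}$ can put it in an open half-plane. The singular-space hypothesis buys hypoellipticity (a higher-order gain), not first-order positivity.

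This is exactly why the paper's proof does not use a single small parameter $s$. Instead it chooses $t_0 = -\epsilon\rho - is$ with two independent small parameters: the imaginary part $-is$ drives the $z'$-block, where the gain is controlled by the weight estimate from Section 2 of \cite{SubellipticEstimates},
\begin{align*}
\Phi_0^{(1)}(z') - \Xi^{(1)}_{-is}(z') \geq c\,s^{2k_0^{(1)}+1}\,|z'|^2, \qquad 0 \le s \ll 1,
\end{align*}
which is only of order $s^{2k_0^{(1)}+1}$ (degenerate in $s$ when $k_0^{(1)}>0$); the real part $-\epsilon\rho$ drives the $z''$-block, producing a gain of order $\rho$ there but an error of order $\rho$ in the $z'$-block. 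The proof then picks $\rho$ much smaller than $s^{2k_0^{(1)}+1}$, \emph{depending on $s$}, so the $\mathcal{O}(\rho|z'|^2)$ error is absorbed by the subelliptic gain. Your single-ray ansatz $t_0 = se^{i\theta}$ makes $\cos\theta$ and the radius $s$ move in lockstep and cannot realize this hierarchy; the $\mathcal{O}(s|\cos\theta|)$ error from the $z''$-favourable direction will always dominate $s^{2k_0^{(1)}+1}$ as $s\to 0$ when $k_0^{(1)}\geq 1$. Your argument would only be salvageable in the case $k_0^{(1)}=0$, i.e.\ when $\operatorname{Re} q_1$ is already positive definite, which is precisely the case the proposition is designed to go beyond.
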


\begin{proof}

By Proposition \ref{generating function proposition}, the function $\Phi$ satisfies the complex-time eikonal equation
		\begin{align} \label{local eikonal equation}
			\begin{split}
			\begin{cases}
				2 \p_t \Phi_t(z) + i \mathfrak{p}_0 \left(z, \frac{2}{i} \p_z \Phi_t(z) \right) = 0, \ \ (t,z) \in D(0,T) \times U, \\
				\left. \Phi_t \right|_{t=0} = \Phi_0 \ \textrm{in} \ U,
			\end{cases}
			\end{split}
		\end{align}
where $\p_t = \frac{1}{2} \left(\p_{\textrm{Re} \, t} - i \p_{\textrm{Im} \, t} \right)$. Since $p_0$ vanishes to second order at $0 \in \C^{2n}$ and $\kappa_{\varphi}(0) = 0$, the symbol $\mathfrak{p}_0$ must also vanish to second order at $0 \in \C^{2n}$. Thus the complex Hamilton vector field of $H_{\mathfrak{p}_0}$ of $\mathfrak{p}_0$ vanishes at $0 \in \C^{2n}$,
\begin{align}
	\left. H_{\mathfrak{p}_0} \right|_{0} = 0.
\end{align}
Therefore
\begin{align} \label{origin is fixed by complex family}
	\kappa_t(0) = 0, \ \ t \in D(0,T).
\end{align}
This observation, in conjunction with (\ref{evolution of manifold in box}), implies that
\begin{align}
	\p_z \Phi_t(0) = 0, \ \ t \in D(0,T).
\end{align}
Hence, when we take $z=0$ in (\ref{local eikonal equation}), we obtain that
\begin{align}
	\p_t \Phi_t(0) = 0, \ \ t \in D(0,T).
\end{align}
Because $\Phi_0$ is quadratic, we have $\Phi_0(0) = 0$, and so
\begin{align}
	\Phi_t(0) = 0, \ \ t \in D(0,T).
\end{align}
We conclude that
\begin{align}
	\Phi_t \ \textrm{vanishes to $2$nd order at $z=0$ for all $t \in D(0,T)$}.
\end{align}

Let $\Xi \in C^\omega(D(0,T) \times \C^n; \R)$ be the unique analytic function on $D(0,T) \times \C^n$ such that $\Xi_t:=\Xi(t,\cdot)$ is the quadratic approximation to $\Phi_t$ at $z = 0$ in $\C^n$ for each $t \in D(0,T)$, i.e. $\Xi_t$ is the unique real quadratic form on $\C^n$ such that
\begin{align} \label{quadratic approximation to the evolved weight}
	\Phi_t(z) = \Xi_t(z) + \mathcal{O}(\abs{z}^3), \ \ \abs{z} \rightarrow 0^+,
\end{align}
for each fixed $t \in D(0,T)$. Note that, by Proposition \ref{generating function proposition}, $\Phi_t$ is strictly plurisubharmonic in $U$ for each $t \in D(0,T)$, and hence $\Xi_t$ is a strictly plurisubharmonic quadratic form on $\C^n$ for each $t \in D(0,T)$, i.e.
\begin{align}
	\Xi''_{t,\overline{z} z} > 0, \ \ t \in D(0,T).
\end{align}
Taylor expanding (\ref{local eikonal equation}) to second order about the origin $z=0$ shows that $\Xi$ is the unique solution of the quadratic complex-time eikonal equation
\begin{align} \label{quadratic complex time eikonal equation}
\begin{split}
	\begin{cases}
		2 \p_t \Xi_t(z) + i \mathfrak{q} \left(z, \frac{2}{i} \p_z \Xi_t(z) \right) = 0, \ \ (t,z) \in D(0,T) \times \C^n, \\ 
		\left. \Xi_t \right|_{t=0} = \Phi_0 \ \textrm{on} \ \C^n,
	\end{cases}
\end{split}
\end{align}
where $\mathfrak{q}$ is the quadratic approximation to $\mathfrak{p}_0$ at $0 \in \C^{2n}$. Let us make the following splitting of coordinates in $\C^n$:
\begin{align}
	\C^{n} = \C^{n'} \times \C^{n''}, \ \ z = (z',z''),
\end{align}
where $0 \le n', n'' \le n$ are as in (\ref{splitting of real coordinates}). We search for a solution to (\ref{quadratic complex time eikonal equation}) of the form
\begin{align} \label{ansatz for quadratic eikonal equation}
	\Xi(t,z) = \Xi^{(1)}(t, z') + \Xi^{(2)}(t, z''), \ \ (t,z) \in D(0,T) \times \C^n,
\end{align}
where $\Xi^{(1)} \in C^\omega(D(0,T) \times \C^{n'}; \R)$, $\Xi^{(2)} \in C^\omega(D(0,T) \times \C^{n''}; \R)$, and $\Xi^{(1)}_t := \Xi^{(1)}(t, \cdot)$ and $\Xi^{(2)}_t := \Xi^{(2)}(t, \cdot)$ are strictly plurisubharmonic quadratic forms on $\C^{n'}$ and $\C^{n''}$ for each $t \in D(0,T)$, respectively. Since $\mathfrak{q}$ is of the form (\ref{good form of quadratic approx}), where the matrix $M$ is given by (\ref{big matrix}), and since the strictly plurisubharmonic weight $\Phi_0$ has the form (\ref{definition of the good weight}), we see that (\ref{ansatz for quadratic eikonal equation}) will be a solution of the problem (\ref{quadratic complex time eikonal equation}) provided $\Xi^{(1)}_t$ and $\Xi^{(2)}_t$ solve the eikonal equations
\begin{align} \label{first eikonal equation}
\begin{split}
\begin{cases}
	\p_t \Xi^{(1)}_t(z') + M_1 z' \cdot \p_{z'} \Xi^{(1)}_t(z') = 0, \ \ (t, z') \in D(0,T) \times \C^{n'}, \\
	\Xi^{(1)}_0 = \Phi^{(1)}_0 \ \textrm{in} \ \C^{n'},
\end{cases}
\end{split}
\end{align}
and
\begin{align} \label{second eikonal equation}
\begin{split}
\begin{cases}
	\p_t \Xi^{(2)}_t(z'') + M_2 z'' \cdot \p_{z''} \Xi^{(2)}_t(z'') = 0, \ \ (t, z'') \in D(0,T) \times \C^{n''}, \\
	\Xi^{(2)}_0 = \Phi^{(2)}_0 \ \textrm{in} \ \C^{n''},
\end{cases}
\end{split}
\end{align}
respectively. The problems (\ref{first eikonal equation}) and (\ref{second eikonal equation}) are globally well-posed in time $t \in \C$, and by inspection we see that their solutions are
\begin{align}
	\Xi^{(1)}_t(z') = \Phi^{(1)}_0 \left(e^{- t M_1} z' \right), \ \ (t,z') \in \C \times \C^{n'}
\end{align}
and
\begin{align}
	\Xi^{(2)}_t(z'') = \Phi^{(2)}_0(e^{- t M_2} z''), \ \ (t,z'') \in \C \times \C^{n''},
\end{align}
respectively. Thus the unique solution to (\ref{quadratic complex time eikonal equation}) is
\begin{align}
	\Xi_t(z) = \Phi^{(1)}_0 \left(e^{- t M_1} z' \right) + \Phi^{(2)}_0(e^{- t M_2} z''), \ \ (t,z) \in D(0,T) \times \C^n.
\end{align}
In view of (\ref{second psh weight}) and (\ref{pullback matrix}), we have
\begin{align} \label{form of evolved quadratic weight}
	\Xi_t(z) = \Phi^{(1)}_0 \left(e^{-t M_1} z' \right) + \frac{1}{2} \sum_{j=1}^{n''} e^{4 \textrm{Re}(t) \epsilon \lambda_j} \abs{z''_j}^2, \ \ (t,z) \in D(0,T) \times \C^n.
\end{align}

Let $0<T_0<T$ be arbitary. We claim that there exists $t_0 \in D(0,T_0) \backslash \{0\}$ such that
\begin{align}
	\Phi_0(z) - \Xi_{t_0}(z) \ge c \abs{z}^2, \ \ z \in \C^{n},
\end{align}
for some $c>0$. Indeed, let us search for such a complex-time $t_0$ of the form
\begin{align} \label{the good complex time}
	t_0 = -\epsilon \rho-i s
\end{align}
where $0< s, \rho \ll 1$. Taking $t = t_0$ in (\ref{form of evolved quadratic weight}) gives
\begin{align} \label{form of the quadratic weight at good time}
	\Xi_{t_0}(z) = \Phi^{(1)}_0 \left(e^{isM_1 + \epsilon \rho M_1} z' \right) + \frac{1}{2} \sum_{j=1}^{n''} e^{-4 \rho \lambda_j} \abs{z''_j}^2, \ \ z \in \C^n.
\end{align}
By Taylor expansion, we have
\begin{align} \label{Taylor expansion weight}
\begin{split}
	\Phi^{(1)}_0 \left(e^{isM_1-\epsilon \rho M_1} z' \right) &= \Xi^{(1)}_{-is} \left(e^{-\epsilon \rho M_1} z' \right) = \Xi^{(1)}_{-is}(z') + \mathcal{O}( \rho \abs{z'}^2), \ \ z' \in \C^{n'}, \ \ 0< s, \rho \ll 1.
\end{split}
\end{align}
From (\ref{first eikonal equation}) and (\ref{pullback q_1}), we see that the $s$-dependent quadratic form $\Xi_{-is}$ is the unique solution of the eikonal equation
\begin{align}
\begin{split}
\begin{cases}
	\p_s \Xi^{(1)}_{-i s}(z') + \textrm{Re} \, \mathfrak{q}_1 \left(z', \frac{2}{i} \p_{z'} \Xi_{-i s}(z') \right) = 0, \ \ (s, z') \in [0, \infty) \times \C^{n'}, \\
	\left. \Xi^{(1)}_{-is} \right|_{s=0} = \Phi^{(1)}_0 \ \textrm{on} \ \C^{n'}.
\end{cases}
\end{split}
\end{align}
Since the quadratic form $q_1$ has trivial singular space, $S_1 = \{0\}$, we know from the results of Section 2 of \cite{SubellipticEstimates} that there is $c>0$ such that
\begin{align} \label{quantitative lower bound for weight}
	\Phi^{(1)}_0(z') - \Xi^{(1)}_{-is}(z') \ge c s^{2 k^{(1)}_0 + 1} \abs{z'}^2, \ \ z' \in \C^{n'}, \ \ 0 \le s \ll 1,
\end{align}
where $k^{(1)}_0$ is the smallest non-negative integer such that
\begin{align}
	\bigcap_{j=0}^{k^{(1)}_0} \textrm{ker} \left[(\textrm{Re} \, F_1) (\textrm{Im} \, F_1)^j \right] \cap \R^{2n'} = \{0\}.
\end{align}
From (\ref{definition of the good weight}), (\ref{second psh weight}), (\ref{form of evolved quadratic weight}), (\ref{form of the quadratic weight at good time}), (\ref{Taylor expansion weight}), and (\ref{quantitative lower bound for weight}), we deduce that there is $c>0$ such that
\begin{align} \label{almost done bound}
	\Phi_0(z) - \Xi_{t_0}(z) \ge c s^{2 k^{(1)}_0 + 1} \abs{z'}^2 + \mathcal{O}(\rho \abs{z'}^2) + \frac{1}{2} \sum_{j=1}^{n''} \left(1 - e^{-4 \rho \lambda_j} \right) \abs{z''_j}^2, \ \ z \in \C^n,
\end{align}
whenever $0<s, \rho \ll 1$ are sufficiently small. Since $\lambda_j > 0$ for all $1 \le j \le n''$, for any $0<s \ll 1$ sufficiently small, we can choose $0< \rho \ll 1$ small enough so that the righthand side of (\ref{almost done bound}) is bounded below by $c \abs{z}^2$ for some $c>0$. Thus, for any $0 < s \ll 1$, there is $0<\rho \ll 1$ and $c>0$ such that
\begin{align} \label{lower bound on weights at good time}
	\Phi_0(z) - \Xi_{t_0}(z) \ge c \abs{z}^2, \ \ z \in \C^n,
\end{align}
for the non-zero complex time $t_0$ given in (\ref{the good complex time}). Taking $s$ and $\rho$ smaller if necessary, we may ensure that $\abs{t_0} < T_0$. From (\ref{quadratic approximation to the evolved weight}) and (\ref{lower bound on weights at good time}), we conclude that there is $c>0$ and $\delta>0$ such that
\begin{align}
	\Phi_{0}(z) - \Phi_{t_0}(z) \ge c \abs{z}^2, \ \ \abs{z} < \delta.
\end{align}

\end{proof}

\begin{remark} \label{tangent space evolution remark}
	An alternative derivation of the quadratic complex-time eikonal equation (\ref{quadratic complex time eikonal equation}) satisfied by $\Xi_t$ may be obtained by considering the tangent spaces $T_0 \Lambda_{\Phi_t}$ for $t \in D(0,T)$. Since $\kappa_t(0) = 0$ for all $t \in D(0,T)$, we have
	\begin{align} \label{image of tangent space}
		T_0 \Lambda_{\Phi_t} = d_0 \kappa_t \left(T_0 \Lambda_{\Phi_0} \right), \ \ t \in D(0,T),
	\end{align}
	where $d_0 \kappa_t$ denotes the differential of $\kappa_t$ at $0 \in \C^{2n}$. Because $\Phi_t$ vanishes to second order at $0 \in \C^n$ for every $t \in D(0,T)$, we have a canonical identification
	\begin{align} \label{canonical identification of tangent space}
		T_0 \Lambda_{\Phi_t} \cong \Lambda_{\Xi_t} , \ \ t \in D(0,T),
	\end{align}
	where $\Lambda_{\Xi_t}$ is the $I$-Lagrangian, $R$-symplectic, subspace of $\C^{2n}$ given by
	\begin{align}
		\Lambda_{\Xi_t} = \set{\left(z, \frac{2}{i} \p_z \Xi_t(z) \right)}{z \in \C^n}, \ \ t \in D(0,T).
	\end{align}
	Moreover, since $\mathfrak{p_0}$ vanishes to second order at $0 \in \C^{2n}$, we have
	\begin{align} \label{identification of differential}
		d_0 \kappa_t = \exp{(tH_{\mathfrak{q}})}, \ \ t \in D(0,T),
	\end{align}
	when we view $d_0 \kappa_t$ as a $\C$-linear transformation $\C^{2n} \rightarrow \C^{2n}$. From (\ref{image of tangent space}), (\ref{canonical identification of tangent space}), and (\ref{identification of differential}), it follows that the family of real quadratic forms $(\Xi_t)_{t \in D(0,T)}$ satisfies
	\begin{align}
	\begin{cases}
		\Lambda_{\Xi_t} = \exp{(tH_{\mathfrak{q}})}(\Lambda_{\Phi_0}), \ \ t \in D(0,T), \\
		\left. \Xi_t \right|_{t=0} = \Phi_0.
	\end{cases}
	\end{align}
	Reasoning similarly to the proof of Proposition \ref{generating function proposition}, we find that $\Xi_t$ solves the quadratic complex-time eikonal equation (\ref{quadratic complex time eikonal equation}).
\end{remark}

We now conclude the proof of Theorem \ref{main_theorem} following the argument sketched in the introduction to this paper. Let $P = \textrm{Op}^w_h(p_0 + h p_1)$ and $u = u(h) \in L^2(\R^n)$ be as in the statement of Theorem \ref{main_theorem}, and let $\varphi$, $\Phi_0$, $U$, $0<T \ll 1$, and $\Phi$ be as in the statement of Proposition \ref{good complex time proposition}. Let $\mathcal{T}_\varphi$ be the FBI transform on $\R^n$ associated to the FBI phase function $\varphi$. By Proposition \ref{locally in evolved spaces}, there are constants $\delta>0$, $0<T_0<T$, $0<C<\infty$, and $0<h_0 \le 1$ such that
\begin{align} \label{bound in the better spaces}
	\sup_{\substack{t \in D(0,T_0) \\ 0<h \le h_0}} \norm{\mathcal{T}_\varphi u}_{L^2_{\Phi_t}(\{\abs{z} < \delta \})} \le C,
\end{align}
where $\norm{\cdot}_{{L^2_{\Phi_t}(\{\abs{z} < \delta \})}}$ is the norm
\begin{align}
	\norm{v}^2_{{L^2_{\Phi_t}(\{\abs{z} < \delta \})}} = \int_{\abs{z}<\delta} \abs{v(z)}^2 e^{-2 \Phi_t(z) /h} \, L(dz).
\end{align}
After taking $\delta>0$ smaller if necessary, we get from Proposition \ref{good complex time proposition} that there is $t_0 \in D(0,T_0) \backslash \{0\}$ and $c>0$ such that
\begin{align} \label{good lower bound for the weights in the end}
	\Phi_0(z) - \Phi^*(z) \ge c \abs{z}^2, \ \ \abs{z} < \delta,
\end{align}
where
\begin{align}
	\Phi^* := \Phi_{t_0}.
\end{align}
Let $N$ be a positive integer that is strictly larger than $n/4$. After taking $0< h_0 \le 1$ smaller if necessary, we obtain from Proposition \ref{first_bound_Lp_norm} that for any $1 \le p \le \infty$ there is $C>0$ such that
\begin{align} \label{second to last inequality}
	\norm{u}_{L^p(\R^n)} \le C h^{\frac{n}{2p}-\frac{3n}{4}} \int_{\abs{z} < \delta} \abs{\mathcal{T}_\varphi u(z)} e^{-\Phi_0(z)/h} \, L(dz) + Ch^N, \ \ 0< h \le h_0.
\end{align}
Thanks to (\ref{bound in the better spaces}), we have
\begin{align} \label{bounded at the good time}
	\sup_{0< h \le h_0} \norm{\mathcal{T}_\varphi u}_{L^2_{\Phi_{t_0}}(\{\abs{z}<\delta \})} < C
\end{align}
for some $0< C < \infty$. From (\ref{good lower bound for the weights in the end}) and (\ref{bounded at the good time}), we may conclude that there is $C>0$ such that
\begin{align} \label{last inequality}
	\int_{\abs{z}<\delta} \abs{\mathcal{T}_\varphi u(z)} e^{-\Phi_0(z)/h} \, L(dz) \le \int_{\abs{z}<\delta} \abs{\mathcal{T}_\varphi u(z)} e^{- \Phi^*(z)/h} e^{-c \abs{z}^2/h} \, L(dz) \le C h^{\frac{n}{2}},
\end{align}
where the second inequality follows from an application of Cauchy-Schwarz and the fact that
\begin{align}
	\left(\int_{\abs{z}<\delta} e^{-c \abs{z}^2 / h} \, L(dz) \right)^{1/2} = \mathcal{O} (h^{\frac{n}{2}}),
\end{align}
which may be deduced from a direct calculation. Putting (\ref{second to last inequality}) and (\ref{last inequality}) together, we find that there is $0< h_0 \le 1$ such that for any $1 \le p \le \infty$ we have
\begin{align}
	\norm{u}_{L^p(\R^n)} \le \mathcal{O}(1) h^{\frac{n}{2p} - \frac{n}{4}} + \mathcal{O}(1) h^N = \mathcal{O}(1)h^{\frac{n}{2p} - \frac{n}{4}}, \ \ 0< h \le h_0.
\end{align}
The proof of Theorem \ref{main_theorem} is complete.


\bibliographystyle{plain}
\bibliography{references}

\begin{thebibliography}{10}

\bibitem{SpectralAnalysis}
Y.~Almog and R.~Henry.
\newblock Spectral analysis of a complex {S}chr\"{o}dinger operator in the semiclassical limit.
\newblock {\em SIAM J. Math. Anal.}, 48(4):2962--2993, 2016.

\bibitem{PartialGS}
{P.} Alphonse.
\newblock Quadratic {D}ifferential {E}quations: Partial {G}elfand–{S}hilov {S}moothing {E}ffect and {N}ull-{C}ontrollability.
\newblock {\em Journal of the Institute of Mathematics of Jussieu}, page 1–53, 2020.

\bibitem{alphonse2020polar}
{P.} {Alphonse} and {J. Bernier}.
\newblock {Polar {D}ecomposition of {S}emigroups {G}enerated by {N}on-{S}elfadjoint {Q}uadratic {D}ifferential {O}perators and {R}egularizing {E}ffects}.
\newblock page arXiv:1909.03662, September 2019.

\bibitem{Hitrik_Bellis}
B.~Bellis and M.~Hitrik.
\newblock Semigroup expansions for non-selfadjoint {S}chr\"{o}dinger operators.
\newblock {\em J. Funct. Anal.}, 277(10):3586--3598, 2019.

\bibitem{BoutetHypo}
L.~Boutet~de Monvel.
\newblock Hypoelliptic operators with double characteristics and related pseudo-differential operators.
\newblock {\em Comm. Pure Appl. Math.}, 27:585--639, 1974.

\bibitem{ExponentialSingularities}
{E}. Carypis and {P}. {W}ahlberg.
\newblock Propagation of {E}xponential {P}hase {S}pace {S}ingularities for {S}chr{\"o}dinger {E}quations with {Q}uadratic {H}amiltonians.
\newblock {\em Journal of Fourier Analysis and Applications}, 23(3):530--571, 2017.

\bibitem{ComplexFIOs}
{L}. {C}oburn, {M}. {H}itrik, and {J}. {S}j{\"o}strand.
\newblock Positivity, {C}omplex {F}{I}{O}s, and {T}oeplitz {O}perators.
\newblock {\em Pure Appl. Anal.}, 1(3):327--357, 2019.

\bibitem{Davies}
E.~B. Davies.
\newblock Non-self-adjoint differential operators.
\newblock {\em Bull. London Math. Soc.}, 34(5):513--532, 2002.

\bibitem{dimassi_sjostrand}
M.~Dimassi and J.~Sjostrand.
\newblock {\em Spectral Asymptotics in the Semi-Classical Limit}.
\newblock London Mathematical Society Lecture Note Series. Cambridge University Press, 1999.

\bibitem{Hager_Sjostrand}
M.~Hager and J.~Sj\"{o}strand.
\newblock Eigenvalue asymptotics for randomly perturbed non-selfadjoint operators.
\newblock {\em Math. Ann.}, 342(1):177--243, 2008.

\bibitem{WitLap}
B.~Helffer and F.~Nier.
\newblock {\em Hypoelliptic estimates and spectral theory for {F}okker-{P}lanck operators and {W}itten {L}aplacians}, volume 1862 of {\em Lecture Notes in Mathematics}.
\newblock Springer-Verlag, Berlin, 2005.

\bibitem{Stolk_Herau_Sjostrand}
F.~H\'{e}rau, J.~Sj\"{o}strand, and C.~C. Stolk.
\newblock Semiclassical analysis for the {K}ramers-{F}okker-{P}lanck equation.
\newblock {\em Comm. Partial Differential Equations}, 30(4-6):689--760, 2005.

\bibitem{QuadraticOperators}
{M}. Hitrik and {K}. {P}ravda{-}{S}tarov.
\newblock Spectra and {S}emigroup {S}moothing for {N}on-{E}lliptic {Q}uadratic {O}perators.
\newblock {\em Mathematische Annalen}, 344(4):801--846, Jan 2009.

\bibitem{SemiclassicalHypoelliptic}
M.~Hitrik and K.~Pravda-Starov.
\newblock Semiclassical {H}ypoelliptic {E}stimates for {N}on-{S}elfadjoint {O}perators with {D}ouble {C}haracteristics.
\newblock {\em Comm. Partial Differential Equations}, 35(6):988--1028, 2010.

\bibitem{EigenvaluesAndSubelliptic}
M.~Hitrik and K.~Pravda-Starov.
\newblock Eigenvalues and {S}ubelliptic {E}stimates for {N}on-{S}elfadjoint {S}emiclassical {O}perators with {D}ouble {C}haracteristics.
\newblock {\em Ann. Inst. Fourier (Grenoble)}, 63(3):985--1032, 2013.

\bibitem{SubellipticEstimates}
{M}. Hitrik, {K}. {P}ravda{-}{S}tarov, and {J}. {V}iola.
\newblock From {S}emigroups to {S}ubelliptic {E}stimates for {Q}uadratic {O}perators.
\newblock {\em Transactions of the American Mathematical Society}, 370(10):7391--7415, May 2018.

\bibitem{Minicourse}
{M}. Hitrik and {J}. {S}j{\"o}strand.
\newblock Two {M}inicourses on {A}nalytic {M}icrolocal {A}nalysis.
\newblock {\em Springer Proceedings in Mathematics \& Statistics}, pages 483--540, 2018.

\bibitem{resolvent_esimates_elliptic}
M.~Hitrik, J.~Sj\"{o}strand, and J.~Viola.
\newblock Resolvent estimates for elliptic quadratic differential operators.
\newblock {\em Anal. PDE}, 6(1):181--196, 2013.

\bibitem{HormanderIII}
L.~H\"{o}rmander.
\newblock {\em The analysis of linear partial differential operators. {III}}.
\newblock Classics in Mathematics. Springer, Berlin, 2007.
\newblock Pseudo-differential operators, Reprint of the 1994 edition.

\bibitem{Koch_Tataru}
H.~Koch and D.~Tataru.
\newblock {$L^p$} eigenfunction bounds for the {H}ermite operator.
\newblock {\em Duke Math. J.}, 128(2):369--392, 2005.

\bibitem{Koch_Tataru_Zworski}
H.~Koch, D.~Tataru, and M.~Zworski.
\newblock Semiclassical {$L^p$} estimates.
\newblock {\em Ann. Henri Poincar\'{e}}, 8(5):885--916, 2007.

\bibitem{Eigenfunction_Bounds}
K.~Krupchyk and G.~Uhlmann.
\newblock Bounds on eigenfunctions of semiclassical operators with double characteristics.
\newblock {\em Asymptot. Anal.}, 106(1):25--46, 2018.

\bibitem{Martinez}
A.~Martinez.
\newblock {\em An {I}ntroduction to {S}emiclassical and {M}icrolocal {A}nalysis}.
\newblock Universitext. Springer-Verlag, New York, 2002.

\bibitem{CplxPhaseFtn}
A.~Melin and J.~Sj\"{o}strand.
\newblock Fourier integral operators with complex-valued phase functions.
\newblock In {\em Fourier integral operators and partial differential equations ({C}olloq. {I}nternat., {U}niv. {N}ice, {N}ice, 1974)}, pages 120--223. Lecture Notes in Math., Vol. 459. 1975.

\bibitem{ContractionSemigroup}
K.~Pravda-Starov.
\newblock Contraction {S}emigroups of {E}lliptic {Q}uadratic {D}ifferential {O}perators.
\newblock {\em Math. Z.}, 259(2):363--391, 2008.

\bibitem{SubellipticEstimatesQuadraticDifferentialOperators}
K.~Pravda-Starov.
\newblock Subelliptic {E}stimates for {Q}uadratic {D}ifferential {O}perators.
\newblock {\em Amer. J. Math.}, 133(1):39--89, 2011.

\bibitem{time_dependent}
K.~Pravda-Starov.
\newblock Generalized {M}ehler {F}ormula for {T}ime-{D}ependent {N}on-{S}elfadjoint {Q}uadratic {O}perators and {P}ropagation of {S}ingularities.
\newblock {\em Mathematische Annalen}, 372(3):1335--1382, 2018.

\bibitem{GaborSingularities}
K.~Pravda-Starov, L.~Rodino, and P.~Wahlberg.
\newblock Propagation of {G}abor {S}ingularities for {S}chr{\"o}dinger {E}quations with {Q}uadratic {H}amiltonians.
\newblock {\em Mathematische Nachrichten}, 291(1):128--159, 2018.

\bibitem{ParaMultiChar}
J.~Sj\"{o}strand.
\newblock Parametrices for pseudodifferential operators with multiple characteristics.
\newblock {\em Ark. Mat.}, 12:85--130, 1974.

\bibitem{AppFIO}
J.~Sj{\"o}strand.
\newblock Applications of {F}ourier distributions with complex phase functions.
\newblock In Jacques Chazarain, editor, {\em Fourier Integral Operators and Partial Differential Equations}, pages 255--282, Berlin, Heidelberg, 1975. Springer Berlin Heidelberg.

\bibitem{AnalyticMicrolocal_Analysis}
J.~Sj\"{o}strand.
\newblock Singularit\'{e}s {A}nalytiques {M}icrolocales.
\newblock In {\em Ast\'{e}risque, 95}, volume~95 of {\em Ast\'{e}risque}, pages 1--166. Soc. Math. France, Paris, 1982.

\bibitem{WF_Multiple}
J.~Sj\"{o}strand.
\newblock Analytic wavefront sets and operators with multiple characteristics.
\newblock {\em Hokkaido Math. J.}, 12(3, part 2):392--433, 1983.

\bibitem{Sjostrand_Geometric}
J.~Sj\"{o}strand.
\newblock Geometric {B}ounds on the {D}ensity of {R}esonances for {S}emiclassical {P}roblems.
\newblock {\em Duke Math. J.}, 60(1):1--57, 1990.

\bibitem{GlobalILagrangians}
J.~Sj\"{o}strand.
\newblock Function {S}paces {A}ssociated to {G}lobal {$I$}-{L}agrangian {M}anifolds.
\newblock In {\em Structure of solutions of differential equations ({K}atata/{K}yoto, 1995)}, pages 369--423. World Sci. Publ., River Edge, NJ, 1996.

\bibitem{Lectures_on_Resonances}
J.~Sj\"{o}strand.
\newblock {\em Lectures on {R}esonances}.
\newblock version preliminaire, printemps, 2002.

\bibitem{NonEllipticQuadraticFormsandSemiclassicalEstimates}
J.~Viola.
\newblock Non-{E}lliptic {Q}uadratic {F}orms and {S}emiclassical {E}stimates for {N}on-{S}elfadjoint {O}perators.
\newblock {\em Int. Math. Res. Not. IMRN}, (20):4615--4671, 2013.

\bibitem{SpectralProjectionsAndResolventBounds}
J.~Viola.
\newblock Spectral {P}rojections and {R}esolvent {B}ounds for {P}artially {E}lliptic {Q}uadratic {D}ifferential {O}perators.
\newblock {\em J. Pseudo-Differ. Oper. Appl.}, 4(2):145--221, 2013.

\bibitem{PolynomialSingularities}
P.~Wahlberg.
\newblock Propagation of {P}olynomial {P}hase {S}pace {S}ingularities for {S}chr{\"o}dinger {E}quations with {Q}uadratic {H}amiltonians.
\newblock {\em Mathematica Scandinavica}, 122(1):107--140, Feb. 2018.

\bibitem{Lp_bounds}
F.~{White}.
\newblock {$L^p$-bounds for semigroups generated by non-elliptic quadratic differential operators}.
\newblock {\em arXiv e-prints}, page arXiv:2104.14613, April 2021.

\bibitem{Global_Analytic}
F.~{White}.
\newblock {Propagation of Global Analytic Singularities for Schr{\"o}dinger Equations with Quadratic Hamiltonians}.
\newblock {\em arXiv e-prints}, page arXiv:2102.01474, February 2021.

\bibitem{SemiclassicalAnalysis}
M.~Zworski.
\newblock {\em Semiclassical {A}nalysis}, volume 138 of {\em Graduate Studies in Mathematics}.
\newblock American Mathematical Society, Providence, RI, 2012.

\end{thebibliography}

\end{document}